\documentclass[12pt,nosumlimits,nonamelimits]{amsart}
\usepackage{amssymb,mathtools,accents}
\allowdisplaybreaks

\newtheorem{thmA}{Theorem}

\newtheorem{corA}[thmA]{Corollary}
\newtheorem{propA}[thmA]{Proposition}
\newtheorem{thm}{Theorem}
\newtheorem{prop}[thm]{Proposition}
\newtheorem{cor}[thm]{Corollary}
\newtheorem{lem}[thm]{Lemma}
\theoremstyle{remark}
\newtheorem{rem}[thm]{Remark}
\theoremstyle{definition}
\newtheorem{defn}[thm]{Definition}
\newtheorem*{conj}{Conjecture}
\newcommand{\IB}{\mathbb{B}}
\newcommand{\IC}{\mathbb{C}}
\newcommand{\IH}{\mathbb{H}}

\newcommand{\IM}{\mathbb{M}}
\newcommand{\IN}{\mathbb{N}}
\newcommand{\IQ}{\mathbb{Q}}
\newcommand{\IR}{\mathbb{R}}

\newcommand{\IZ}{\mathbb{Z}}

\newcommand{\bF}{\mathbf{F}}

\newcommand{\fS}{\mathfrak{S}}

\newcommand{\rf}{\mathrm{f}}
\newcommand{\rC}{\mathrm{C}}

\newcommand{\rr}{\mathrm{r}}
\newcommand{\rE}{\mathrm{E}}

\newcommand{\cC}{\mathcal{C}}

\newcommand{\cF}{\mathcal{F}}
\newcommand{\cG}{\mathcal{G}}
\newcommand{\cH}{\mathcal{H}}

\newcommand{\cM}{\mathcal{M}}

\newcommand{\cR}{\mathcal{R}}
\newcommand{\cS}{\mathcal{S}}

\newcommand{\cU}{\mathcal{U}}
\newcommand{\cV}{\mathcal{V}}
\newcommand{\cZ}{\mathcal{Z}}

\newcommand{\ve}{\varepsilon}
\newcommand{\vp}{\varphi}
\newcommand{\Ga}{\Gamma}
\newcommand{\La}{\Lambda}
\newcommand{\acts}{\curvearrowright}
\newcommand{\id}{\mathrm{id}}

\newcommand{\UE}{\mathcal{UE}}
\newcommand{\fin}{\mathrm{fin}}
\DeclareMathOperator{\Aut}{Aut}
\DeclareMathOperator{\Ad}{Ad}
\DeclareMathOperator{\cb}{cb}
\DeclareMathOperator{\dom}{dom}
\DeclareMathOperator{\Prob}{Prob}
\DeclareMathOperator{\Lim}{Lim}

\DeclareMathOperator{\rank}{rank}
\DeclareMathOperator{\supp}{supp}
\DeclareMathOperator{\GL}{GL}
\DeclareMathOperator{\SL}{SL}
\DeclareMathOperator{\dist}{d}
\DeclareMathOperator{\conv}{conv}
\newcommand{\lspan}{\mathop{\mathrm{span}}}
\newcommand{\ip}[1]{\mathopen{\langle}#1\mathclose{\rangle}}
\def\bigast{\font\bigsymbolsfont=cmsy10 scaled \magstep3
 \setbox0=\hbox{\bigsymbolsfont\char'003 }\mathord{\lower1pt\box0}}\relax\ignorespaces
\newcommand{\cst}{$\rC^*$-alge\-bra}
\title{Uniform amenability at infinity}
\author{Narutaka Ozawa}
\address{RIMS, Kyoto University, \mbox{606-8502} Japan}
\email{narutaka@kurims.kyoto-u.ac.jp}
\thanks{The author was partially supported by JSPS KAKENHI Grant Numbers 24K00527, 25H00588, 25H00593}
\subjclass{20F65; 22D25, 46L05}

\keywords{uniform exactness, limit groups, $\La$-trees}
\date{\today}

\begin{document}
\begin{abstract}
We introduce the notion of uniform exactness, or 
uniform amenability at infinity, for discrete groups and 
prove it for a wide class of groups containing free groups 
and their limit groups. 
This shows a novel strong convergence phenomenon 
that any convergent sequence of such groups 
in the space of marked groups converges strongly 
in the operator algebraic sense. In particular, 
convergence of the spectral radius formula 
is uniform over probability measures on such groups 
whose supports have a fixed cardinality.
\end{abstract}
\maketitle
\section{Introduction}
Amenability is one of the most important notions in analytic group theory. 
However, many groups, in particular those groups that contain non-cyclic free groups, 
are not amenable. 
It is a remarkable fact that many non-amenable groups still enjoy 
the weaker yet very useful property, 
\emph{exactness} or a.k.a.\ \emph{amenability at infinity} 
(\cite{anantharaman-delaroche, kw}). 
A group $\Ga$ is exact if the $\Ga$-action 
on the Stone--\v{C}ech compactification $\beta\Ga$, or equivalently 
on the \cst{} $\ell_\infty\Ga$ of bounded functions on $\Ga$, 
is amenable. 
Here the action $\Ga\acts\ell_\infty\Ga$ is given by 
$(sf)(x) \coloneq f(s^{-1}x)$ for $s,x\in\Ga$ and $f\in\ell_\infty\Ga$. 
For example, the class of exact groups contains 
hyperbolic groups (\cite{adams}) 
and linear groups (\cite{ghw}), 
but not all groups are exact (\cite{ad,gromov,osajda}). 
Both amenability and exactness are inherited by subgroups. 
See \cite{adr, bo} for a comprehensive treatment on 
this topic. 

A group $\Ga$ is said to be \emph{uniformly amenable} 
(\cite{bozejko, keller, ks, wysoczanski}) if 
its ultrapower $\Ga^\cU$ is amenable. 
Here and throughout the paper, we fix 
a free ultrafilter $\cU$ on $\IN$. 
All of the arguments do not depend on the choice of a free ultrafilter $\cU$.
The ultrapower $\Ga^\cU$ is defined to be $\prod_{\IN}\Ga/{\sim}$, 
where $(s_n)_n\sim(t_n)_n$ if $\{ n : s_n=t_n \} \in \cU$. 
The element in the ultrapower that arises from 
a sequence $(x_n)_n$ etc will be written by $[x_n]_n$.
For example, solvable groups are uniformly amenable 
since being solvable of derived length less than a fixed number is 
maintained by ultraproduct. On the other hand, 
the amenable group $\fS_\infty$ of finite permutations 
on $\IN$ is not since it contains a non-cyclic free group 
in the ultrapower.
It is tempting to introduce the notion of uniform exactness 
similarly by exactness of the ultrapower, 
but this appears too weak for meaningful applications. 
Instead, we define a group $\Ga$ to be \emph{uniformly exact} 
(or \emph{uniformly amenable at infinity}) if 
the $\Ga^\cU$-action on $(\ell_\infty\Ga)^\cU$ 
is amenable.
This notion will have applications 
to a strong convergence phenomenon 
and a uniform estimate of the spectral radius formula.  
Recall that for a \cst{} $A$, its ultrapower is defined to be 
\[
A^\cU \coloneq \{ (a_n)_n \in\ell_\infty(\IN,A) \}/\{ (a_n)_n : \Lim_\cU \|a_n\|=0\}.
\]
Here $\Lim_{\cU}$ is the character on $\ell_\infty\IN$ associated 
with the free ultrafilter $\cU$. 
We view $(\ell_\infty\Ga)^\cU$ 
as the $\mathrm{C}^*$-sub\-alge\-bra of ``internal'' functions 
in $\ell_\infty(\Ga^\cU)$ 
via the $\Ga^\cU$-equivariant isometric embedding given by 
$[f_n]_n([x_n]_n) \coloneq \Lim_\cU f_n(x_n)$. 
Since the condition for amenability gets more restrictive  
as the relevant \cst{} gets smaller, 
uniform exactness of $\Ga$ implies exactness of its 
ultrapower $\Ga^\cU$. 
It is known that $\Ga^\cU$ is exact 
for torsion-free hyperbolic groups (\cite{km2,sela:plms})
and linear groups (\cite{ghw}). 
Besides uniformly amenable groups, these 
are the main source (and the only source as far as 
the author is aware) of groups that have exact ultrapowers.
\begin{conj}
Torsion-free hyperbolic groups and linear groups are uniformly exact. 
\end{conj}
We solve this conjecture for the case of free groups $\bF$, 
by viewing the ultrapower $\bF^\cU$ as a $\IZ^\cU$-tree 
(see \cite{chiswell}) and studying the ``definable'' functions 
on $\bF^\cU$ through the tree compactification. 
\begin{thmA}\label{thmA:freegroup}
Free groups are uniformly exact. 
\end{thmA}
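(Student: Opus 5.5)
The approach is to transplant the classical proof that $\bF$ is exact, which uses averages of points along geodesics toward the identity, to the ultrapower. There $\bF^\cU$ is a $\IZ^\cU$-tree, and everything must be done with \emph{internal} data. The bounds in the classical argument depend only on $|s|$ and the averaging length, and not on the rank or on $x$. The price for working internally is that elements $s=[s_n]_n$ of $\bF^\cU$ can have unbounded length $[|s_n|]_n\in\IZ^\cU$.

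As the working criterion I will use the standard characterization of amenability of a discrete group $G$ acting on a unital commutative \cst{} $A\subset\ell_\infty(X)$. It asks for nets of maps $x\mapsto\mu_x\in\Prob(G)$ such that $x\mapsto\mu_x(g)$ lies in $A$ for every $g$, and
\[
\sup_{x}\|s\mu_x-\mu_{sx}\|_1\to0
\]
for each $s\in G$. The version with finitely supported $\zeta\colon G\to A_+$ must be recovered by the usual square-root and cut-down argument. Here $G=\bF^\cU$, $X=\bF^\cU$ and $A=(\ell_\infty\bF)^\cU$. I have not checked that this reduction goes through at this level of generality; that is what the fourth paragraph addresses.

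Fix a finite set $F\subset\bF^\cU$ and $m\in\IN$. Let $L=[L_n]_n\in\IZ^\cU$ with $L\ge|s|$ for all $s\in F$, and take $L_n\ge1$. For $x=[x_n]_n$, write $p_\ell(x)$ for the prefix of $x$ of length $\ell$, with the convention $p_\ell(x)=e$ for $\ell\le0$. Define $\mu_x$ levelwise and internally by the following steps.
\begin{enumerate}
\item Write $|x_n|=q_nL_n+r_n$ with $0\le r_n<L_n$.
\item Put tent weights on the grid points $p_{(q_n-j)L_n}(x_n)$ for $j=0,\dots,2m$. The weight profile is a piecewise-linear function of $j+r_n/L_n$ with plateau of height $1/m$ and slopes $1/m^2$.
\item Take the resulting ultralimit measure on $\bF^\cU$.
\end{enumerate}
Each $x\mapsto\mu_x(g)$ is the class of the sequence of functions $x_n\mapsto\mu^{(n)}_{x_n}(g_n)$. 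Hence it belongs to $(\ell_\infty\bF)^\cU$, because every ingredient (lengths, prefixes, division by $L_n$) is definable levelwise.

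The key estimate is $\|s\mu_x-\mu_{sx}\|_1\le C/m$ for all $|s|\le L$. The tree geometry gives the following. The geodesics $[sx,e]$ and $s[x,e]=[sx,s]$ coincide beyond the branch point. Consequently there is a $\delta$ with $|\delta|\le|s|\le L$ such that $p_\ell(sx)=s\,p_{\ell-\delta}(x)$ for all $\ell\ge|s|$. Shifting the continuous parameter $j+r/L$ by $\delta/L\in[-1,1]$ changes the tent profile by $O(1/m)$ in $\ell_1$. The grid points with $\ell<|s|$ carry mass $O(1/m)$ only when $q\ge 2m+1$. If instead $|x|\lesssim mL$, both measures sit mostly on $e$ and on $s$, with a matching error $O(1/m)$. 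All constants are absolute, and this is exactly where uniformity over $n$ is used.

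The main obstacle I expect is reconciling the criterion with the support condition. The measures $\mu_x$ are supported on prefixes of $x$, so the functions $\zeta(g)(x)=\mu_x(xg)^{1/2}$ are not supported on a fixed finite subset of $\bF^\cU$. The relative positions $x^{-1}p_{\ell}(x)$ are inverses of suffixes of arbitrary length. What I would try first is to replace the absolute prefixes by relative ones, $x\cdot(\text{suffix}^{-1})$. The map $g\mapsto\zeta(g)$ then ranges over a hyperfinite but internal set, and I would need to show that amenability of $\bF^\cU\acts(\ell_\infty\bF)^\cU$ can be tested with such internally finitely supported families. This would be a \L{}o\'s-type argument: the positive-type kernel $\sum_g\zeta(g)\,s(\zeta(s^{-1}g))$ is internal and close to $1$. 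I would then pass to a genuinely finitely supported $\zeta$ by approximating this kernel with one of uniformly bounded propagation, using the nuclearity-type characterization for the reduced crossed product. If this step fails, the fallback is to make the support finite by construction: use only $2m+1$ grid points, read relative to a germ fixed by the $\IZ^\cU$-tree compactification.
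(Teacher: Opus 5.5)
\textbf{There is a genuine gap.} Your key estimate fails as written, and the support problem you defer in your last paragraph is the entire content of the theorem, not a technicality.

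\textbf{The estimate fails, even for short $s$.} Your grid points sit at lengths in $L_n\IZ$ measured from $e$.
\begin{itemize}
\item $\mu_{sx}$ is carried by $p_{kL}(sx)=s\,p_{kL-\delta}(x)$, while $s\mu_x$ is carried by $s\,p_{kL}(x)$. These atom sets are disjoint unless $L$ divides $\delta$.
\item Concretely, take $s=a$ and $x=[b^{N_n}]_n$ with $N_n>(2m+2)L_n$ and $L_n\ge 2$. Then $\delta=1$, $\mu_{ax}$ lives on $\{ab^{kL_n-1}\}$ and $a\mu_x$ lives on $\{ab^{kL_n}\}$. Atoms at distinct points of $\bF^\cU$ do not cancel, so $\|a\mu_x-\mu_{ax}\|_1=2$.
\item The same happens at the other end. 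Your convention puts the overflow mass of $s\mu_x$ at $s$ but that of $\mu_{sx}$ at $e$.
\item Shifting the continuous parameter $j+r/L$ by $\delta/L$ is harmless only for a profile that is smooth at scale $L$. Such a profile needs $\gtrsim L_n$ atoms, so for infinite $L$ its ultralimit gives mass $0$ to every point of $\bF^\cU$.
\item If you use right multiplication $x\mapsto xs$ instead, your grids do agree on the common segment toward $e$ and the estimate becomes correct. That is the classical Property~A argument at scale $L$, and it leads straight to the real obstruction.
\end{itemize}

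\textbf{Your criterion is vacuous as stated.} The family $\mu_x=\delta_x$ satisfies it.
\begin{itemize}
\item $x\mapsto\delta_x(g)$ is the internal function $[1_{\{g_n\}}]_n$, and $s\delta_x=\delta_{sx}$. The same works for every group, $\fS_\infty$ included, which is not uniformly exact.
\item With your normalization $\zeta(g)(x)=\mu_x(xg)^{1/2}$, this family is even supported at $g=e$. The correct normalization is $\zeta(g)(x)=\mu_x(g)^{1/2}$.
\item The correct normalization requires $\bigcup_x\supp\mu_x$ to lie in one finite subset of $\bF^\cU$. This is the norm continuity in Definition~\ref{def:topoamen}, which coordinatewise membership in $(\ell_\infty\bF)^\cU$ does not give.
\item By Theorem~\ref{thm;unfiexmod}, this means supports of cardinality bounded by $|F|$ and $\ve$ alone, independently of word lengths.
\end{itemize}
Translate the right-multiplication version of your family to the left action by $\eta^x\coloneq x\,\mu_{x^{-1}}$. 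The union of supports is then the ball of radius about $(2m+1)L$ around $e$. Its size grows exponentially in $L_n$, and it is infinite in $\bF^\cU$. Having only $2m+1$ atoms per $\mu_x$ is irrelevant, since Dirac measures have one. Your proposed repair is circular: nuclearity of $(\ell_\infty\bF)^\cU\rtimes_\rr\bF^\cU$ is equivalent to the amenability you want to prove.

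\textbf{The missing idea is how to get finite supports at all.} The paper proceeds as follows.
\begin{itemize}
\item It picks $x$ with $\ip{\delta_\cS(x)}_{\conv}$ minimal (Lemma~\ref{lem:center}) and base-changes at that scale to an $\IR$-tree $T$.
\item It proves Borel, not topological, amenability of $\hat T\setminus B$, using the length measure and $\cM$-amenability of the orbit relation (Theorem~\ref{thm:Rtreeequivrel}).
\item It obtains continuous, finitely supported means non-constructively from the Anantharaman-Delaroche--Renault theorem (Theorem~\ref{thm:topbob}, Proposition~\ref{prop:amencriterion}).
\item It handles branch points by induction on the $\La$-rank, which is finite by tameness (Corollary~\ref{cor:tame}).
\end{itemize}
This also uses group-theoretic input your proposal never touches. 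Finitely generated subgroups of $\bF^\cU$ are limit groups. This supplies conditions (1) and (2) of Theorem~\ref{thmA:LaTree}: countably many amenable subgroups and amenable rough arc stabilizers.
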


Let $\lambda$ denote the left regular representation 
of $\Ga$ on $\ell_2\Ga$. 
The \emph{reduced group \cst{}} 
$\rC^*_\rr\Ga$ of $\Ga$ is the norm closure of 
$\lambda(\IC\Ga)$ in the \cst{} $\IB(\ell_2\Ga)$ 
of all bounded linear operators on $\ell_2\Ga$. 
It is well-known that exactness has $\mathrm{C}^*$-alge\-braic 
characterization (\cite{kw}, Theorem 5.1.7 in \cite{bo}). 
Namely, a group $\Ga$ is exact if and only if 
$\rC^*_\rr\Ga$ is \emph{exact} as a \cst{}. 
Note that the ultrapower \cst{} 
$(\rC^*_\rr\Ga)^\cU$ is exact if and only if 
it is \emph{subhomogeneous} if and only if 
$\Ga$ is virtually abelian. 
This shows a stark contrast between 
exactness of $\rC^*_\rr(\Ga^\cU)$ and that of $(\rC^*_\rr\Ga)^\cU$.

\begin{thmA}\label{thmA:unifamen}
A group $\Ga$ is uniformly exact if and only if 
the canonical embedding 
\[
A^\cU \otimes \rC^*_\rr(\Ga^\cU)
 \hookrightarrow (A \otimes \rC^*_\rr\Ga)^\cU,\quad 
 [a_n]_n\otimes\lambda([t_n]_n)\mapsto [a_n\otimes\lambda(t_n)]_n
\]
is continuous and isometric for every \cst{} $A$. 
Here the tensor product $\otimes$ is 
the minimal (a.k.a.\ spatial) tensor product of \cst{}s. 
\end{thmA}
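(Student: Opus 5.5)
We reduce Theorem~\ref{thmA:unifamen} to the standard characterization of amenable actions on unital \cst{}s by their reduced crossed products. Write $\La=\Ga^\cU$ and $B=(\ell_\infty\Ga)^\cU$. The first step is to identify $(A\otimes\rC^*_\rr\Ga)^\cU$ with a natural \cst{} attached to the action $\La\acts B$. For each $n$, $\ell_\infty\Ga\rtimes_\rr\Ga$ acts on $\ell_2\Ga$ and contains $\rC^*_\rr\Ga$. It also contains the commutant copy of $\rho(\Ga)$, and $\ell_\infty\Ga\rtimes_\rr\Ga\cong\IB(\ell_2\Ga)\cap\rho(\Ga)'$-type uniform Roe algebra $\ell_\infty\Ga\rtimes_\rr\Ga$. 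Taking ultrapowers, we get a canonical $*$-homomorphism
\[
B\rtimes_{\mathrm{alg}}\La\to(\ell_\infty\Ga\rtimes_\rr\Ga)^\cU,\quad [f_n]_n\lambda([t_n]_n)\mapsto[f_n\lambda(t_n)]_n .
\]
Using the conditional expectation onto $\ell_\infty\Ga$, which passes to ultrapowers, together with the fact that the canonical expectation of $B\rtimes\La$ onto $B$ is compatible, this map extends to an isometric embedding of $B\rtimes_\rr\La$. Faithfulness of the expectation on the reduced crossed product gives injectivity. The same argument with $A$ inserted yields an isometric embedding $(A^\cU\otimes B)\rtimes_\rr\La\hookrightarrow((A\otimes\ell_\infty\Ga)\rtimes_\rr\Ga)^\cU$. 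The Fourier coefficients computed at $\Lim_\cU$ determine the element.

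\textbf{($\Rightarrow$)} Suppose $\La\acts B$ is amenable. Amenability lets us approximate the identity of $B\rtimes_\rr\La$ in point-norm by finitely supported positive type functions $\La\to B$ (Schur multipliers of norm at most $1$). Each such function is represented by $[h_n]_n$ with $h_n:\Ga\to\ell_\infty\Ga$. By \L{}o\'s-type reasoning (the ultralimit of the norms), the $h_n$ give completely positive contractive multipliers on $A\otimes\ell_\infty\Ga\rtimes_\rr\Ga\supset A\otimes\rC^*_\rr\Ga$ that factor through $A\otimes\ell_\infty\Ga\rtimes_{\mathrm{alg}}$-finite pieces. Applying them levelwise to an element $x=[x_n]_n$ in the image of $A^\cU\otimes_{\mathrm{alg}}\rC^*_\rr(\La)$, we obtain $\|x\|_{(A\otimes\rC^*_\rr\Ga)^\cU}\ge\|\Phi(x)\|$. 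Here $\Phi(x)$ lies in a finite-degree part where the minimal and ultrapower norms agree. Those norms agree because $A^\cU\otimes B$ is injectively embedded in $(A\otimes\ell_\infty\Ga)^\cU$, which holds since $B$ is commutative and hence nuclear. Letting $\Phi\to\id$ gives $\|x\|_{\min}\le\|x\|_{\mathrm{ultra}}$. The reverse inequality is automatic, since the min norm is the smallest $\rC^*$-norm once continuity is known.

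\textbf{($\Leftarrow$)} This is the step we expect to be the main obstacle. We follow the proof that $\rC^*_\rr\Ga$ exact implies $\Ga$ exact. Take $A=\prod_n\IB(\ell_2\Ga)$, or simply $A=\ell_\infty(\IN,\IB(\ell_2\Ga))$. Then $(A\otimes\rC^*_\rr\Ga)^\cU$ contains a copy of $(\ell_\infty\Ga\rtimes_\rr\Ga)^\cU$ via the Fell absorption map $f\lambda(t)\mapsto f\rho(t)\otimes\lambda(t)$, after conjugating by the standard unitary. The hypothesis says the min-norm on $A^\cU\otimes_{\mathrm{alg}}\rC^*_\rr\La$ agrees with this ultrapower norm. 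Restricting to $B\rtimes_{\mathrm{alg}}\La$ then shows that the full norm (through $A^\cU\otimes\rC^*_\rr\La$, into which $B\rtimes\La$ maps by $b\lambda(s)\mapsto b\rho(s)\otimes\lambda(s)$, a map that is always contractive) agrees with the reduced norm. More precisely, it yields a $*$-homomorphism $B\rtimes_{\max}\La$ or $A^\cU\otimes_{\max}$ factoring through min. This produces a conditional-expectation-like ucp map $(B\rtimes_\rr\La)\to A^\cU\otimes\rC^*_\rr\La$ that is nuclear-relative in the sense of Brown--Ozawa Theorem~4.4.3/5.1.7: the tensor product with $\rC^*_\rr\La$ along the diagonal $\lambda(s)\mapsto\lambda(s)\otimes\lambda(s)$ is min-continuous. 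We then invoke the criterion that $\La\acts B$ is amenable iff $\id$ on $B\rtimes_\rr\La$ is approximated by the coaction-type maps. Equivalently, we use the criterion that the diagonal map $B\rtimes_\rr\La\to (B\rtimes_\rr\La)\otimes\rC^*_\rr\La$ together with positive definite functions yields functions $\La\to B$ of finite support approximating $1$.

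The delicate point is converting an isometric-norm statement into approximating positive type $B$-valued functions with finite support. We plan to do this by a Hahn--Banach separation as in the proof of Theorem~5.1.7 of \cite{bo}, applied to states on $A^\cU\otimes\rC^*_\rr\La$ restricted from the ultrapower, whose values on $B\lambda(s)$ produce the required functions.
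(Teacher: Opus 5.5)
\textbf{There is a genuine gap in each part of your proposal: the opening identification is false, ($\Rightarrow$) proves only the automatic inequality, and ($\Leftarrow$) lacks the uniformity mechanism.}

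\textbf{The opening identification and ($\Rightarrow$).} The map $B\rtimes_{\mathrm{alg}}\La\to(\ell_\infty\Ga\rtimes_\rr\Ga)^\cU$ need not even be bounded for the reduced norm. On $\IC\La$ it is the map of the theorem with $A=\IC$. Take the amenable group $\Ga=\fS_\infty$, free elements $a,b\in\Ga^\cU$, and $f=\delta_a+\delta_{a^{-1}}+\delta_b+\delta_{b^{-1}}$. Then $\|\lambda(f)\|_{\rC^*_\rr\La}=2\sqrt3$, while $\Lim_\cU\|\lambda(f_n)\|_{\rC^*_\rr\Ga}=4$.

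Your expectation argument only shows that $B\rtimes_\rr\La$ is a quotient of the $\rC^*$-algebra generated by the image, i.e.\ $\|x\|_\rr\le\|x\|_{\mathrm{ultra}}$. Equality would need $[E_n]_n$ to be faithful, and an ultrapower of faithful expectations is not. The same argument with $[\id_A\otimes\tau]_n$ gives $\|x\|_{\min}\le\|x\|_{\mathrm{ultra}}$ for \emph{every} group. That is exactly the inequality your Schur-multiplier chain in ($\Rightarrow$) proves.

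The inequality with content is continuity, $\|x\|_{\mathrm{ultra}}\le\|x\|_{\min}$; it is precisely what fails for $\fS_\infty$. Minimality of the min-norm cannot give it, since minimality points the other way. The paper obtains it as follows: $(\cH\otimes\ell_2\Ga)^\cU$ carries a covariant representation of $(A^\cU\otimes(\ell_\infty\Ga)^\cU,\Ga^\cU)$. That representation is bounded by the full crossed-product norm, which equals the reduced norm by amenability (Brown--Ozawa, Theorem 4.4.3).

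Your factorization $m_h=\psi\circ\phi$ would also work if run in the other direction. One has $\|[\psi_n\phi_n(x_n)]_n\|\le\|\phi(x)\|\le\|x\|_{\min}$. Moreover $\Lim_\cU\|x_n-\psi_n\phi_n(x_n)\|$ is small, because $[h_n(s_n)]_n=h(s)\approx1$ in $B$.

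\textbf{($\Leftarrow$): the missing uniformity.} Correctly done (with $f\lambda(t)\mapsto f\lambda(t)\otimes\lambda(t)$, not $f\rho(t)\otimes\lambda(t)$), your Fell-absorption step gives only this: the ultrapower norm equals the reduced norm on $B\rtimes_{\mathrm{alg}}\La$. That is one particular representation factoring through $B\rtimes_\rr\La$, not ``full $=$ reduced''. More importantly, passing from a norm identity to finitely supported positive-type functions $\La\to B=(\ell_\infty\Ga)^\cU$ needs a finite-dimensional approximation whose size is uniform in $n$. Hahn--Banach on states of $A^\cU\otimes\rC^*_\rr\La$ produces neither $B$-valued functions nor bounded propagation.

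The paper supplies this uniformity by showing that (4) forces a modulus of uniform exactness, and a modulus trivially gives amenability of $\Ga^\cU\acts(\ell_\infty\Ga)^\cU$. The argument has two steps.
\begin{enumerate}
\item The converse half of Lemma~\ref{lem:osex} turns an estimate $\|(\Phi_E|_{\cF})^{-1}\|_{\cb}\le(1-\ve)^{-1}$ into $\eta$ supported in a controlled power of $E$. Its ingredients are Arveson extension after perturbing to a u.c.p.\ map, a positive definite kernel of propagation $E^2$, and a polynomial approximation of $r\mapsto r^{1/2}$ keeping propagation inside $E^{2k}$. So if no modulus exists, there are $F_n$ with $|F_n|=d$ and $\|(\Phi_{E_n}|_{\cF_n})^{-1}\|_{\cb}>1+\ve$ for $E_n=F_n^n$.
\item The ultraproduct compression $\Phi_\cU$ is nevertheless automatically a complete isometry. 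Indeed, $\|S\|=\lim_k(\mathrm{tr}\otimes\tau)((S^*S)^k)^{1/2k}$ in $\IM_m\otimes\rC^*_\rr(\Ga^\cU)$ only sees compressions to $F_\cU^k$, i.e.\ to $F_n^k\subset F_n^n$. Operator-space duality gives $\|(\Phi_{E_n}|_{\cF_n})^{-1}\|_{\cb}=\|u_n\|_{\cG_n^*\otimes\cF_n}$. Applying the hypothesis to a $\rC^*$-algebra $A$ containing all the $\cG_n^*$ then yields $\Lim_\cU\|u_n\|=\|u\|=1$, a contradiction.
\end{enumerate}
None of these ingredients appears in your plan: compressions to $F^k$, the trace formula in $\rC^*_\rr(\Ga^\cU)$, and the choice of $A$ encoding the dual spaces. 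So ($\Leftarrow$) is not established.
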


A group $\Ga$ is said to be 
\emph{mixed-identity-free} or MIF (\cite{ho}) 
if there is a nontrivial element in $\Ga^\cU$ 
that is free from $\Ga$, or equivalently 
if there is an embedding 
$\Ga\ast\IZ \hookrightarrow \Ga^\cU$ 
that extends the diagonal embedding of $\Ga$. 
It has been proved in \cite{agkp, selfless, robert} 
that \emph{some} of these 
embeddings for some of the MIF groups 
are extended to the 
\cst{} embeddings 
$\rC^*_\rr(\Ga\ast\IZ) \hookrightarrow (\rC^*_\rr\Ga)^\cU$, 
proving that such $\rC^*_\rr\Ga$ are selfless. 
Theorem~\ref{thmA:unifamen} states that 
\emph{all} of these embeddings are extended, 
provided that $\Ga$ is uniformly exact. 
In particular, a uniformly exact group is 
\emph{completely $\mathrm{C}^*$-self\-less} 
(\cite{agkp, selfless, robert})
if it is MIF.

Recall that a \emph{marked group} of rank $d$ 
is a group $\Ga$ together with 
an epimorphism $\vp$ 
from the rank $d$ free group $\bF_d$ 
(with the distinguished generating set) onto $\Ga$. 
A marked group (or more precisely, its 
marked isomorphism class) 
is in the one-to-one correspondence to 
a normal subgroup $\ker\vp$ in $\bF_d$. 
The space $\cM_d$ of marked groups of rank $d$ 
is a compact topological space as a closed subset of $2^{\bF_d}$. 
The space $\cM$ of marked groups is the disjoint 
union $\bigsqcup_d\cM_d$. 
A \emph{limit group} (\cite{km,sela}) 
is a group that appears as the limit in $\cM$ 
of marked groups that are isomorphic to free groups. 
We say that a subgroup-closed class $\cC$ of groups 
is \emph{compact} if the image of $\cC$ 
in $\cM$ (under all possible markings) is closed. 
The class of limit groups is compact by definition. 
Every uniformly exact group has a function $k\colon\IN\to\IN$ as 
a \emph{modulus of uniform exactness} (see Section~\ref{sec:mod}). 
The class $\UE$ of uniformly exact groups 
is the union of uniformly exact groups $\UE(k)$ having 
$k$ as a modulus of uniform exactness.

\begin{corA}\label{corA:list}
The class $\UE = \bigcup_{k\in\IN^\IN} \UE(k)$ satisfies the following. 
\begin{enumerate}
\item
The class $\UE$ contains free groups and more generally 
torsion-free hyperbolic groups having tame geometry at infinity 
(see Section~\ref{sec:tame}).
\item 
An amenable group is uniformly exact if and only if 
it is uniformly amenable. 
Moreover, for every $k\in\IN^\IN$, the class of amenable 
groups in $\UE(k)$ is compact. 
In particular, $\fS_\infty$ is not uniformly exact. 
\item
For every $k\in\IN^\IN$, 
the class $\UE(k)$ is compact and is closed under 
taking subgroups, directed unions, 
and quotients by normal amenable subgroups. 
In particular, limit groups are uniformly exact with the same 
modulus of uniform exactness as free groups.
\item\label{cond:extension}
The class $\UE$ is closed under extensions, free products, graph products over finite graphs, and finite-index super\-groups.
More precisely, for every $k\in\IN^\IN$, there is $k'\in\IN^\IN$ 
such that extensions and free products 
of two groups from $\UE(k)$ are contained in $\UE(k')$. 
\end{enumerate}
\end{corA}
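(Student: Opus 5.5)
The plan is to first give a finitary description of uniform exactness through a modulus, and then read every item of Corollary~\ref{corA:list} off that description together with Theorem~\ref{thmA:freegroup}.

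\textbf{Finitary reformulation.} Amenability of $\Ga^\cU\acts(\ell_\infty\Ga)^\cU$ means the following. For every finite $E\subset\Ga^\cU$ and $\ve>0$ there is a finitely supported positive-type kernel, or equivalently a map $\xi\colon\Ga^\cU\to\ell_2(\Ga^\cU)$, which is internal and satisfies $\|\xi_x-\xi_{sx}\|<\ve$ for $s\in E$. Unwinding the ultrapower by \L o\'s-type arguments gives a uniform statement for $\Ga$ itself. For each $n$ there is $k(n)$ such that for every $E\subset\Ga$ with $|E|\le n$ there is $\xi\colon\Ga\to\ell_2\Ga$ with $\|\xi_x\|=1$, $|\supp\xi_x|\le k(n)$, and $\|\xi_{sx}-\xi_x\|<1/n$ for $s\in E$. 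This is the modulus of Section~\ref{sec:mod}. The point is that $k(n)$ depends only on the cardinality of $E$ and not on where $E$ sits in $\Ga$. Internality is exactly what forces the support bound to be uniform. I expect this equivalence to be the main obstacle, because extracting a uniform bound needs a compactness argument in the ultrapower. Suppose no $k$ works for some $n$. Choosing counterexamples $E_m$ for $k=m$ and passing to the ultrapower should then contradict amenability.

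\textbf{Items (1) and (3).} For (1), free groups are covered by Theorem~\ref{thmA:freegroup}. The tame hyperbolic case uses the same tree-compactification argument, carried out in Section~\ref{sec:tame}. For (3), the modulus condition is a first-order-like statement about finitely many elements: it concerns a ball of radius controlled by the supports, since $\xi_x$ can be taken supported near $x$ by composing with a Markov map. Hence it passes to limits in $\cM$, and $\UE(k)$ is compact. Closure under subgroups follows by restricting $\xi$ and projecting along cosets. Directed unions are immediate. For quotients by normal amenable subgroups $N$, I average $\xi$ over F\o lner sets of $N$, using uniform amenability of $N$, which is a compact-class consequence, and push forward to $\Ga/N$. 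Since limit groups are limits of free groups in $\cM$, compactness puts them in $\UE(k_{\bF})$.

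\textbf{Item (2).} If $\Ga$ is amenable and uniformly exact, then $\Ga^\cU$ acts amenably on an internal algebra. The internal amenable structure of $\Ga$ yields a $\Ga^\cU$-invariant mean on $(\ell_\infty\Ga)^\cU$, coming from ultralimits of F\o lner means. Combining this invariant mean with the amenable action gives amenability of $\Ga^\cU$. The converse is trivial, since an amenable $\Ga^\cU$ acts amenably on any algebra. Compactness of the amenable part of $\UE(k)$ follows because uniform amenability is itself witnessed by a F\o lner modulus, which is closed in $\cM$. Then $\fS_\infty$ is excluded because its ultrapower contains $\bF_2$.

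\textbf{Item (4).} For an extension $N\to\Ga\to Q$, I compose the maps in the standard way (\cite{bo}, Section 5.1). I take $\eta$ for $Q$ and pull it back, and for each fiber take $\zeta$ for $N$ with respect to the finitely many translates $E'$ determined by $E$ and coset representatives. Tensoring gives supports of size at most $k_Q(n)\,k_N(n')$, where $n'$ depends only on $n$ and $k_Q(n)$. Free products and graph products are handled through the Bass--Serre tree: the tree of the free product is $\IZ^\cU$-tree-like after ultrapowers, and the argument of Theorem~\ref{thmA:freegroup} applied to the tree, combined with the vertex stabilizers in $\UE(k)$, gives a new modulus $k'$. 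Graph products then follow by induction on the number of vertices, writing each as an amalgam over a special subgroup. Finite-index supergroups follow by induction, since such a supergroup is an extension with finite quotient.
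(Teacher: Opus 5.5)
\textbf{Main gap: free products and graph products in (4).} Your Bass--Serre route cannot use Theorem~\ref{thmA:LaTree} as stated.
\begin{itemize}
\item In the graph-product step $\Ga_{\mathcal G\setminus v}*_{\Ga_{\mathrm{lk}(v)}}(\Ga_{\mathrm{lk}(v)}\times\Ga_v)$, the edge group is in general non-amenable. So even honest arc stabilizers violate condition~(\ref{cond:LaTreeAS}); the paper itself says closure of $\UE$ under amalgams is unclear.
\item For $A*B$, condition~(\ref{cond:LaTreeCA}) fails as soon as $A$ contains, e.g., $\IZ/2\wr\IZ$: any finitely generated subgroup of $(A*B)^\cU$ containing its diagonal copy has uncountably many amenable subgroups.
\item You give no argument for condition~(\ref{cond:LaTreeFR}); the tameness in Section~\ref{sec:tame} comes from linearity.
\item A qualitative statement about one ultrapower does not by itself give a $k'$ depending only on $k$.
\end{itemize}
The missing idea is that everything reduces to extensions. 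The kernel of $\Ga*\IZ\to\Ga$ is free, so $\Ga*\IZ$ is an extension of a free group by $\Ga$, and it contains $\bigast_\IN\Ga$. Hence $A*B\le\bigast_\IN(A\times B)\le(A\times B)*\IZ$ is handled by the extension case twice plus subgroup closure. Likewise, $\Ga*_\Delta(\Delta\times\Ga')$ is an extension of the normal closure $\bigast\Ga'$ of $\Ga'$ by $\Ga$, which is the inductive step for graph products. For finite-index supergroups, pass to the normal core first. Your extension argument itself is the paper's, provided the section satisfies $\varsigma(\bar F^l)\subset F^l$, so that the cocycle values lie in the set $F^{2(k+1)}\cap N$ of controlled size.

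\textbf{The finitary reformulation, as written, is vacuous.} The choice $\xi_x=\delta_1$ for all $x$ (or $\xi_x=\delta_x$ if you meant $\xi_{sx}\approx s\xi_x$) satisfies it in every group, including $\fS_\infty$. What must be controlled is \emph{where} the supports lie: $\bigcup_x\supp\eta^x\subset F^{k(|F|)}$, as in Definition~\ref{defn:unifex}. This is exactly what compactness of $\UE(k)$ uses: all $\eta_n$ lift to $\Prob(F^{k(|F|)})$-valued maps on $\bF_d$ for one fixed finite set, and Tychonoff applies. With only a bound on $|\bigcup_x\supp\eta^x|$, the supports may escape to infinity and the limit argument in $\cM$ breaks down. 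Getting supports into powers of $F$ requires first retracting onto $\ip{F}$ by a coset decomposition, before running the ultrapower contrapositive you sketch.

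\textbf{Smaller uniformity issues in (2) and (3).}
\begin{itemize}
\item Your ultrapower proof of the equivalence in (2) is fine: for an invariant mean $m$ on $\ell_\infty\Ga$, the state $[f_n]_n\mapsto\Lim_\cU m(f_n)$ is $\Ga^\cU$-invariant. But compactness of the amenable part of $\UE(k)$ needs a F\o lner modulus depending only on $k$, which your argument does not provide. The paper gets it by integrating $\eta^x$ against an invariant mean, which yields $(F,|F|^{-1})$-invariant means supported on $F^{k(|F|)}$.
\item For quotients by a normal amenable $N$, averaging over finite F\o lner sets fails. The elements $\varsigma(\bar s\bar x)^{-1}s\varsigma(\bar x)\in N$ are unbounded as $\bar x$ varies, so no single finite set is approximately invariant under all of them. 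Average against a left-invariant mean on $N$ instead; this keeps values in $\Prob(\bar F^{k(|F|)})$ and so preserves $k$.
\end{itemize}
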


Exactness is closed under free products over 
arbitrary amalgamation and under HNN extensions 
(\cite{dykema}, see also Theorem 5.2.7 in \cite{bo}). 
The same for uniform exactness is much unclear. 

The following proposition strengthens the result of \cite{lm} 
that every limit group is a strong limit of \emph{some} 
sequence of free groups. 
See \cite{magee, vanhandel} for recent surveys on 
strong convergence phenomena. 

\begin{propA}\label{propA:norm}
Fix a modulus $k\in\IN^\IN$ of uniform exactness. 
Then for every $\Ga\in\UE(k)$, $f\in\IC\Ga$, and $\ve>0$ 
with $F\coloneq \supp f$ and 
$E \coloneq (F\cup\{1\}\cup F^{-1})^{k(|F|+\lfloor2/\ve\rfloor)}$, 
one has 
\[
\| \lambda(f) \|_{\rC^*_\rr\Ga}
 \le (1+\ve) \| P_{\ell_2E} \lambda(f) |_{\ell_2 E} \|_{\IB(\ell_2E)}. 
\]
In particular, every convergent sequence 
$(\vp_n\colon \bF_d\to\Ga_n)_n$ in $\cM_d$ 
with $\Ga_n\in\UE(k)$ converges to 
the limit $\vp_\infty\colon\bF_d\to\Ga_\infty$ 
strongly in the sense that 
\[
\forall f\in\IC\bF_d\quad
\lim_n \|\lambda(\vp_n(f))\|_{\rC^*_\rr\Ga_n}
 = \|\lambda(\vp_\infty(f))\|_{\rC^*_\rr\Ga_\infty}. 
\]
Also, convergence of the spectral radius formula
\[
\|\lambda(f) \|_{\rC^*_\rr\Ga} = \lim_n \sqrt[2n]{(f^**f)^{*n}(1)}.
\]
is uniform over $\Ga \in \UE(k)$ and over $f \in\IC\Ga$ 
with uniform bounds on $|\supp f|$ and $\|f\|_1$. 
\end{propA}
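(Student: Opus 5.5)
We do not yet know the precise definition of the modulus $k$ (Section~\ref{sec:mod}). We assume it takes the following finitary form, obtained from amenability of $\Ga^\cU\acts(\ell_\infty\Ga)^\cU$ by a standard ultraproduct/compactness argument. For every finite $F\subset\Ga$ with $|F|\le m$ and every $\delta\sim 1/m$, there is a map $\xi\colon\Ga\to\ell_2\Ga$ with the following properties:
\begin{itemize}
\item $\|\xi_x\|=1$;
\item $\supp\xi_x\subset x(F\cup\{1\}\cup F^{-1})^{k(m)}$;
\item $\xi_x\ge 0$;
\item $\|\xi_{sx}-\lambda(s)\xi_x\|<\delta$ for $s\in F$ and all $x$.
\end{itemize}
Here "uniform" means that the support radius $k(m)$ is independent of $\Ga$. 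Hence $\xi$ is a uniformly finitely supported, positive-type, approximately invariant kernel. Our first step is to extract exactly this statement from the definition, with $m=|F|+\lfloor 2/\ve\rfloor$. The extra $\lfloor 2/\ve\rfloor$ is meant to control $\delta$.

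The norm estimate then follows the standard argument used to show that exactness yields the local norm approximation. Define the isometry
\[
V\colon\ell_2\Ga\to\ell_2\Ga\otimes\ell_2\Ga,\qquad V\delta_x=\delta_x\otimes\xi_x.
\]
Set $E_0=(F\cup\{1\}\cup F^{-1})^{k(m)}$. Then $V^*(\lambda(f)\otimes 1)V$ is the Schur multiplier of $\lambda(f)$ with kernel $\ip{\xi_x,\xi_y}$. Alternatively, put $\tilde V\delta_x=\delta_x\otimes\lambda(x)^{-1}\xi_x$, so that $\eta_x:=\lambda(x^{-1})\xi_x$ is supported in $E_0$. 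Conjugating by the unitary $\delta_x\otimes\delta_y\mapsto\delta_x\otimes\delta_{xy}$ turns $\lambda(f)\otimes 1$ into $\lambda(f)\otimes\lambda(f)$-type operators. The cleaner route is to decompose over cosets instead. We write
\[
\ell_2\Ga=\bigoplus\nolimits_{z}\ell_2(zE)\ \text{(overlapping)},
\]
using the vectors $\xi$ as a partition of unity. Concretely, for $v\in\ell_2\Ga$ we have
\[
\|\lambda(f)v\|^2\approx\sum_z\|P_{zE_0}\lambda(f)(\,\xi_\cdot(z)v)\|^2
\]
up to an error of order $\delta\|f\|_1$, where $(\xi_\cdot(z)v)(x):=\xi_x(z)v(x)$ is supported in $zE_0^{-1}$. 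Each summand is bounded by $\|P_{\ell_2E}\lambda(f)|_{\ell_2E}\|$ after left translation, and translation commutes with $\lambda(f)$ viewed as right convolution. Since $\sum_z|\xi_x(z)|^2=1$, summing gives
\[
\|\lambda(f)\|\le\|P_E\lambda(f)|_E\|+O(\delta)\|f\|_1.
\]
To get a multiplicative $(1+\ve)$ rather than an additive error, we run the estimate not on $f$ but on $(f^*f)^{j}$ with $j\approx 1/\ve$. Its support lies in $(F\cup F^{-1})^{2j}$, which explains the exponent $|F|+\lfloor 2/\ve\rfloor$. We then take $2j$-th roots, using positivity of $f^*f$ and the fact that the compression of a positive operator to $E$ has norm at least $\ip{(f^*f)^j\delta_1,\delta_1}$-type quantities. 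This root-taking step is the main obstacle: converting an additive $\delta\|f\|_1$ error into the clean factor $(1+\ve)$ with no dependence on $\|f\|_1$. We expect to use approximate invariance applied to $\xi$ with $F$ replaced by powers of $F$ and $\delta$ chosen relative to $\ve$ alone. In particular, $\delta$ should be independent of the coefficients of $f$, because the Schur-multiplier error can be bounded by $\delta\|\lambda(f)\|$ via complete positivity rather than by $\delta\|f\|_1$.

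The consequences then follow. For strong convergence, lower semicontinuity of $\|\lambda(\vp_n(f))\|$ holds in general, since the finite compressions converge: the local structure of $\Ga_n$ on the ball $E$ eventually agrees with that of $\Ga_\infty$. Applying the main inequality with $E$ independent of $n$ gives the matching upper bound up to $1+\ve$. For the spectral radius, the quantity $(f^**f)^{*n}(1)^{1/2n}$ is bounded above by $\|\lambda(f)\|$. The compression to the finite set $E$, of size at most $(2|F|+1)^{|E|}$-bounded, is controlled by moments through the standard trace inequality
\[
\|P_E T P_E\|^{2n}\le\operatorname{Tr}(\cdots)\le |E|\sup_{x}\ip{T^{2n}\delta_x,\delta_x}.
\]
Combining this with the main inequality yields a rate depending only on $|F|$, $\ve$ and $\|f\|_1$, which gives the asserted uniformity.
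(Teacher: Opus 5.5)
The main inequality is not established, for two reasons.

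\textbf{The assumed property is vacuous.} The finitary property you posit holds in every group: $\xi_x=\delta_x$ satisfies $\supp\xi_x\subset xE_0$ and $\xi_{sx}=\lambda(s)\xi_x$ exactly. Left and right are mixed up. The error in your localization at $(y,z)$ is $\sum_s f(s)\bigl(\xi_{s^{-1}y}(z)-\xi_y(z)\bigr)v(s^{-1}y)$. So what you need is the \emph{untranslated} comparison $\xi_{sx}\approx\xi_x$, together with $\supp\xi_x\subset E_0x$. Then the cut-off sets $\{x:\xi_x(z)\neq0\}\subset E_0^{-1}z$ are \emph{right} translates of one finite set. This matters because only right translations commute with $\lambda(f)$; a left translation conjugates $f$, contrary to what you assert. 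Putting $\xi_x(y)=\eta^x(xy^{-1})^{1/2}$ turns this into the paper's normalization ($\supp\eta^x\subset E$ for all $x$, and $\eta^{sx}\approx s\eta^x$). Your cut-off-and-translate procedure then becomes the paper's u.c.p.\ map $\Psi(S)=\sum_tV_t^*SV_t$, where $V_t\delta_x=\xi^x(xt^{-1})\delta_{xt^{-1}}$ and $\xi^x=(\eta^x)^{1/2}$.

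\textbf{The multiplicative bound is missing.} More seriously, the step you yourself call the main obstacle is left open, and the proposed route cannot close it. Compression does not commute with powers: for $A\ge0$ one has $\|P_EA^jP_E\|\ge\|P_EAP_E\|^j$, which is the wrong direction. So estimating $(f^*f)^j$ and taking roots never returns to $\|P_E\lambda(f)|_{\ell_2E}\|$.

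The missing idea is elementary. Work with the bilinear form (the Schur multiplier $\ip{\xi_x,\xi_y}$ you wrote down and then set aside) rather than with squared norms. One has $\Psi(P_E\lambda(s)P_E)=\lambda(s)f_s$ with $1-f_s(x)=\tfrac12\|\xi^{sx}-s\xi^x\|_2^2\le\tfrac12\|\eta^{sx}-s\eta^x\|_1$. Hence
\[
\|\lambda(f)-\Psi(P_E\lambda(f)P_E)\|\le\sum_{s\in F}|f(s)|\,\|1-f_s\|_\infty\le\tfrac12\max_{s\in F}|f(s)|\sum_{s\in F}\sup_x\|\eta^{sx}-s\eta^x\|_1,
\]
and $|f(s)|=|\ip{\lambda(f)\delta_1,\delta_s}|\le\|\lambda(f)\|$. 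It is this coefficient bound, not complete positivity, that removes the dependence on $\|f\|_1$. The modulus controls the \emph{summed} defect by the reciprocal of the size of the set it is applied to; that is what the $\lfloor2/\ve\rfloor$ is for. So the error is $O(\ve)\|\lambda(f)\|$, and contractivity of $\Psi$ gives the multiplicative bound.

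Your squared-norm localization, even with the conventions fixed, only gives an error $\sum_s|f(s)|\sup_x\|\xi_{sx}-\xi_x\|_2$. That is of the order of the square root of the $\ell_1$-defect, which this budget does not make $O(\ve)\|\lambda(f)\|$.

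Once the main inequality is in hand, your arguments for the two consequences are fine. Strong convergence follows from lower semicontinuity plus an $E$ independent of $n$. The spectral radius statement follows from $\|P_EAP_E\|^n\le\|P_EA^nP_E\|\le|E|\,(f^**f)^{*n}(1)$ with $A=\lambda(f^**f)$.
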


\subsection*{Acknowledgments} 
The author got the key idea for this research 
during his stay at the Isaac Newton Institute 
for the program 
``Operators, Graphs, Groups'' in October 2025. 
He acknowledges the kind hospitality and the exciting
environment provided by the institute. 
He thanks M.~Bestvina, Y.~Cornulier, K.~Fujiwara, D.~Gaboriau, M.~Magee, 
F.~Paulin, M.~de la Salle, and Z.~Sela for fruitful and stimulating conversations. 
He also thanks M.~ Bo\.{z}ejko for persistently asking him 
what uniform exactness should be about twenty years ago.
He is particularly grateful to de la Salle for pointing out 
an error in the earlier draft and suggesting 
an improvement of Theorem~\ref{thmA:LaTree}. 
Theorem~\ref{thm:tame} for the case of the field $\IR$ 
was provided to us by GPT-5.6, but was also 
independently observed by de la Salle. 
\section{Main theorem for actions on $\La$-trees}
The following is the main technical result of this paper. 
The terminologies used in the statement will be explained eventually. 

\begin{thm}\label{thmA:LaTree}
Let $\Ga$ be a countable group, 
$\La$ be an ordered abelian group, 
and $X$ be a $\La$-tree 
on which $\Ga$ acts isometrically without inversions. 
Assume that the action $\Ga\acts X$ satisfies 
the following conditions. 
\begin{enumerate}
\item\label{cond:LaTreeCA}
The set of amenable subgroups of $\Ga$ is countable.
\item\label{cond:LaTreeAS}
For every $x,y\in X$, $x\neq y$,  
the subgroup 
\[
\{ t\in\Ga : \dist(tx,x)+\dist(ty,y) \ll \dist(x,y) \mbox{ in }\La\}
\]
is amenable. 
\item\label{cond:LaTreeFR} 
The action $\Ga\acts X$ has finite $\La$-rank 
$\rank_\La(\Ga\acts X)$. 
\end{enumerate}
Let $K$ be a compact $\Ga$-space. 
Assume that $K$ is amenable 
as a compact $\Ga^x$-space for the stabilizer subgroup 
$\Ga^x \coloneq \{ t\in \Ga : tx=x\}$ at every $x\in X$ 
and that the tree compactification $\hat{X}$ of $X$ is 
a $\Ga$-equivariant quotient of $K$. 
Then the compact $\Ga$-space $K$ is amenable. 
\end{thm}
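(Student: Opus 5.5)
I will prove the theorem by induction on the $\La$-rank $\rank_\La(\Ga\acts X)$, using the standard criterion that a compact $\Ga$-space $K$ is amenable iff there are weak$^*$-continuous maps $\mu_n\colon K\to\Prob(\Ga)$ with $\sup_{z\in K}\|s\mu_n(z)-\mu_n(sz)\|_1\to0$ for every $s\in\Ga$. Two permanence facts will be used throughout. First, if $K\to L$ is a $\Ga$-equivariant quotient, $L$ is amenable as a $\Ga$-space, and $K$ is amenable as a $\Ga^\xi$-space for the stabilizer $\Ga^\xi$ of every point $\xi\in L$ (or for a suitable measurable family of ``local'' subgroups), then $K$ is amenable. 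Second, amenability of actions passes to directed unions of subgroups. Condition~(\ref{cond:LaTreeCA}) enters because it lets me enumerate the amenable subgroups as $\{A_m\}_m$ and work with a single countable family of almost-invariant data attached to them.

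\textbf{Step 1 (rank one).} When $\La$ has rank one, i.e.\ embeds in $\IR$, $\hat X$ is a genuine (possibly non-simplicial) $\IR$-tree compactification. Here I adapt the proof that groups acting on trees with amenable edge-type stabilizers act amenably on the boundary. For $z\in K$ with image $\xi\in\hat X$, I define $\mu_n(z)$ by averaging along the geodesic $[x_0,\xi)$ over windows of length proportional to $n$. Each sampled point $p$ of the geodesic contributes a probability measure $\nu_{p}(z)\in\Prob(\Ga)$, obtained from the $\Ga^p$-amenability of $K$, transported by coset representatives. Changing the basepoint $x_0$ to $sx_0$ moves the geodesic only on an initial segment of bounded length, so the window averaging gives almost invariance. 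The troublesome points are those where two far-apart points of the geodesic are both almost fixed by a group element. Condition~(\ref{cond:LaTreeAS}) says exactly that such elements form an amenable subgroup, so they can be absorbed using the countable list $\{A_m\}$: replace $\nu_p$ by its average against a F{\o}lner-type measure for the relevant $A_m$. Points lying in $X$ itself, including those where the geodesic stops, are handled directly by the $\Ga^x$-hypothesis.

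\textbf{Step 2 (induction on rank).} Suppose $\La$ has a convex subgroup $\La_0$ with $\La/\La_0$ of rank one, and $\rank_\La$ drops when passing to $\La_0$. Collapsing $\La_0$-distances produces a $\La/\La_0$-tree $\bar X$ on which $\Ga$ acts. Each point stabilizer of $\bar X$ acts on the corresponding $\La_0$-subtree, a fiber of $X\to\bar X$, with strictly smaller rank. The inherited conditions are immediate: (\ref{cond:LaTreeCA}) is inherited by subgroups, and (\ref{cond:LaTreeAS}) follows because ``$\ll$'' within a $\La_0$-subtree is the same relation. By induction, $K$ is amenable as a $\Ga^{\bar x}$-space for every $\bar x\in\bar X$. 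I also need that $\hat X$ maps equivariantly onto the compactification of $\bar X$, with fibers over interior points equal to compactifications of the subtrees; I expect this to be a structural lemma about tree compactifications. Applying Step 1 to $\Ga\acts\bar X$ with the same $K$ then finishes the induction.

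\textbf{Main obstacle.} The hardest part is Step 1's verification that condition~(\ref{cond:LaTreeAS}) controls the failure of almost invariance uniformly in $z$. The measures must depend continuously on $z\in K$, not just measurably. Ends $\xi$ of $\hat X$ where the geodesic converges only in the coarse $\La$-sense are also delicate, and that is where ``$\ll$'' rather than ``$<$'' matters. There I would first attempt the approach through Borel amenability plus the fact that amenable actions on compact spaces can be tested by Borel families, as in Ozawa's and Brodzki et al.'s criteria. That route reduces continuity to measurability, and countability~(\ref{cond:LaTreeCA}) guarantees the required measurable selections.
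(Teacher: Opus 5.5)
There are two genuine gaps.

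\textbf{Step~2: choosing the scale.} Step~2 assumes a convex subgroup $\La_0$ with $\La/\La_0$ of rank one, such that the rank of the action drops on every fiber. Neither is available in general.

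First, $\La$ need not have a maximal proper convex subgroup. For example, $\IZ^\cU$, the case of interest, has none, since $[b_n]_n\ll[2^{b_n}]_n$ for every unbounded $[b_n]_n$. Second, even when such a $\La_0$ exists, $\Ga$ may fix a point $\bar x$ of the collapsed tree (e.g.\ all displacements at some point lie in $\La_0$). The fiber over $\bar x$ then carries the whole $\Ga$-action with the same rank, and the induction stalls.

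The scale must be adapted to the action. This is exactly where the hypotheses you never use (finite generation after reduction, no inversions) enter the paper's proof:
\begin{enumerate}
\item reduce to a finitely generated $\Ga=\ip{\cS}$;
\item use absence of inversions to pick $x$ minimizing $\ip{\delta_\cS(x)}_{\conv}$ (Lemma~\ref{lem:center});
\item base-change $X(x,M)$, with $M=\ip{a}_{\conv}$ and $a=\delta_\cS(x)$, by $\ve^a$, i.e.\ collapse $M^\circ$.
\end{enumerate}
Minimality gives $a\preceq\delta_\cS(z)$ for every $z$. Hence the stabilizer $\Ga^b$ of a point of the resulting $\IR$-tree moves each $z\in X(y,M^\circ)$ by $\ll\delta_\cS(z)$. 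So at least one $\asymp_z$-class is lost, which is the rank drop.

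\textbf{Step~1: the interior of the $\IR$-tree.} Step~1 does not handle the interior points of the $\IR$-tree. There are uncountably many orbits there. The point-stabilizer hypothesis only gives amenability of each orbit separately (via Lemma~\ref{lem:fd}), and that says nothing about the union: a free action of a non-amenable group has amenable orbits.

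The missing idea is that the orbit equivalence relation of $\Ga$ on the non-branch points of a \emph{separable} invariant $\IR$-tree $T$ is $\cM$-amenable. On finitely many arcs and for finitely many group elements, it is a subrelation of the orbit relation of a countable subgroup of $\IR\rtimes(\IZ/2)$; Lemma~\ref{lem:becd} then passes to the increasing union.

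The two conditions play narrower roles than in your sketch; neither works by F{\o}lner-averaging near ``troublesome points''.
\begin{itemize}
\item Condition~(\ref{cond:LaTreeAS}) only makes arc stabilizers, hence non-branch stabilizers (Lemma~\ref{lem:nbst}), amenable.
\item Condition~(\ref{cond:LaTreeCA}) only lets Lemma~\ref{lem:borelamencriterion} combine these stabilizers with the amenable orbit relation.
\end{itemize}
Only once $T\setminus B$ is Borel amenable does your window averaging along rays work; that part matches the paper's treatment of $\partial T$ via the length measure.

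The inductive hypothesis is then needed only at the countably many branch points $B$. The pieces are glued by Proposition~\ref{prop:amencriterion} through Theorem~\ref{thm:topbob}, which requires $\hat T$ to be second countable; this is why one passes to a separable subtree.

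Relatedly, your first ``permanence fact'' is vacuous as stated: if $L$ is amenable, then $K$ already is. The usable version asks only that $L$ be Borel amenable off a countable invariant set.
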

\section{Amenability of $\Ga$-spaces and equivalence relations}\label{sec:amen}
Let $\Ga$ be a group and denote by 
$\Prob\Ga \subset \ell_1\Ga$ 
the convex subset of probability measures on $\Ga$. 
The space $\Prob\Ga$ is 
equipped with the norm topology 
(which coincides with the pointwise convergence topology) and with 
the $\Ga$-action by left translations. 
Recall that a group $\Ga$ is \emph{amenable} if there is 
a net $\xi_i\in\Prob\Ga$, 
called an \emph{approximate invariant mean}, that 
satisfies 
$\lim_i \|\xi_i - s\xi_i\|_1=0$ for every $s\in\Ga$. 
We review the notion of amenability that is generalized 
for $\Ga$-actions (\cite{anantharaman-delaroche,adr}) 
and equivalence relations (\cite{jkl}) through 
existence of appropriate approximate invariant means.

A \emph{compact $\Ga$-space} $K$ 
is a compact topological space $K$ 
equipped with an action of $\Ga$ by homeomorphisms, 
and a \emph{Borel $\Ga$-space} $Z$ is a 
standard Borel space equipped 
with an action of a countable group 
$\Ga$ by Borel isomorphisms. 
Be aware that we put a countability assumption 
on Borel $\Ga$-spaces, but not on compact $\Ga$-spaces. 

\begin{defn}\label{def:topoamen}
A compact $\Ga$-space $K$ is \emph{(topologically) amenable} 
if there is a net 
of continuous maps $\eta_i\colon K\to\Prob\Ga$ that satisfies 
\[
\forall s\in\Ga\quad \lim_i \sup_{x\in K} \| \eta_i^{sx} - s\eta_i^x \|_1 = 0. 
\] 
By continuity of $\eta_i$, we may assume that 
$E_i \coloneq \bigcup_x \supp\eta_i^x$ is a finite subset of $\Ga$ 
(see Lemma 4.3.8 in \cite{bo}). 
The net $(\eta_i)_i$ is called an \emph{approximate equivariant mean}. 
Instead of saying that $K$ is amenable, 
we also say that the action $\Ga\acts K$ 
is \emph{topologically amenable} 
or the $\Ga$-action on the \cst{} $C(K)$ is amenable. 

Let $\Prob Z$ denote the set of probability measures on $Z$. 
A Borel $\Ga$-space $Z$ (with $\Ga$ being countable) 
is \emph{amenable} if there is a net of 
Borel maps $\eta_i\colon Z\to\Prob \Ga$ 
that satisfies 
\[
\forall s\in\Ga\ \forall m\in \Prob Z\quad \lim_i \int_Z \| \eta_i^{sx} - s\eta_i^x \|_1\,dm(x) = 0, 
\]
or equivalently, if for every finite 
subset $F\Subset\Ga$, $m\in \Prob Z$, and $\ve>0$, 
there is a Borel map $\eta\colon Z\to\Prob \Ga$ that satisfies
\[
\sum_{s\in F}\int_Z \| \eta^{sx} - s\eta^x \|_1\,dm(x) < \ve. 
\]
Here $\eta\colon Z\to\Prob \Ga$ is Borel if and only if 
$x\mapsto\eta^x(t)$ is Borel for every $t\in\Ga$.
\end{defn}

We make some observations. 
For a compact $\Ga$-space $K$ and its 
quotient $\Ga$-space $L$   
(or equivalently $C(L)\subset C(K)$),  
amenability of $L$ is succeeded by $K$. 
A compact/Borel $\Ga$-space is amenable if 
it is the case for the restriction to 
finitely generated subgroups. 
A countable union of amenable Borel $\Ga$-spaces 
is amenable. 
The following is very useful. 

\begin{thm}[\cite{anantharaman-delaroche,adr}, see also 5.2.1 in \cite{bo}]\label{thm:topbob}
Let $\Ga$ be a countable group 
and $K$ be a second countable compact $\Ga$-space. 
Then $K$ is topologically amenable if and only if it is amenable 
as a Borel $\Ga$-space. 
\end{thm}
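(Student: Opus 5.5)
The forward implication is formal. A continuous map $\eta\colon K\to\Prob\Ga$ is Borel. If $\sup_x\|\eta_i^{sx}-s\eta_i^x\|_1\to0$, then $\int_K\|\eta_i^{sx}-s\eta_i^x\|_1\,dm(x)\to0$ for every $m\in\Prob K$. So topological amenability gives Borel amenability; here $K$ is a standard Borel space because it is second countable and compact, hence metrizable.

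For the converse I would not go through groupoids directly. Instead I would use a Hahn--Banach (minimax) argument that turns the $L^1$-type estimates into uniform ones. Fix a finite $F\Subset\Ga$ and $\ve>0$. For a continuous $\eta\colon K\to\Prob\Ga$ put
\[
g_\eta(x)\coloneq\sum_{s\in F}\|\eta^{sx}-s\eta^x\|_1 ,
\]
which is a continuous function on $K$. It suffices to find $\eta$ with $\|g_\eta\|_\infty<\ve$. Since $\eta\mapsto\eta^{sx}-s\eta^x$ is affine and the norm is convex, we get $g_{\sum_j c_j\eta_j}\le\sum_j c_jg_{\eta_j}$. Hence the set
\[
\cC\coloneq\{g_\eta+h : \eta \mbox{ continuous},\ h\in C(K),\ h\ge0\}
\]
is convex in $C(K)_\IR$. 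Suppose that $\|g_\eta\|_\infty\ge\ve$ for every continuous $\eta$. Then $\cC$ is disjoint from the open convex set $\{f : f<\ve \mbox{ everywhere}\}$. Hahn--Banach separation gives a nonzero real functional on $C(K)$. It is nonnegative on $C(K)_+$, because $\cC$ is stable under adding positive functions, so it is a positive measure; normalizing it to $m\in\Prob K$, we obtain $\int_K g_\eta\,dm\ge\ve$ for all continuous $\eta$.

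We then contradict Borel amenability. Let $\tilde m\coloneq\frac{1}{|F|+1}\bigl(m+\sum_{s\in F}s_*m\bigr)$; by Definition~\ref{def:topoamen} there is a Borel $\eta\colon K\to\Prob\Ga$ with $\sum_{s\in F}\int\|\eta^{sx}-s\eta^x\|_1\,dm<\ve/3$. Two approximations are needed. First, truncate $\eta$ to a finite subset $E\Subset\Ga$ and renormalize, at small cost in $L^1(\tilde m;\ell_1\Ga)$. This uses dominated convergence: $\sum_{t\notin E}\eta^x(t)\to0$ pointwise as $E$ increases. Second, by Lusin's theorem, or density of $C(K)$ in $L^1(\tilde m)$, approximate each coordinate $x\mapsto\eta^x(t)$, $t\in E$, by continuous nonnegative functions. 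Renormalizing again gives a continuous $\eta'\colon K\to\Prob\Ga$ with $\int\|\eta'^x-\eta^x\|_1\,d\tilde m$ as small as we like; renormalization costs at most a factor $2$ in $\ell_1$.

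The domination $s_*m\le(|F|+1)\tilde m$ then controls the error at the translated points $sx$. Writing $\delta\coloneq\int\|\eta'-\eta\|_1\,d\tilde m$, we have for each $s\in F$
\[
\int\|\eta'^{sx}-\eta^{sx}\|_1\,dm\le(|F|+1)\delta,
\qquad
\int\|s\eta'^x-s\eta^x\|_1\,dm\le(|F|+1)\delta .
\]
Therefore $\int g_{\eta'}\,dm<\ve/3+2|F|(|F|+1)\delta<\ve$ once $\delta$ is small, which contradicts the separation. Running this over an increasing sequence of finite sets $F$ and $\ve\to0$ (countable since $\Ga$ is countable) yields the approximate equivariant mean.

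The main obstacle is the separation step: one must check that $\cC$ is really convex, which relies on the sub-convexity of $\eta\mapsto g_\eta$ and on absorbing positive functions, and that the separating functional is positive, hence a probability measure. The approximation of Borel maps by continuous maps into $\Prob\Ga$, uniformly along the finitely many translates $s_*m$, is routine but must be done with the averaged measure $\tilde m$ as above.
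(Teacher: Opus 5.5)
Your proof is correct. The paper gives no argument of its own here: the statement is quoted from \cite{anantharaman-delaroche,adr}, where it comes out of the general theory of amenable groupoids (Borel or measurewise versus topological amenability of the transformation groupoid $K\rtimes\Ga$), with \cite{bo} as a further reference.

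Your route is the direct, groupoid-free one, and it runs in three steps:
\begin{enumerate}
\item The pointwise sub-convexity $g_{\sum_j c_j\eta_j}\le\sum_j c_j g_{\eta_j}$ makes $\{g_\eta\}+C(K)_+$ convex.
\item Separating this set from the open set $\{f<\ve\}$, and using that it absorbs positive functions, produces a single probability measure $m$ with $\int g_\eta\,dm\ge\ve$ for every continuous $\eta$.
\item Truncation plus $L^1$-approximation with respect to $\tilde m$, which dominates all translates $s_*m$ for $s\in F$, turns a Borel $\eta$ that is good for $m$ into a continuous one. This is a contradiction.
\end{enumerate}

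What your approach buys is a self-contained proof of exactly the direction the paper actually uses (in Proposition~\ref{prop:amencriterion}). It relies only on Riesz representation, Hahn--Banach, dominated convergence, and density of $C(K)$ in $L^1(\tilde m)$. The citation instead places the result in the general groupoid framework, which is more than the paper needs.

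One detail to tidy. After approximating the coordinates $x\mapsto\eta^x(t)$, $t\in E$, by continuous $f^t\ge0$, the sum $\sum_t f^t(x)$ may vanish at some points. So normalize $f+\delta'\delta_1$ (with small $\delta'>0$) rather than $f$. This keeps $\eta'$ everywhere defined and continuous, and adds only $O(\delta')$ to your $\delta$.
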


Let $Z$ be a standard Borel space. 
A \emph{countable Borel equivalence relation} $\cR$ on $Z$ 
is an equivalence relation $\cR\subset Z\times Z$ such that 
each equivalence class is at most countable and that $\cR$ 
is a Borel subset of $Z\times Z$. 
For $x\in Z$, we write $[x]_{\cR}$ 
for the equivalence class that contains $x$.
We denote by $[\cR]$ the \emph{full group} consisting 
of Borel bijections $\vp$ on $Z$ that 
satisfies $\vp(x)\in[x]_\cR$ for all $x$.
For a function $\nu$ on $\cR$, 
we denote by $\nu^x$ the function on $[x]_{\cR}$ 
given by $\nu^x(y) \coloneq \nu(x,y)$. 
We need a notion of amenability for countable Borel 
equivalence relations that is stronger than 
the standard one (\cite{jkl}). 
Although Fr\'{e}chet-amenability (\cite{jkl}) meets 
our purpose, we introduce an even stronger one. 

\begin{defn}
A countable Borel equivalence relation $\cR$ on $Z$ 
is \emph{$\cM$-amenable} if there is a net of 
non-negative Borel functions $\nu_i$ on $\cR$ 
such that $\nu_i^x\in\Prob {[x]_{\cR}}$ 
for all $x$ and 
\[
\forall \vp\in[\cR]\,\forall m\in\Prob Z\quad 
\lim_i\int_Z \|\nu_i^{\vp(x)}-\nu_i^x\|_1 \,dm(x)=0.
\]
\end{defn}

For a Borel $\Ga$-space $Z$ (with $\Ga$ being countable), 
we denote the corresponding 
\emph{orbit equivalence relation} by 
\[
\cR_{\Ga\acts Z} \coloneq \{ (x,t^{-1}x) : x\in Z,\ t\in\Ga \} \subset Z\times Z.
\]
By the Feldman--Moore theorem, 
every countable Borel equivalence relation arises in this way.
If the Borel action $\Ga\acts Z$ is free, then the bijective 
correspondence 
$Z\times\Ga \ni (x,t) \leftrightarrow (x,t^{-1}x) \in \cR_{\Ga\acts Z}$ 
gives equivalence between amenability of $\Ga\acts Z$ and 
that of $\cR_{\Ga\acts Z}$. 
We note that $\cR_{\Ga\acts Z}$ is $\cM$-amenable 
whenever $\Ga$ is amenable. 
We will collect basic permanence properties of amenability. 

\begin{lem}\label{lem:fd} 
Let $\Ga$ be a countable group and 
$Z$ be a Borel $\Ga$-space. 
Suppose $\Delta\le\Ga$ is a subgroup and $Y\subset Z$ is 
a $\Delta$-invariant Borel subset that satisfy 
$gY\cap Y=\emptyset$ for $g\in\Ga\setminus\Delta$ and  
$\Ga Y=Z$.  
If $Y$ is amenable as a Borel $\Delta$-space, then $Z$ is amenable. 
\end{lem}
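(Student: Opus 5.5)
The idea is an induced-action argument. We build the approximate invariant means on $Z$ by transporting means on $Y$ along a Borel choice of coset representatives. Since $\Ga$ is countable, $\Ga/\Delta$ is countable, and we fix representatives $\{g_j\}_{j\in J}$ of the left cosets $\Ga/\Delta$ with $g_{j_0}=1$. The hypotheses $gY\cap Y=\emptyset$ for $g\notin\Delta$ and $\Ga Y=Z$ say that $Z=\bigsqcup_j g_jY$ is a disjoint union. Each piece $g_jY$ is Borel, being the image of a Borel set under a Borel automorphism. So for every $x\in Z$ there are a unique $j=j(x)$ and a unique $y=y(x)\in Y$ with $x=g_jy$, and both $x\mapsto j(x)$ and $x\mapsto y(x)=g_{j(x)}^{-1}x$ are Borel.

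Given a Borel map $\eta\colon Y\to\Prob\Delta$, extend it by
\[
\tilde\eta^x \coloneq g_{j(x)}\,\eta^{y(x)}\in\Prob\Ga,
\]
where $\Prob\Delta\subset\Prob\Ga$. This is Borel. For $s\in\Ga$ and $x=g_jy$, we have $sg_j=g_{k}\delta$ for a unique $k$ and $\delta=\delta(s,j)\in\Delta$. Then $sx=g_k(\delta y)$ with $\delta y\in Y$, so
\[
\|\tilde\eta^{sx}-s\tilde\eta^x\|_1=\|g_k\eta^{\delta y}-g_k\delta\eta^y\|_1=\|\eta^{\delta y}-\delta\eta^y\|_1 .
\]
The defect on $Z$ is therefore controlled by the defect on $Y$, but for the element $\delta(s,j)$, which depends on the coset.

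Now fix a finite $F\Subset\Ga$, $m\in\Prob Z$ and $\ve>0$, and verify the second formulation of amenability in Definition~\ref{def:topoamen}. Choose a finite $J_0\subset J$ with $m(\bigcup_{j\notin J_0}g_jY)<\ve/(4|F|)$; pairs outside it contribute at most $2$ each, for a total under $\ve/2$. The set $D\coloneq\{\delta(s,j): s\in F,\ j\in J_0\}\subset\Delta$ is finite. Define $\mu\in\Prob Y$ by $\mu\coloneq\sum_{j\in J_0}(g_j^{-1})_*(m|_{g_jY})$, normalized; if it vanishes, anything works. Amenability of the Borel $\Delta$-space $Y$ gives a Borel $\eta$ with
\[
\sum_{\delta\in D}\int_Y\|\eta^{\delta y}-\delta\eta^y\|_1\,d\mu(y)<\ve/2 .
\]
By the identity above,
\[
\sum_{s\in F}\int_{\bigcup_{J_0}g_jY}\|\tilde\eta^{sx}-s\tilde\eta^x\|_1\,dm\le\sum_{s\in F}\sum_{j\in J_0}\int_Y\|\eta^{\delta(s,j)y}-\delta(s,j)\eta^y\|_1\,d(g_j^{-1})_*m|_{g_jY}.
\]
Each summand is bounded by the integral against $\mu$, up to the normalizing constant $\le1$, and each $\delta\in D$ occurs for at most $|F||J_0|$ pairs. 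So we apply the amenability hypothesis with $\ve/(2|F||J_0|)$ in place of $\ve/2$ to get a total below $\ve$.

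The only real point is this bookkeeping: the cocycle $\delta(s,j)$ ranges over infinitely many elements as $j$ varies. Truncating to a finite set of cosets, which is possible because $m$ is a probability measure, together with the countability of $\Ga/\Delta$, makes the estimate work. Borel measurability is routine because the decomposition is countable.
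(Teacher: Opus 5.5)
Your proof is correct and follows the paper's argument. Your coset representatives $g_j$ and elements $\delta(s,j)$ are the paper's lift $\varsigma$ and cocycle $\alpha(s,p)$, and you use the same identity $\|\tilde\eta^{sx}-s\tilde\eta^x\|_1=\|\eta^{\delta y}-\delta\eta^y\|_1$. The only difference is bookkeeping: the paper works with a net and sums over cosets using the bound $2\|m_p\|$ on each term, whereas you truncate to finitely many cosets and combine them into a single measure $\mu$ on $Y$, so that the $(F,m,\ve)$ formulation can be applied once.
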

\begin{proof}
We view $\bar{\Ga} \coloneq \Ga/\Delta$ 
as a space on which $\Ga$ acts; 
$\Ga\times\bar{\Ga}\ni(s,p)\mapsto sp\in\bar{\Ga}$.
Fix a lift $\varsigma\colon \bar{\Ga}\to\Ga$ and let 
$\alpha\colon \Ga \times \bar{\Ga} \to \Delta$ be 
the corresponding cocycle given by 
$\alpha(s,p) \coloneq \varsigma(sp)^{-1}s\varsigma(p)$. 
We may assume $Z=\bar{\Ga}\times Y$, via 
the Borel bijection 
$\bar{\Ga}\times Y\ni (p,y) \mapsto \varsigma(p)y\in Z$, 
where 
$\Ga$ acts on $\bar{\Ga}\times Y$ by $s(p,y) = (sp,\alpha(s,p)y)$.
Given an approximate equivariant mean $\eta_i\colon Y\to \Prob \Delta$, 
we define $\zeta_i\colon \bar{\Ga}\times Y\to \Prob \Ga$ 
by $\zeta_i^{(p,y)} \coloneq \varsigma(p)\eta_i^y$. 
Then, for every $s\in\Ga$, 
\[
\| \zeta_i^{s(p,y)} - s\zeta_i^{(p,y)} \|_1
 = \| \varsigma(sp)\eta_i^{\alpha(s,p)y} - s\varsigma(p)\eta_i^y \|_1
 = \| \eta_i^{\alpha(s,p)y} -  \alpha(s,p)\eta_i^y \|_1.
\]
Hence, for every $m =(m_p)_{p\in\bar{\Ga}}\in \Prob(\bar{\Ga}\times Y)$, one has 
\[
\int \| \zeta_i^{s(p,y)} - s\zeta_i^{(p,y)} \|_1\,dm(p,y)
 = \sum_p \int \| \eta_i^{\alpha(s,p)y} -  \alpha(s,p)\eta_i^y \|_1\,dm_p(y)
 \to0,
\]
since each integral in the RHS is bounded by $2\|m_p\|$ and 
converges to $0$. 
\end{proof}

\begin{lem}\label{lem:borelamencriterion}
Let $\Ga$ be a countable group and 
$Z$ be a Borel $\Ga$-space. 
Suppose that the countable Borel equivalence 
relation $\cR_{\Ga\acts Z}$ is $\cM$-amenable 
and that $\{\Ga^x : x\in Z\}$ consists of 
a countable family of amenable subgroups. 
Then the Borel $\Ga$-space $Z$ is amenable.
\end{lem}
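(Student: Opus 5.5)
The strategy is to combine the $\cM$-amenable means on the orbit equivalence relation with approximate invariant means of the stabilizers. Pick a Borel map from the orbit equivalence relation to group elements, and average stabilizer means along those elements.

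First I partition $Z$ by stabilizer. Let $\{H_j\}_{j}$ be the countable family of stabilizers, and put $Z_j \coloneq \{x : \Ga^x = H_j\}$. Each $Z_j$ is Borel: the condition $\Ga^x=H$ amounts to countably many conditions $tx=x$ or $tx\neq x$. Since a countable union of amenable Borel spaces is amenable, it suffices to test the defining inequality against a measure $m$. We may then work uniformly on all of $Z$.

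Fix Borel approximate invariant means $\mu_{j,i}\in\Prob H_j$. For each $x$ choose Borel, via Lusin--Novikov uniformization, a section $\sigma\colon\cR_{\Ga\acts Z}\to\Ga$ with $\sigma(x,y)^{-1}x=y$. Concretely, take the least $t$ in a fixed enumeration of $\Ga$ such that $t^{-1}x=y$. Then the set of all $t$ with $t^{-1}x=y$ is the coset $\sigma(x,y)\Ga^y$.

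Given $\nu_i$ from $\cM$-amenability and a stabilizer mean $\mu$, define
\[
\eta^x \coloneq \sum_{y\in[x]} \nu_i^x(y)\,\sigma(x,y)\,\mu^{y},
\]
where $\mu^y\coloneq\mu_{j,i}$ for $y\in Z_j$. This is supported on the elements $t$ with $t^{-1}x=y$, up to the choice of coset representative.

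Now compare $\eta^{sx}$ with $s\eta^x$. Note that $\sigma(sx,y)$ and $s\sigma(x,y)$ both lie in the coset $s\sigma(x,y)\Ga^y$. So $s\sigma(x,y)=\sigma(sx,y)h$ with $h=h(s,x,y)\in\Ga^y$, and
\[
\|\eta^{sx}-s\eta^x\|_1 \le \|\nu_i^{sx}-\nu_i^x\|_1 + \sum_y \nu_i^x(y)\,\|\mu^y - h\mu^y\|_1 .
\]
The first term integrates to something small, because $x\mapsto sx$ belongs to the full group $[\cR]$.

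The second term is the main obstacle. Here $h$ ranges over possibly infinitely many elements of $\Ga^y$ as $x$ and $y$ vary, while $\mu$ is only approximately invariant under finitely many elements at a time. I would handle it by pushing forward $m$: consider the measure $\tilde m$ on $\cR$ given by $d\tilde m(x,y)=\nu_i^x(y)\,dm(x)$, and the Borel map $(x,y)\mapsto(y,h(s,x,y))$. For a fixed $i$, this pushes $\tilde m$ to a probability measure on $\bigsqcup_j Z_j\times H_j$. By tightness, mass at least $1-\ve$ sits on finitely many $j$ and finitely many $h$. Choosing the stabilizer means after $\nu_i$ is fixed, so that they are $\ve$-invariant under those finitely many $h$ for those finitely many $j$, makes the second term at most $3\ve$. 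The order of quantifiers, namely choosing $\nu$ first for the given $F,m,\ve$ and then $\mu$, is what makes this work. Since the equivalent formulation in Definition~\ref{def:topoamen} only requires one map $\eta$ for each given $F$, $m$, $\ve$, this completes the proof.
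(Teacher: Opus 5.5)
Your proof is correct, and it takes a more direct route than the paper.

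\textbf{The paper's route.} It first splits $Z$ into countably many $\Ga$-invariant pieces on each of which all stabilizers are conjugate to a single subgroup $\Delta$. It then uses Lemma~\ref{lem:fd} to pass to the $N_\Ga(\Delta)$-space $\{x:\Ga^x=\Delta\}$. There $\Ga/\Delta$ acts freely, so $\cM$-amenability of the orbit relation immediately yields a $\Prob(\Ga/\Delta)$-valued approximate equivariant mean $\eta$. Finally it tensors $\eta$ with one $\xi\in\Prob\Delta$, using a global lift $\varsigma\colon\Ga/\Delta\to\Ga$ and its cocycle $\alpha$.

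\textbf{Your route.} You work on all of $\cR_{\Ga\acts Z}$ at once. The Borel section $\sigma$ plays the role of $\varsigma$, and the element $h(s,x,y)=\sigma(sx,y)^{-1}s\sigma(x,y)\in\Ga^y$ plays the role of $\alpha$. A Borel field $y\mapsto\mu^y\in\Prob\Ga^y$ replaces the single $\xi$. This handles varying stabilizers simultaneously, with no need for Lemma~\ref{lem:fd} or the normalizer reduction.

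\textbf{The common step.} The decisive move is the same in both. You fix the mean on the equivalence relation first, and only then choose the stabilizer means, using tightness of a finite measure on a countable set. For you this set is $\bigsqcup_j\{j\}\times H_j$; for the paper it is the measure on $\Delta$ that weights $\alpha(s,s^{-1}p)$ by $\int\eta^x(s^{-1}p)\,dm(x)$.

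\textbf{Trade-off.} Your version is more self-contained. The paper's is more structured, with one amenable group and one cocycle, and it reuses Lemma~\ref{lem:fd}, which it needs anyway for Proposition~\ref{prop:amencriterion}.

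\textbf{A small correction.} Your appeal to ``a countable union of amenable Borel spaces'' does not apply to the sets $Z_j=\{x:\Ga^x=H_j\}$. These are not $\Ga$-invariant, since stabilizers along an orbit are conjugate rather than equal. Your argument never actually uses that claim, though: the $Z_j$ only serve to make $y\mapsto\mu^y$ Borel and to index the tightness step.
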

\begin{proof}
We may assume by a countable decomposition 
that all $\Ga^x$ are conjugate to a single subgroup $\Delta\le\Ga$. 
The Borel subset $Y \coloneq \{ x\in Z : \Ga^x =\Delta \}$ 
is invariant under the normalizer subgroup $N_\Ga(\Delta)$ 
of $\Delta$ in $\Ga$.
Thanks to Lemma~\ref{lem:fd}, 
it suffices to show that $Y$ is amenable as 
a Borel $N_\Ga(\Delta)$-space.
Thus, we may assume that $Y=Z$ and $N_\Ga(\Delta)=\Ga$, 
i.e., $\Delta = \ker(\Ga\acts Z)$ 
and the action $\bar{\Ga} \coloneq \Ga/\Delta\acts Z$ is free. 
It follows that $Z$ is amenable as a Borel $\bar{\Ga}$-space. 

As in Proof of Lemma~\ref{lem:fd},  
fix a lift $\varsigma\colon \bar{\Ga}\to\Ga$ and the cocycle
$\alpha\colon \Ga \times \bar{\Ga} \to \Delta$.
Thus the bijection
$\theta\colon \bar{\Ga} \times \Delta \to\Ga$, given by 
 $(p,t) \mapsto \varsigma(p)t$, 
satisfies $s\theta(p,t) = \theta( sp,\alpha(s,p)t )$. 
For $\eta\colon Z\to \Prob \bar{\Ga}$ 
and $\xi\in\Prob \Delta$, we define 
$\zeta\colon Z\to\Prob \Ga$ by 
$\zeta^x = (\eta^x\otimes \xi)\circ\theta^{-1}$. 
Then 
\begin{align*}
\| \zeta^{sx}-s\zeta^x\|_1
 &\le \| \zeta^{sx}-(s\eta^x\otimes\xi)\circ\theta^{-1}\|_1
   + \| (s\eta^x\otimes\xi)\circ\theta^{-1}- s\zeta^x\|_1\\
 &\le \| \eta^{sx}-s\eta^x\|_1 
   + \sum_p\eta^x(s^{-1}p)\|\xi-\alpha(s,s^{-1}p)\xi\|_1.
\end{align*}
Let a finite subset $F\Subset\Ga$, $m\in\Prob Z$, and $\ve>0$ be given. 
There is $\eta$ that satisfies 
\[
\sum_{s\in F}\int_Z \| \eta^{sx}-s\eta^x\|_1\,dm(x) < \ve/2.
\]
Since $\sum_{s,p}\int_Z \eta^x(s^{-1}p)\,dm(x) = |F|$, 
there is $\xi$ that satisfies 
\[
\sum_{s\in F}\int_Z \sum_p\eta^x(s^{-1}p)\|\xi-\alpha(s,s^{-1}p)\xi\|_1\,dm(x) <\ve/2.
\]
It follows that for these $\eta$ and $\xi$, 
the Borel map $\zeta\colon Z\to\Prob \Ga$ satisfies\\ 
\begin{minipage}{.9\textwidth}
\[
\sum_{s\in F}\int_Z \| \zeta^{sx}-s\zeta^x\|_1\,dm(x) < \ve.
\]
\end{minipage} 
\end{proof}

\begin{prop}\label{prop:amencriterion}
Let $\Ga$ be a countable group and 
$K$ be a compact $\Ga$-space 
with a $\Ga$-equi\-variant quotient map $Q$ from 
$K$ onto a second countable compact $\Ga$-space $Y$. 
Suppose that there is a countable $\Ga$-invariant subset 
$B\subset Y$ such that the Borel $\Ga$-space 
$Y\setminus B$ is amenable and that 
$Q^{-1}(b)$ is amenable as a compact $\Ga^b$-space 
for every $b\in B$.
Then the compact $\Ga$-space $K$ is amenable.  
\end{prop}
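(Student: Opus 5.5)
The plan is to reduce to the second countable case, where Theorem~\ref{thm:topbob} turns topological amenability into Borel amenability. There the Borel $\Ga$-space splits into the part over $Y\setminus B$ and countably many orbits over $B$. Throughout, an approximate equivariant mean may be taken with finitely supported values (Definition~\ref{def:topoamen}), so it is a finite family of continuous functions $x\mapsto\eta^x(t)$.

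\emph{Step 1 (separable reduction).} Since $\Ga$ and $B$ are countable, for each $b\in B$ we fix a sequence of continuous maps $\eta_{b,n}\colon Q^{-1}(b)\to\Prob\Ga^b$ with finite supports witnessing amenability of the compact $\Ga^b$-space $Q^{-1}(b)$. Each coordinate function $x\mapsto\eta_{b,n}^x(t)$ is continuous on the closed set $Q^{-1}(b)$, so Tietze gives an extension to an element of $C(K)$. We then let $D\subset C(K)$ be the smallest $\Ga$-invariant unital \cst{} containing $Q^*C(Y)$ (separable, since $Y$ is second countable) and all these countably many extensions. Then $D$ is separable, so $D=C(K')$ for a second countable compact $\Ga$-space $K'$. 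Moreover $K\to K'\to Y$ factors $Q$; write $Q'\colon K'\to Y$.

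The fiber $Q'^{-1}(b)$ is the image of $Q^{-1}(b)$, and the restrictions of the $\eta_{b,n}$ factor through it by construction. Hence $Q'^{-1}(b)$ is an amenable compact $\Ga^b$-space. Since amenability of a quotient passes to $K$, it suffices to prove that $K'$ is amenable.

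\emph{Step 2 (Borel amenability of $K'$).} By Theorem~\ref{thm:topbob} it is enough to show that $K'$ is amenable as a Borel $\Ga$-space. We write $K'=Q'^{-1}(Y\setminus B)\sqcup\bigsqcup_{\Ga b\subset B}Q'^{-1}(\Ga b)$, a countable union of $\Ga$-invariant Borel pieces, and recall that a countable union of amenable Borel $\Ga$-spaces is amenable.
\begin{enumerate}
\item On $Q'^{-1}(Y\setminus B)$ we take $\eta_i\circ Q'$, where $(\eta_i)$ are Borel means on $Y\setminus B$. For $m\in\Prob Q'^{-1}(Y\setminus B)$ we push $m$ forward to $Q'_*m$ and use equivariance of $Q'$; this shows the piece is amenable.
\item For an orbit $\Ga b$ we apply Lemma~\ref{lem:fd} with $\Delta=\Ga^b$ and the subset $Q'^{-1}(b)$. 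This set is $\Ga^b$-invariant, satisfies $gQ'^{-1}(b)\cap Q'^{-1}(b)=Q'^{-1}(gb)\cap Q'^{-1}(b)=\emptyset$ for $g\notin\Ga^b$, and its $\Ga$-translates cover $Q'^{-1}(\Ga b)$. It is amenable as a Borel $\Ga^b$-space: the continuous maps from Step~1 are Borel, and uniform convergence of $\sup_x\|\eta^{sx}-s\eta^x\|_1$ to $0$ forces the integrals against any $m$ to converge to $0$.
\end{enumerate}
Combining the two cases gives the Borel amenability of $K'$.

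The main obstacle is Step~1. Theorem~\ref{thm:topbob} needs second countability, while the hypothesis on fibers is topological and lives on the possibly huge space $K$. The key point is to build the separable quotient so that it simultaneously dominates $Y$ and retains enough functions to witness fiber amenability; countability of $B$ and of $\Ga$ is exactly what makes this possible. The remaining verifications are routine applications of Lemma~\ref{lem:fd} and the permanence observations made after Definition~\ref{def:topoamen}.
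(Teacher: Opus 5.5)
Your proposal is correct and follows the paper's proof: first reduce to a second countable $K$, then split it into $Q^{-1}(Y\setminus B)$ and the countably many orbit pieces $Q^{-1}(\Ga b)$, which Lemma~\ref{lem:fd} handles, and conclude with Theorem~\ref{thm:topbob}. Your Step~1, which Tietze-extends countably many witnesses of fiber amenability and adjoins $Q^*C(Y)$ inside a separable $\Ga$-invariant subalgebra, is exactly the content of the paper's one-line remark that amenability is witnessed by countably many continuous functions, written out in full.
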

\begin{proof}
Since topological amenability is witnessed by 
a countable family of continuous functions, 
we may assume that $K$ is second countable. 
The Borel $\Ga$-space
$Q^{-1}(Y\setminus B)$ is amenable and, by Lemma~\ref{lem:fd}, 
so are $Q^{-1}(\Ga b)$ for $b\in B$. 
Thus $K$ is a countable union of amenable Borel $\Ga$-spaces 
and hence is topologically amenable 
by Theorem~\ref{thm:topbob}. 
\end{proof}

The following lemmas are taken from \cite{jkl}.

\begin{lem}[\cite{jkl}]
Let $\cR$ be a countable Borel equivalence relation on $Z$ 
and $A\subset Z$ be a Borel subset. 
If $\cR$ is $\cM$-amenable, 
then so is the restriction $\cR|_A \coloneq \cR\cap(A\times A)$.  
\end{lem}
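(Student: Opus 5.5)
The plan is to restrict the means witnessing $\cM$-amenability of $\cR$ to $A$ after pushing them onto $A$ via a Borel retraction along $\cR$-classes. This is the standard argument in \cite{jkl}, adapted to the stronger $\cM$-amenability condition.

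First, dispose of the degenerate part. Let $A' \coloneq \{x\in A : [x]_\cR\cap A \text{ is finite}\}$, which is a Borel, $\cR|_A$-invariant subset of $A$. On $A'$, set $\nu^x$ to be the uniform probability measure on $[x]_{\cR}\cap A$. This choice is exactly invariant under every $\vp\in[\cR|_A]$, since $\vp$ preserves classes. We may therefore assume that every $\cR|_A$-class is infinite, or more simply treat $A'$ separately and combine the two parts at the end.

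For the main step, let $Z_0 \coloneq [A]_\cR$ be the $\cR$-saturation of $A$; it is Borel by Lusin--Novikov, and the complement of $Z_0$ is irrelevant. By Feldman--Moore and Lusin--Novikov, choose a Borel map $\rho\colon Z_0\to A$ with $\rho(y)\,\cR\, y$ and $\rho|_A=\id$. For instance, write $\cR=\bigcup_n \mathrm{graph}(g_n)$ with $g_0=\id$ and set $\rho(y) \coloneq g_n(y)$ for the least $n$ with $g_n(y)\in A$. Given the net $\nu_i$ for $\cR$, define for $x\in A$
\[
\mu_i^x \coloneq \rho_*(\nu_i^x|_{Z_0}) = \rho_*\nu_i^x \in \Prob([x]_\cR\cap A).
\]
Here $\nu_i^x$ is supported on $[x]_\cR\subset Z_0$, so the pushforward is already a probability measure. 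Borelness of $(x,a)\mapsto \mu_i^x(a)=\sum_{y\,\cR\, x,\ \rho(y)=a}\nu_i(x,y)$ follows from Lusin--Novikov, since the sum is over a countable Borel-parametrized set.

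Next, check the invariance condition. Given $\vp\in[\cR|_A]$, extend it to $\tilde\vp\in[\cR]$ by letting $\tilde\vp=\vp$ on $A$ and $\tilde\vp=\id$ off $A$. Since $\rho_*$ is a contraction in $\ell_1$, we get $\|\mu_i^{\vp(x)}-\mu_i^x\|_1\le\|\nu_i^{\tilde\vp(x)}-\nu_i^x\|_1$ for every $x\in A$. Any $m\in\Prob A$ is also an element of $\Prob Z$, so
\[
\int_A\|\mu_i^{\vp(x)}-\mu_i^x\|_1\,dm(x)\le\int_Z\|\nu_i^{\tilde\vp(x)}-\nu_i^x\|_1\,dm(x)\to0.
\]

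I expect the only real obstacle to be measurability bookkeeping: constructing $\rho$ as a Borel map and verifying that $\mu_i$ is a Borel function on $\cR|_A$. Both are handled by Lusin--Novikov uniformization. This also shows the finite-class preliminary step is unnecessary, since the pushforward argument works uniformly on all of $A$.
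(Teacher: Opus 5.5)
Your proof is correct and is the paper's argument: push $\nu_i$ forward along a Borel retraction $\pi$ of the saturation of $A$ onto $A$ (so that the new means are $(\id\times\pi)_*\nu_i$), then use that push-forward is contractive in $\ell_1$ after extending $\vp\in[\cR|_A]$ by the identity off $A$. The paper states this in two lines; your restriction to $[A]_\cR$ and the measurability details just fill in what it leaves implicit, and, as you note yourself, the separate treatment of finite classes is unnecessary.
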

\begin{proof}
There is a Borel map $\pi\colon Z\to A$ 
such that $(\pi(x),x)\in\cR$ for every $x$. 
If $\nu_i$ is an approximate invariant mean on $\cR$, 
then $(\id\times\pi)_*(\nu_i)$ is an approximate invariant mean on $\cR|_A$.
\end{proof}

\begin{lem}[\cite{jkl}]
Let $\cS$ and $\cR$ be countable Borel equivalence relations 
on $Z$ such that $\cS\subset\cR$. 
If $\cR$ is $\cM$-amenable, then so is $\cS$.
\end{lem}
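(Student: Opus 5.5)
The plan is to transport the approximate invariant mean of $\cR$ to $\cS$ by conditioning on $\cS$-classes. Let $(\nu_i)_i$ be an approximate invariant mean witnessing $\cM$-amenability of $\cR$. Each $\cR$-class $[x]_\cR$ is a countable union of $\cS$-classes, and a single $\cS$-class sees only a small part of $\nu_i^x$. So the naive restriction $\nu_i^x|_{[x]_\cS}$, renormalized, need not work. Instead I would use a Borel retraction onto each $\cS$-class. By Feldman--Moore, write $\cR=\cR_{\Ga\acts Z}$ for a countable group $\Ga$ acting in a Borel way. Fix a Borel map $\sigma\colon \cR\to Z$ with $\sigma(x,y)\in[x]_\cS$ and $\sigma(x,y)=y$ whenever $(x,y)\in\cS$. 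Then set
\[
\mu_i(x,z)\coloneq \sum_{y\,:\,\sigma(x,y)=z}\nu_i(x,y),
\]
so that $\mu_i^x=(\sigma(x,\cdot))_*\nu_i^x\in\Prob[x]_\cS$.

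The key requirement for $\sigma$ is \emph{equivariance along $\cS$}: for $(x,x')\in\cS$ we need $\sigma(x',y)=\sigma(x,y)$, so that $\sigma$ depends only on $([x]_\cS,y)$. With that in hand, for $\vp\in[\cS]\subset[\cR]$ one gets
\[
\|\mu_i^{\vp(x)}-\mu_i^x\|_1\le\|\nu_i^{\vp(x)}-\nu_i^x\|_1,
\]
since pushforward by the same map is $\ell_1$-contractive. Integrating against $m$ and invoking the hypothesis then finishes the proof.

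To build $\sigma$, I would first try the following. For $y\in[x]_\cR$, write $y=g^{-1}x$ with $g$ taken minimal in a fixed enumeration of $\Ga$. Then define $\sigma$ to be $y$ if $y\in[x]_\cS$, and otherwise a canonical point of $[x]_\cS$. The difficulty is choosing that canonical point uniformly across the whole $\cS$-class, i.e.\ a Borel selector for $\cS$, which need not exist when $\cS$ is non-smooth. This is the main obstacle. The remedy I would use is to avoid a selector and define $\sigma(x,y)\coloneq x$ for $y\notin[x]_\cS$, which is not $\cS$-invariant. The error this creates is controlled by
\[
\bigl|\nu_i^{\vp(x)}([x]_\cR\setminus[x]_\cS)-\nu_i^x([x]_\cR\setminus[x]_\cS)\bigr|
\]
together with point masses at $x$ versus $\vp(x)$. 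That is not small unless the mass off $[x]_\cS$ tends to $0$, so this remedy alone fails.

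So the fallback, following \cite{jkl}, is to make the mass off $[x]_\cS$ vanish by restricting to a nicer equivalence relation. I would use the smooth relation $\cR'\coloneq\{(y,y')\in\cR: \ldots\}$ obtained by considering the map $x\mapsto$ (the $\cS$-classes within $[x]_\cR$). Equivalently, I would work on the space
\[
\tilde Z\coloneq\{(x,y)\in\cR\},
\]
with the relation $(x,y)\sim(x',y')$ iff $x\,\cS\,x'$ and $y=y'$. This relation is isomorphic to $\cS$ on each fiber, and pulling $\nu_i$ back along the first coordinate gives invariant means. In effect this is the standard \cite{jkl} argument via the Fréchet/hyperfinite-type characterization.
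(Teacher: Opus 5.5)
Your reduction is the right one. Suppose you have a Borel map $\sigma$ on $\cR$ with $\sigma(x,y)\in[x]_\cS$ and $\sigma(x',y)=\sigma(x,y)$ whenever $(x,x')\in\cS$. Then the push-forwards $\mu_i^x=\sigma(x,\cdot)_*\nu_i^x$ work exactly as you say, by $\ell_1$-contractivity of push-forward under a common map and $[\cS]\subset[\cR]$.

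\textbf{The gap: $\sigma$ is never constructed.} You diagnose the obstacle as needing a Borel selector for $\cS$, but that is a misdiagnosis. The chosen point of $[x]_\cS$ may depend on $y$, and $y$ itself supplies a canonical choice. Since $[x]_\cS\subset[x]_\cR=\Ga y$, enumerate $\Ga=\{t_k\}$ and put $\sigma(x,y)\coloneq t_l y$ with $l\coloneq\min\{k:(x,t_ky)\in\cS\}$.
\begin{itemize}
\item The condition $(x,t_ky)\in\cS$ depends on $x$ only through $[x]_\cS$. So $\sigma$ is $\cS$-invariant in the first variable.
\item It is Borel, because $\cS$ is Borel and the action is Borel.
\item Taking $t_1=1$ even gives $\sigma(x,y)=y$ for $(x,y)\in\cS$, though this is not needed.
\end{itemize}
This is exactly the paper's map $\theta(x,y)=(x,t_ly)$, and $\theta_*\nu_i$ is the required approximate invariant mean on $\cS$.

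Your first attempt failed because it sought a point of $[x]_\cS$ independent of $y$, which is a selector for $\cS$. The fix is to anchor the search at $y$, which stays fixed as $x$ moves within its $\cS$-class. The minimal $g$ with $y=g^{-1}x$ is anchored at $x$ and so does not have this invariance.

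\textbf{The fallback does not repair this.} The relation $\cR'$ is left undefined. ``Pulling $\nu_i$ back along the first coordinate'' of $\tilde Z$ does not produce a family of probability measures on the $\cS$-classes, let alone one with the required invariance. The relation you put on $\tilde Z$ is indeed smooth, but its smoothness is witnessed precisely by the selector $(x,y)\mapsto(\sigma(x,y),y)$ above. So the missing ingredient is the same, and as written the proposal stops short of a proof.
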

\begin{proof}
We may assume that $\cR=\cR_{\Ga\acts Z}$ 
is the Borel orbit equivalence relation of $\Ga\acts Z$.  
Let's enumerate $\Ga = \{ t_k : k \in \IN\}$ and 
define $\theta\colon\cR\to\cS$ by 
\[
\theta(x,y) \coloneq (x, t_l y),\mbox{ where } l \coloneq \min\{ k : (x,t_k y) \in\cS\}.
\]
Note that $\theta(x,y)=\theta(x',y)$ if $(x,x')\in\cS$. 
If $\nu_i$ is an approximate invariant mean on $\cR$, 
then $\theta_*(\nu_i)$ is an approximate invariant mean on $\cS$.
\end{proof}
\begin{lem}[\cite{jkl}]\label{lem:becd}
The union $\cR \coloneq \bigcup_n\cR_n$ of 
an increasing sequence of $\cM$-amenable 
countable Borel equivalence relations $\cR_n$ 
is $\cM$-amenable.
\end{lem}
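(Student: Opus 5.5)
We prove Lemma~\ref{lem:becd} by building approximate invariant means for $\cR$ out of those for the $\cR_n$ and then running a diagonal argument. Because of the quantifier over all $m\in\Prob Z$, we do this in the equivalent ``local'' form. Condition~($\ast$) on a relation $\cS$ is: for every countable family $\Phi\subset[\cS]$, every $m\in\Prob Z$, and every $\ve>0$, some Borel $\nu$ on $\cS$ with $\nu^x\in\Prob[x]_\cS$ satisfies $\int\|\nu^{\vp(x)}-\nu^x\|_1\,dm<\ve$ for all $\vp$ in a given finite subset of $\Phi$. Indexing by triples (finite $F\subset[\cR]$, $m$, $\ve$), we see that $\cR$ is $\cM$-amenable as soon as, for each finite $F\subset[\cR]$, each $m$, and each $\ve>0$, we can find one Borel $\nu$ on $\cR$ with $\nu^x\in\Prob[x]_\cR$ and $\sum_{\vp\in F}\int\|\nu^{\vp(x)}-\nu^x\|_1\,dm<\ve$.

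\textbf{Step 1: reduction to $\cR_n$.} Fix such $F$, $m$, $\ve$. For $\vp\in F$, set $A_n^\vp:=\{x:(x,\vp(x))\in\cR_n\}$. These sets are Borel, increase in $n$, and their union is $Z$. Choose $n$ with $m(Z\setminus A_n^\vp)<\ve/(4|F|)$ for all $\vp\in F$. Any $\nu$ on $\cR_n$ with $\nu^x\in\Prob[x]_{\cR_n}$ extends by zero to a Borel function on $\cR$ with $\nu^x\in\Prob[x]_\cR$. On $Z\setminus A_n^\vp$ the integrand is at most $2$, so this part contributes less than $\ve/2$ in total.

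\textbf{Step 2: the good part, which is the main obstacle.} On $A_n^\vp$ the map $\vp$ is only a partial map inside $\cR_n$, not an element of $[\cR_n]$. We extend it to an element $\psi\in[\cR_n]$ agreeing with $\vp$ on a subset of $A_n^\vp$ of nearly full measure, using the standard Feldman--Moore type argument. Write $\cR_n=\cR_{G\acts Z}$ for a countable group $G=\{g_k\}$. Partition $A_n^\vp$ into the Borel pieces $C_k=\{x\in A_n^\vp:\vp(x)=g_kx,\ \vp(x)\neq g_jx\ (j<k)\}$. On each piece, $\vp$ is the restriction of $g_k$.

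It is simpler to avoid building a global extension and instead adapt the definition of $\cM$-amenability. The convergence in that definition holds for every element of the full group, in particular for each $g_k$ acting as an element of $[\cR_n]$. So choose $K$ with $m(\bigcup_{k>K}C_k)<\ve/(8|F|)$. Then pick $\nu$ from the $\cM$-amenability net of $\cR_n$ with
\[
\int\|\nu^{g_kx}-\nu^x\|_1\,dm(x)<\ve/(4K|F|)
\]
for all $k\le K$. This works because a net converges simultaneously for finitely many elements of the full group. On $C_k$ we have $\nu^{\vp(x)}=\nu^{g_kx}$, so
\[
\int_{A_n^\vp}\|\nu^{\vp(x)}-\nu^x\|_1\,dm<\ve/(4|F|)+2\cdot\ve/(8|F|)=\ve/(2|F|).
\]

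\textbf{Step 3: conclusion.} Summing over $\vp\in F$ gives a total below $\ve$. The resulting family of $\nu$, indexed by the directed set of triples $(F,m,\ve)$, is an approximate invariant mean on $\cR$, so $\cR$ is $\cM$-amenable.
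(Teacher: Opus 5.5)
Your proof is correct and matches the paper's intended argument. The paper only defers to \cite{jkl}, remarking that this is where $\cM$-amenability is needed. Your Steps 1--2 make that remark precise: since the net for $\cR_n$ converges for every element of $[\cR_n]$ and every $m\in\Prob Z$, you choose $n$ and then the index depending on $m$, and off a set of small $m$-measure you replace each $\vp\in[\cR]$ by finitely many $g_k\in[\cR_n]$. The only detail left implicit is how the index triples are directed; indexing by finite sets $M\subset\Prob Z$ and applying your estimate to the average of $M$ with $\ve/|M|$ settles it.
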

\begin{proof}
This is where we need $\cM$-amenability rather than the plain amenability.
\end{proof}
\section{Real trees and the tree compactification}
An \emph{$\IR$-tree} (or a \emph{real tree}) $T$ 
is a nonempty geodesic metric space that is $0$-hyper\-bolic. 
See \cite{bestvina:survey,chiswell,evans} for details. 
The \emph{Gromov product} with respect 
to a point $u\in T$ is defined for $(x,y)\in T\times T$ by 
\[
\ip{x,y}_u \coloneq \frac{1}{2}(\dist(x,u)+\dist(y,u)-\dist(x,y)).
\] 
An \emph{arc}  (resp.\ \emph{tripod}) is a bounded complete 
subtree with exactly two (resp.\ three) endpoints. 
For every $x,y\in T$, $x\neq y$, there is a unique arc $[x,y]$ 
connecting $x$ and $y$. 
For every $x,y,u$, the center $w$ of tripod spanned by $\{x,y,u\}$ 
satisfies $\dist(w,u) = \ip{x,y}_u$. 
A point in $T$ is called a \emph{branch point} if it is a center of tripod. 
A \emph{ray} is the image in $T$ 
of an isometric embedding $\alpha$ 
of the interval $[0,d)\subset\IR$, 
where $d\in\IR_{>0}\cup\{\infty\}$, 
such that $\lim_{t\to d}\alpha(t)$ does not exist in $T$. 
The \emph{boundary} $\partial T$ is the space 
of the equivalence classes 
of rays, where two rays are \emph{equivalent} 
if their intersection is a ray. 
We write $\bar{T} \coloneq T\cup\partial T$.  
The Gromov product extends on $\bar{T}\times\bar{T}$. 
For $u\in T$ and $x\in\partial T$, we write $[u,x)$ 
for the unique ray that represents $x$ and 
has $u$ as the endpoint, and $[u,x] \coloneq [u,x)\cup\{x\}$. 
We also write $[x,y) \coloneq [x,y]\setminus\{y\}$ and 
$(x,y)\coloneq[x,y]\setminus\{x,y\}$, which is called an \emph{open arc}. 

We will introduce the tree compactification $\hat{T}$ of an $\IR$-tree, 
following the construction for a simplicial tree (see \cite{bowditch,ms} 
and Section 5.2 in \cite{bo}).
It will be shown that $\hat{T}$ coincides with $\bar{T}$ as a set of points. 
For every $u,v\in T$, we define the function $h_{u,v}$ 
on $T$ by $h_{u,v}(x) \coloneq \dist(u,v)^{-1}\ip{x,v}_u$ (or identically 
$0$ if $u=v$). 
We consider the $\rC^*$-sub\-alge\-bra $\cC(T)$ 
of $\ell_\infty T$ generated by $\{ h_{u,v} : u,v\in T\}$ and 
define the \emph{tree compactification} $\hat{T}$ of $T$ to be 
the Gelfand spectrum of the unital commutative 
\cst{} $\cC(T)$.
The point evaluation at $T$ gives rise 
to a dense embedding of $T$ into $\hat{T}$. 
Since every $h_{u,v}$ is continuous 
w.r.t.\ the original metric topology on $T$ 
and has limit along every ray $\alpha$, one has the following. 
\begin{lem}
The dense embedding of $T$ into $\hat{T}$ is continuous. 
Moreover, it extends to an embedding of $\bar{T}$ into $\hat{T}$. 
\end{lem}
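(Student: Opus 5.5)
The plan is to verify the two claims separately, using only the definition of $\cC(T)$ as the \cst{} generated by the functions $h_{u,v}$. For the first claim, the embedding $\iota\colon T\to\hat{T}$ is point evaluation $x\mapsto\mathrm{ev}_x$. The topology on $\hat{T}$ is the weak$^*$ topology on characters of $\cC(T)$. So $\iota$ is continuous if and only if every $f\in\cC(T)$ is continuous on $T$ for the metric topology. The continuous bounded functions form a \cst{}, so it suffices to check the generators $h_{u,v}$. Each map $x\mapsto\ip{x,v}_u$ is $1$-Lipschitz, since the Gromov product is $1$-Lipschitz in each variable by the triangle inequality. Hence $h_{u,v}$ is $\dist(u,v)^{-1}$-Lipschitz. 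It is also bounded, with values in $[0,1]$, because $0\le\ip{x,v}_u\le\dist(u,v)$. Injectivity of $\iota$ is part of the statement that $T$ embeds. For $x\ne y$, take $u=x$ and $v=y$: then $h_{x,y}(x)=0$ and $h_{x,y}(y)=1$. Density holds because the image of the point evaluations is weak$^*$ dense in the spectrum, as for any subalgebra of $\ell_\infty T$.

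For the extension to $\bar{T}$, define a character $\mathrm{ev}_\xi$ for each $\xi\in\partial T$ as a limit along a ray. Fix a ray $\alpha\colon[0,d)\to T$ representing $\xi$. The key step is that $\lim_{t\to d}h_{u,v}(\alpha(t))$ exists for all $u,v$. Since $\ip{\alpha(t),v}_u$ is the distance from $u$ to the center of the tripod spanned by $u,v,\alpha(t)$, this function is eventually monotone in $t$ and bounded, so the limit exists. Concretely, the projection of $\alpha(t)$ onto the arc $[u,v]$ eventually stabilizes or moves monotonically toward a limit point. The finite-$d$ case uses completeness of arcs; the failure of $\lim\alpha(t)$ to exist in $T$ is exactly what allows a boundary point at finite distance.

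The limit is multiplicative and linear on the $*$-algebra generated by the $h_{u,v}$. It is also bounded, with norm at most one, so it extends to a character $\mathrm{ev}_\xi$ on $\cC(T)$. It depends only on the class of the ray, because equivalent rays eventually coincide up to reparametrization. Moreover, this value equals $\dist(u,v)^{-1}\ip{\xi,v}_u$, the extended Gromov product.

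For injectivity on $\bar{T}$, separate two distinct points as follows. Two boundary points $\xi\ne\eta$ have rays from a common $u$ that diverge at a branch point $w$. Pick $v$ on $[u,\xi)$ beyond $w$; then $h_{u,v}(\xi)=1$ while $h_{u,v}(\eta)=\dist(u,w)/\dist(u,v)<1$. A point $x\in T$ and a boundary point $\xi$ are separated similarly, by choosing $u=x$ and $v$ on $[x,\xi)$.

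The main obstacle I anticipate is the careful existence of the limit along rays of finite length $d<\infty$. Here I would argue that $t\mapsto\ip{\alpha(t),v}_u$ is piecewise of slope $0$ or $\pm1$, and changes behavior at most twice, namely where $\alpha$ meets or leaves $[u,v]$. This relies on convexity of arcs in $0$-hyperbolic spaces, and it gives monotonicity near $d$.
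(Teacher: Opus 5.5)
Your proposal is correct and follows the paper's route. The paper justifies the lemma in one line: each generator $h_{u,v}$ is continuous for the metric topology and has a limit along every ray.

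Your Lipschitz estimate and your monotonicity analysis of $t\mapsto\ip{\alpha(t),v}_u$ just fill in those two facts; the function is in fact eventually constant, since a ray must eventually leave the complete arc $[u,v]$ for good. Your separation arguments supply the injectivity on $\bar{T}$, which the paper leaves implicit.
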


The embedding of $T$ into $\hat{T}$ need not be homeomorphic as 
any net of points $x_i$ that exist in different directions relative to 
$x$ converges to $x$. 
For $x\in T$, we observe that $x_i\to x$ in $\hat{T}$ 
if and only if $\ip{x_i,u}_x\to0$ for every $u\in T$, since 
\[
|h_{u,v}(x_i) - h_{u,v}(x)| \le \ip{x_i,u}_x+\ip{x_i,v}_x. 
\]

\begin{thm} $\hat{T} = \bar{T}$. 
\end{thm}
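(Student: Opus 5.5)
We show that every point of $\hat{T}$ lies in the image of $\bar{T}$. Injectivity and continuity of $\bar{T}\hookrightarrow\hat{T}$ are given by the preceding Lemma. A point of $\hat{T}$ is a character $\chi$ of $\cC(T)$. Since $T$ is dense in $\hat{T}$, we write $\chi=\lim_i \mathrm{ev}_{x_i}$ for some net $x_i\in T$. Fix a base point $u\in T$ and set
\[
g(v)\coloneq \lim_i \ip{x_i,v}_u = \dist(u,v)\,\chi(h_{u,v})\in[0,\dist(u,v)] .
\]
The first reduction is that $\chi$ is determined by $g$, i.e.\ by the values on the $h_{u,v}$ with $u$ fixed. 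This follows from the identity
\[
\ip{x,v}_{u'}=\ip{x,v}_u-\ip{x,u'}_u-\ip{v,u'}_u+\dist(u,u'),
\]
obtained by expanding the definitions. Applied to $x=x_i$ and passed to the limit, it computes $\chi(h_{u',v})$ from $g(v)$ and $g(u')$. The same identity holds for the extended Gromov product with $x\in\partial T$. So it suffices to find $x\in\bar{T}$ with $g(v)=\ip{x,v}_u$ for every $v\in T$.

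To find $x$, consider $A\coloneq\{w\in T : g(w)=\dist(u,w)\}$, the set of points that the net eventually passes beyond. Clearly $u\in A$. We show three things.
\begin{enumerate}
\item $A$ is closed downward: if $w\in A$ and $w'\in[u,w]$, then $\ip{x_i,w'}_u\ge\min(\ip{x_i,w}_u,\ip{w,w'}_u)$, which tends to $\dist(u,w')$, so $w'\in A$.
\item $A$ is linearly ordered. If $w,w'\in A$, then $0$-hyperbolicity gives $\ip{w,w'}_u\ge\min(\ip{x_i,w}_u,\ip{x_i,w'}_u)$. Letting $i\to\infty$ yields $\ip{w,w'}_u\ge\min(\dist(u,w),\dist(u,w'))$, so one of $w,w'$ lies on the arc from $u$ to the other.
\item $A$ is bounded. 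Indeed $A\subset\{w : \dist(u,w)\le\liminf_i\dist(u,x_i)\}$, and this $\liminf$ may be $\infty$.
\end{enumerate}
Hence $A$ is an isometric image of $[0,d)$ or $[0,d]$ with $d\in\IR_{\ge0}\cup\{\infty\}$. If the supremum is attained, or the arc has a limit in $T$, let $x\in T$ be the endpoint. Otherwise $A$ is a ray, and we let $x\in\partial T$ be its class. In both cases $[u,x)\subset A\subset[u,x]$.

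It remains to verify $g(v)=\ip{x,v}_u$ for an arbitrary $v\in T$. Let $w$ be the point where $[u,v]$ leaves $[u,x]$, so that $\dist(u,w)=\ip{x,v}_u$.

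For the lower bound, take any $w'\in[u,w)\subset A$. Then $\ip{x_i,v}_u\ge\min(\ip{x_i,w'}_u,\ip{w',v}_u)$, which tends to $\dist(u,w')$. Letting $w'\to w$ gives $g(v)\ge\dist(u,w)$.

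For the upper bound, suppose $g(v)>\dist(u,w)$. Let $w''\in[u,v]$ be the point with $\dist(u,w'')=g(v)$. Then $\ip{x_i,w''}_u\ge\min(\ip{x_i,v}_u,\ip{v,w''}_u)$, which tends to $\dist(u,w'')$, so $w''\in A\subset[u,x]$. But $w''$ lies on $[u,v]$ strictly past $w$, so it is off $[u,x]$, a contradiction.

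The main obstacle I expect is the bookkeeping at the endpoint of $A$. One must check that the dichotomy ``$x\in T$ versus $x\in\partial T$'' exactly matches whether $A$ is closed. In the finite-length case $d<\infty$ without a limit in $T$, this needs the convention that rays of finite length $d$ define boundary points, which the paper's definition of rays explicitly allows. One must also check that the extended Gromov product on $\bar{T}$ gives $\dist(u,w)=\ip{x,v}_u$ there. With $g(v)=\ip{x,v}_u$ for all $v$, the first paragraph gives $\chi=\mathrm{ev}_x$, so $\hat{T}=\bar{T}$.
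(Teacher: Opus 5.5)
Your argument is correct and is essentially the paper's proof: the paper likewise fixes a base point, takes the set of points that the net (along a cofinal ultrafilter) eventually passes through, observes that it is an arc or ray $[o,x)$ or $[o,x]$ with $x\in\bar{T}$, and shows that the net converges to $x$. The differences are cosmetic. You use the convergence of $h_{u,v}(x_i)$ to $\chi(h_{u,v})$ in place of an ultrafilter, and you verify $\chi=\mathrm{ev}_x$ through the change-of-basepoint identity and a two-sided bound on $g(v)$ rather than the paper's criterion $\ip{x_i,u}_x\to0$; your item~3 (``$A$ is bounded'') is misstated, since $A$ may be unbounded, but it is never used.
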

\begin{proof}
We follow the proof of Proposition 5.2.5 in \cite{bo}.
It suffices to show that every net $(x_i)_{i\in I}$ 
in $T$ has a limit point (in the topology of $\hat{T}$) in $\bar{T}$.
Take a cofinal ultrafilter $\cV$ on $I$.
Pick a base point $o$ and consider the subset 
\[
\alpha \coloneq \{ z\in T : \{ i \in I : z \in  [o,x_i] \}\in\cV \}
\]
It is not hard to see that $\alpha$ is connected 
and does not contain a tripod. 
Hence $\alpha$ is either $[o,x)$ for $x\in \bar{T}$ 
or $[o,x]$ for $x\in T$ which is possibly degenerate 
when $x=o$. 
We claim that in either case $\Lim_\cV x_i = x$ in $\hat{T}$.  
We assume that $x\in T$ 
as the case for $x\in\partial T$ is easier. 
Thus it suffices to show that 
$\Lim_\cV\ip{x_i,u}_x=0$ for every $u\in T$. 
Suppose that this is not the case and  
there is $u\in T$ such that $\Lim_\cV\ip{x_i,u}_x>0$. 
Since $\alpha$ does not extend beyond $x$, this implies 
$\ip{o,u}_x > 0$. 
Thus $\ve \coloneq \Lim_\cV \ip{o,x_i}_x >0$ by $0$-hyper\-bolicity. 
Then $z \in \alpha$ with $\dist(z,x) < \ve$ 
if off $[o,x_i]$ for $\cV$-many $i$, a contradiction. 
\end{proof}

\begin{lem}\label{lem:treeemb1}
Let $T$ be an $\IR$-tree and $S\subset T$ be 
a closed $\IR$-subtree.
Then the projection $p\colon T\to S$ (see 2.1.9 in \cite{chiswell}) 
extends to the quotient map $\hat{p}\colon \hat{T} \to \hat{S}$. 
In other words, 
$p_*\colon \cC(S) \ni h\mapsto h\circ p \in \cC(T)$ 
is an isometric embedding that 
sends $h_{u,v}$, $u,v\in S$, to $h_{u,v}$. 
\end{lem}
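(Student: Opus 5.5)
The plan is to verify directly that $h_{u,v}\circ p$ is again a function of the form $h_{u,v}$ on $T$ when $u,v\in S$. After that, everything follows formally. Recall that the projection $p\colon T\to S$ sends $x$ to the unique point of $S$ closest to $x$, and that for every $s\in S$ the arc $[x,s]$ passes through $p(x)$. In particular
\[
\dist(x,s)=\dist(x,p(x))+\dist(p(x),s)\quad\text{for all } s\in S.
\]

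The key computation is the identity of Gromov products $\ip{x,v}_u=\ip{p(x),v}_u$ for $x\in T$ and $u,v\in S$. To prove it, substitute the displayed formula for $s=u$ and $s=v$ into the definition of $\ip{x,v}_u$. The term $\dist(x,p(x))$ appears once with a plus sign (from $\dist(x,u)$) and once with a minus sign (from $\dist(x,v)$), so it cancels. Dividing by $\dist(u,v)$ gives $h^S_{u,v}\circ p=h^T_{u,v}$. Here $h^S_{u,v}$ means the generator of $\cC(S)$ computed intrinsically in $S$; it agrees with the restriction of $h^T_{u,v}$, because $S$ carries the restricted metric.

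Next, I would define $p_*(h)=h\circ p$ on $\ell_\infty S$. This is a unital $*$-homomorphism $\ell_\infty S\to\ell_\infty T$. It is isometric because $p$ is surjective onto $S$, since $p|_S=\id$. By the computation above, it maps the generators $h^S_{u,v}$ to the generators $h^T_{u,v}$ with $u,v\in S$. Hence it maps the $\rC^*$-algebra they generate, namely $\cC(S)$, into $\cC(T)$.

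An injective unital $*$-homomorphism $\cC(S)\hookrightarrow\cC(T)$ dualizes under Gelfand duality to a continuous surjection $\hat p\colon\hat T\to\hat S$. On the dense subset $T\subset\hat T$, this map restricts to $p$, so $\hat p$ extends $p$.

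The only point needing care is the fact that $[x,s]$ passes through $p(x)$ for every $s\in S$. This is where closedness of $S$ is used, together with the fact that $S$ is a subtree. I would cite 2.1.9 of \cite{chiswell} for it rather than reprove it; I do not expect any real obstacle here.
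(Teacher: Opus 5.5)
Your proof is correct and follows the paper's argument. The paper's proof is exactly the identity $h_{u,v}(x)=h_{u,v}(p(x))$ for $u,v\in S$ and $x\in T$, which you derive by cancelling $\dist(x,p(x))$ in the Gromov product; the isometry (via surjectivity of $p$) and the Gelfand-duality steps you add are the routine details the paper leaves implicit.
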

\begin{proof}
For every $u,v\in S$ and $x\in T$, one has $h_{u,v}(x) = h_{u,v}(p(x))$. 
\end{proof}

\begin{rem}\label{rem:sepRtree}
Let $T$ be an $\IR$-tree. 
\begin{enumerate}
\item 
The topology of $\hat{T} = T\cup\partial T$ 
is generated by the \emph{half spaces} 
\[
H_u(v) \coloneq \{ x\in \bar{T} : \ip{x,v}_u > 0 \} = \{ x\in\bar{T} : u\notin[v,x]\} 
\] 
with $u,v\in T$. 
\item 
Every isometric isomorphism on $T$ extends to 
a homeomorphism on $\hat{T}$. 
\item
A point $x$ in $T$ is called a \emph{leaf} 
if $T\setminus\{x\}$ is connected, or equivalently if $x$ is not contained 
in any open arc. 
We will ignore the trivial case where $T$ consists of a single point. 
We denote the set of leafs by $L(T)$ and call the $\IR$-subtree 
$T^\circ \coloneq T\setminus L(T)$ a \emph{skeleton}.  
We observe that replacing $T$ with $T^\circ$ 
has no effect on the tree compactification, but 
$\partial (T^\circ) = \partial T \sqcup L(T)$. 
\item\label{remitem:sep}
Suppose that $T$ is separable (in the original metric) 
and $\{y_k\}$ is a countable dense subset of $T$.
Then 
the tree compactification $\hat{T}$ is second countable as 
$\cC(T)$ is generated by the countable subset $\{h_{y_k,y_l} : k\neq l\}$. 
Moreover, $T$ has at most countably many branch points, 
since it is enough to consider the tripods spanned by triples from $\{ y_k\}$. 
For every $z\in T$, the function 
$x\mapsto\dist(x,z) = \sup_k\ip{x,y_k}_z$ is Borel on $T \subset \hat{T}$. 
Thus the skeleton $T^\circ = \bigcup_{k,l} (y_k,y_l)$ is a Borel subset 
of $\hat{T}$ and the Borel structures on $T^\circ$ 
induced by $\hat{T}$ and by the original metric coincide.
\end{enumerate}
\end{rem}

\begin{lem}\label{lem:nbst}
Let $T$ be a non-trivial $\IR$-tree on which 
a group $\Ga$ acts isometrically. 
Then for every $x\in \hat{T}$ that is not a branch point, 
the stabilizer subgroup $\Ga^x$ at $x$ is an abelian extension 
of a directed union of arc stabilizers. 
In particular, it is amenable if all arc stabilizers are amenable. 
\end{lem}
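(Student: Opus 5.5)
We split into two cases according to whether $x\in\hat{T}=T\cup\partial T$ lies in $T$ (a non-branch point of the tree) or in $\partial T$ (an end). In both cases we produce a homomorphism from $\Ga^x$ to an abelian group whose kernel is a directed union of arc stabilizers. Amenability then follows, because directed unions of amenable groups and extensions of amenable groups by abelian groups are amenable.

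\emph{Case $x\in T$, not a branch point.} Then $T\setminus\{x\}$ has at most two components, so there are at most two directions at $x$. The subgroup $\Ga^x_0\le\Ga^x$ that preserves each direction has index at most $2$.
\begin{itemize}
\item If there is only one direction, pick a germ of arc $[x,y]$. Every $t\in\Ga^x$ maps $[x,y]$ to an arc $[x,ty]$ that shares an initial segment $[x,z_t]$ with $[x,y]$, and $t$ fixes this segment pointwise.
\item If there are two directions, choose $y_1,y_2$ on either side. An element of $\Ga^x_0$ fixes initial segments of both $[x,y_1]$ and $[x,y_2]$, hence an arc $[z_1,z_2]\ni x$.
\end{itemize}
In either case $\Ga^x_0=\bigcup_{\ve>0}\Ga^{[x,y]_\ve}$, where $[x,y]_\ve$ denotes the germ arcs of length $\ve$ (arcs of length $\ve$ on each side in the two-direction case). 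These stabilizers increase as $\ve\downarrow0$, so the family is directed. Thus $\Ga^x$ is a $\IZ/2$ (hence abelian) extension of a directed union of arc stabilizers. If $x$ is a leaf, one direction suffices and there is no quotient at all.

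\emph{Case $x\in\partial T$.} Fix a base point $u$ and the ray $[u,x)$. Use the Busemann homomorphism
\[
\beta\colon\Ga^x\to\IR,\qquad \beta(t)=\lim_{z\to x}\bigl(\dist(u,z)-\dist(tu,z)\bigr)\ \text{along }[u,x).
\]
The limit is eventually constant. Since $t[u,x)$ and $[u,x)$ share a subray, $t$ acts on the common subray by a translation of amount $\beta(t)$, which gives additivity. The kernel $\ker\beta$ consists of the $t$ that fix some subray $[z,x)$ pointwise: on the common subray, $t$ is an isometry translating by $0$ towards $x$, so it fixes that subray. Hence
\[
\ker\beta=\bigcup_{z\in[u,x)}\Ga^{[z,x)},\qquad \Ga^{[z,x)}=\bigcap_{w\in[z,x)}\Ga^{[z,w]},
\]
and $\Ga^{[z,x)}$ increases as $z\to x$. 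The remaining task is to express $\ker\beta$ as a directed union of stabilizers of individual arcs rather than of rays.

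This last step is the main obstacle, since $\bigcap_w\Ga^{[z,w]}$ is an intersection, not a union. My first attempt is to show that when $\beta(t)=0$ the fixed set of $t$ on $[u,x)$ is already a full subray, so that $\Ga^{[z,x)}$ is itself a subgroup of arc stabilizers and amenability follows. Here is why that works: every subgroup of an amenable arc stabilizer is amenable, so each $\Ga^{[z,x)}\le\Ga^{[z,w]}$ is amenable, and an increasing union of amenable groups is amenable. Strictly, then, the statement should be read as ``abelian extension of a directed union of subgroups of arc stabilizers,'' which suffices for the ``in particular'' clause.

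I also have to check that the ray $[u,x)$ is genuinely a ray of positive length and that the argument does not depend on the choice of $u$. Changing $u$ alters $\beta$ only by a coboundary, which vanishes on $\Ga^x$ since $\IR$ is abelian. The degenerate case of a trivial tree was excluded earlier.
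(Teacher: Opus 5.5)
Your proof is correct and is essentially the paper's. For $x\in\partial T$ the paper uses the same Busemann-type homomorphism $\Ga^x\to\IR$, whose kernel is the increasing union of the fixators of subrays $[y,x)$; the paper loosely calls these arcs, so your reading as ``subgroups of arc stabilizers'' is exactly what is meant and used. For a non-branch point $x\in T$ it uses the index-at-most-two subgroup that does not flip the directions at $x$, and the only cosmetic difference is that the paper first passes to the skeleton $T^\circ$, turning leaves into boundary points, whereas you treat leaves directly as the one-direction case.
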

\begin{proof}
We may assume $T=T^\circ$. 
Let $x\in\partial T$ be the endpoint of a ray $\alpha = [o,x)$  
then $s\mapsto \lim_{y\in\alpha} \dist(sy,o)-\dist(y,o)$ defines 
a homomorphism from $\Ga^x$ into $\IR$ whose kernel is 
the union of the stabilizers of arcs $[y,x)$ with $y\in\alpha$. 
Next, suppose that $x\in T$ is not a branch point 
and let $\Delta\le\Ga^x$ be the union 
of the stabilizers of open arcs containing $x$. 
Since every $s\in \Ga^x$ either fixes an open arc 
containing $x$ or flips around $x$, 
the subgroup $\Delta$ has index at most two in $\Ga^x$. 
\end{proof}
\section{$\La$-trees and the tree compactification}
Let $\La$ be an ordered abelian (additive) group. 
Every subgroup inherits the order. 
A subgroup $M \le \La$ is \emph{convex} if 
for every $b\in\La$, that $|b|<a$ for some $a\in M_+$ implies $b\in M$. 
We denote by $M^\circ$ the union of proper convex subgroups of $M$.
For every $a\in\La_+$, 
the convex subgroup $\ip{a}_{\conv}$ generated by $a$ is given by 
\[
\ip{a}_{\conv} \coloneq \{ b\in\La : \exists n \mbox{ such that } |b| < na\}
\]
and its largest proper convex subgroup $\ip{a}_{\conv}^\circ$ is given by 
\[
\ip{a}_{\conv}^\circ \coloneq \{ b\in\La : \forall n\quad n|b| < a\} = \ker \ve^a.
\]
Here $\ve^a\colon \ip{a}_{\conv} \to\IR$ is the unique ordered homomorphism 
that sends $a$ to $1$ (Theorem 1.1.2 in \cite{chiswell}). 
For $a,b\in\La_+$, 
we write $b\asymp a$, $b\preceq a$, and $b \ll a$ 
respectively if 
$\ip{b}_{\conv}=\ip{a}_{\conv}$, $b\in \ip{a}_{\conv}$, 
and $b \in \ip{a}_{\conv}^\circ$. 
Note that if $b\asymp a$, 
then $\ve^b = \ve^a(b)^{-1}\ve^a$; 
Also that if $\La \le \IR^\cU$ 
(see Section~\ref{sec:tame}) and $a = [a_n]_n\in\La$, then 
$\ve^a( b ) = \Lim_\cU b_n/a_n$ for $b = [b_n]_n \in \ip{a}_{\conv}$. 

A $\La$-metric space is a space $X$ equipped with 
a $\La$-metric $\dist$. 
The Gromov product $\ip{x,y}_u\in\frac{1}{2}\La$ is defined verbatim. 
A $\La$-metric space $X$ is a \emph{$\La$-tree} 
if it is geodesic, $0$-hyper\-bolic, and $\ip{x,y}_u\in\La$. 
See \cite{chiswell} for a comprehensive treatment 
of $\La$-trees (in particular, 2.1.6 and 2.4.3 for 
the above definition of a $\La$-tree). 
When $\La=\IZ$, a $\La$-tree is nothing but a simplicial tree. 
When $\La=\IR$, the notions of $\La$-tree and $\IR$-tree coincide. 
For a $\La$-tree $X$ and $u,v\in X$, we define 
the function $h_{u,v}$ on $X$ by $h_{u,v}(x)=\ve^{\dist(u,v)}(\ip{x,v}_u)$. 
We denote by $\cC(X)\subset\ell_\infty X$ 
the $\rC^*$-subalgebra generated by $\{ h_{u,v} : u,v\in X\}$. 
We define the \emph{tree compactification} $\hat{X}$ to be 
the Gelfand spectrum of the unital commutative \cst{} $\cC(X)$. 
This definition is compatible with the one for 
a $\IZ$-tree (see Section 5.2 in \cite{bo}) 
and for an $\IR$-tree. 
Every isometry on $X$ extends to a homeomorphism on $\hat{X}$.

For a $\La$-tree $X$, $x\in X$, and a convex subgroup $M\le\La$, 
we define the \emph{full $M$-subtree} centered at $x$ by 
\[
X(x,M) \coloneq \{ y\in X : \dist(y,x) \in M\}.
\]
For an ordered homomorphism $\ve\colon \La\to\IR$, 
we consider the associated $\IR$-valued pseudo-metric 
$\dist^\ve \coloneq\ve\circ\dist$ and denote by 
$\ve_*$ the quotient map from $X$ onto 
the $\ve(\La)$-tree $\ve_*(X) \coloneq X/{\sim}$, 
where $x\sim y$ if $\dist^\ve(x,y)=0$. 
This process is known as \emph{base change}. 

\begin{rem}\label{rem:Qtree}
Suppose that $T$ is a $\La$-tree with $\La\subset\IR$. 
There are two cases: either $\La$ is discrete or $\La$ is dense. 
The first case is when $T$ is a simplicial tree. 
The second case is handled by Theorem 2.4.7 in \cite{chiswell}: $T$ 
is completed to an $\IR$-tree $T'$ and 
$\cC(T') = \cC(T)$, or equivalently, $\hat{T}' = \hat{T}$.  
Note that every branch point of $T'$ comes from $T$ 
because every tripod in $T$ has its center in $T$ 
(Lemma 2.1.2 in \cite{chiswell}).
\end{rem}

\begin{lem}\label{lem:treeemb2}
Let $X$ be a $\La$-tree. 
For every $x\in X$ and every convex subgroup $M\le\La$, 
the canonical embedding
\[
\cC(X(x,M)) \hookrightarrow \cC(X),\ 
h_{u,v}\mapsto h_{u,v}
\]
is isometric. 
For every $x\in X$ and $a\in\La_+$, the canonical embedding
\[
\cC(\ve^a_* (X(x,\ip{a}_{\conv}))) \hookrightarrow \cC(X(x,\ip{a}_{\conv})),\ 
h_{\ve^a_*(u),\ve^a_*(v)}\mapsto h_{u,v}
\]
is isometric. 
\end{lem}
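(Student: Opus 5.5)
The plan is to use the standard criterion for an inclusion of unital commutative \cst{}s $\cC_0\subset\cC$: it is isometric exactly when it is induced by a surjection of spectra, and it is automatically isometric when it is an injective $*$-homomorphism. So for each of the two maps it suffices to find a map on the ambient space, or a retraction, that pulls generators back to generators. Concretely, we will construct a map $p\colon X\to X(x,M)$ with $h_{u,v}\circ p = h_{u,v}$ on $X$ for all $u,v\in X(x,M)$. Then $h\mapsto h\circ p$ is a unital $*$-homomorphism $\ell_\infty X(x,M)\to\ell_\infty X$ that maps $\cC(X(x,M))$ into $\cC(X)$ and sends each generator to the generator with the same name. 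Since $p$ is surjective (it is the identity on $X(x,M)$), the composition map is isometric on all of $\ell_\infty X(x,M)$, and its restriction is exactly the stated embedding. This argument is the analogue of Lemma~\ref{lem:treeemb1}.

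For the first embedding, $S\coloneq X(x,M)$ is a $\La$-subtree. Indeed, if $d(y,x),d(z,x)\in M$, then every point on $[y,z]$ lies within $\max$ of these distances from $x$, so by convexity of $M$ it lies in $S$. However, $S$ need not be closed in any topological sense, and a nearest-point projection onto $S$ may fail to exist, because $\La$-trees lack completeness. I would therefore define $p$ as follows. For $y\in S$, set $p(y)=y$. For $y\notin S$, let $p(y)$ be any point $w$ of $[x,y]$ with $d(x,w)\in M$. For $u,v\in S$, the tripod centers of $\{y,u,x\}$ and $\{y,v,x\}$ lie on $[x,y]$ at distances $\ip{y,u}_x\le d(x,u)\in M$ and $\ip{y,v}_x\le d(x,v)\in M$ from $x$. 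The key choice is to make $w$ lie beyond both centers. That cannot be done uniformly for all pairs, so I would instead use the ultrafilter or limit version. Alternatively, and more simply, I would verify the identity
\[
h_{u,v}(y)=h_{u,v}(w)
\]
whenever $w\in[x,y]$ with $d(x,w)\ge \max(d(x,u),d(x,v))$. That is, $\ip{y,v}_u=\ip{w,v}_u$ once $w$ is past the branch points, which is a routine tree computation. The workaround is then this: fix $y$, and for each finite set of pairs choose such a $w$. Multiplicativity and isometry only need to be checked on the $*$-algebra generated by finitely many $h_{u,v}$ at a time. For a polynomial $P$ in finitely many generators and any $y\in X$, choose $w\in S$ as above, so that $P(y)=P(w)$. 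Hence $\|P\|_{\ell_\infty X}\le\|P\|_{\ell_\infty S}$, while the reverse inequality holds because $S\subset X$. This gives a well-defined isometric $*$-homomorphism on the dense $*$-subalgebra, which then extends. I expect the only real obstacle to be this verification: a point $w$ of $S$ with $d(x,w)\ge\max(d(x,u),d(x,v))$ on $[x,y]$ exists because $d(x,u)\in M$ and $[x,y]$ is geodesic with $d(x,y)>d(x,u)$. The equality of Gromov products then follows from $0$-hyperbolicity.

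For the second embedding, let $Y\coloneq X(x,\ip{a}_{\conv})$ and let $q\coloneq\ve^a_*\colon Y\to\ve^a_*(Y)$ be the base change. Here $q$ is surjective, so again $h\mapsto h\circ q$ is isometric on $\ell_\infty$, and it remains to check that $h_{q(u),q(v)}\circ q=h_{u,v}$. If $d(u,v)\ll a$, then $q(u)=q(v)$ and the left side vanishes by convention. In that case $h_{u,v}$ would not match, so I would interpret $\ve^{d(u,v)}$ correctly: the generator $h_{u,v}$ with $d(u,v)\ll a$ is simply not in the image, and the map is defined on generators $h_{q(u),q(v)}$ with $q(u)\ne q(v)$. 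Such a generator can be represented by $u,v$ with $d(u,v)\asymp a$. Then
\[
h_{q(u),q(v)}(q(y))=\frac{\ve^a(\ip{y,v}_u)}{\ve^a(d(u,v))}=\ve^{d(u,v)}(\ip{y,v}_u)=h_{u,v}(y),
\]
using $\ve^b=\ve^a(b)^{-1}\ve^a$ for $b\asymp a$, and using that $\ve^a$ is a homomorphism that commutes with forming Gromov products. This also shows that the choice of representatives $u,v$ does not matter, so the map is well defined and isometric.
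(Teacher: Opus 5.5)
Your proposal is correct and is essentially the paper's proof. In both, one reduces to finitely many generators $h_{u_i,v_i}$ and, for each $y\in X$, finds a point of $X(x,M)$ where all of them take the same value as at $y$; the second part is in both cases the identity $h_{u,v}=h_{\ve^a_*(u),\ve^a_*(v)}\circ\ve^a_*$ for $d(u,v)\asymp a$. The only difference is the choice of point: the paper projects $y$ onto the finite subtree spanned by the $u_i,v_i$, whereas you take the point of $[x,y]$ at distance $\max_i(d(x,u_i),d(x,v_i))$ from $x$, which works equally well by your Gromov-product check.
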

\begin{proof}
Let finitely many $u_i,v_i\in X(x,M)$ be given. 
Then the subtree $Y$ spanned by them is contained 
in $X(x,M)$. 
Hence for every $y \in X$, its projection $\pi(y)$ to $Y$ 
(see Section 2.1 in \cite{chiswell})
satisfies $h_{u_i,v_i}(\pi(y)) = h_{u_i,v_i}(y)$.
It follows that the ``identity'' map $h_{u,v}\mapsto h_{u,v}$ 
extends to an isometry from $\cC(X(x,M))$ 
into $\cC(X)$. 

For any $u,v\in X(x,\ip{a}_{\conv})$ with $\dist^{\ve^a}(u,v)\neq0$, 
one has $\dist(u,v)\asymp a$ and 
\[
h_{u,v}(y) = \ve^{\dist(u,v)} (\ip{y,v}_u) 
 = \ve^{a}(\dist(u,v))^{-1} \ve^{a} (\ip{y,v}_u) 
 = h_{\ve^a_*(u),\ve^a_*(v)}(\ve^a_*(y)).
\] 
This means that the map $h_{\ve^a_*(u),\ve^a_*(v)}\mapsto h_{u,v}$ 
extends to an isometry. 
\end{proof}

For a $\La$-tree $X$ and a finite set $\cS$ of 
isometric isomorphisms on $X$, 
we define the $\cS$-dis\-place\-ment at $x\in X$ by 
\[
\delta_{\cS}(x) \coloneq \max_{s\in\cS}\dist(sx,x) \in \La.
\]
We observe that $\dist(tx,x) \preceq \delta_{\cS}(x)$ 
for every $t$ in the group generated by $\cS$. 

\begin{lem}\label{lem:center}
For $X$ and $\cS$ as above, 
if $\cS$ does not contain inversions, 
then there is $x\in X$ that minimizes
$\ip{\delta_{\cS}(x)}_{\conv}$.
\end{lem}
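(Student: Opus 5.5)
The plan is to reduce the minimization to finitely many candidate points, using the structure theory of single isometries without inversions together with a projection argument. Convex subgroups of $\La$ are totally ordered by inclusion, so any finite set of them has a least element. It therefore suffices to find a finite set $V\subset X$ such that every $x\in X$ has some $v\in V$ with $\ip{\delta_{\cS}(v)}_{\conv}\subseteq\ip{\delta_{\cS}(x)}_{\conv}$.

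\emph{Step 1 (displacement formula).} Since each $s\in\cS$ is not an inversion, the classification of isometries of $\La$-trees (Chiswell, Ch.~3) applies. Either $s$ is elliptic, with fixed subtree $C_s\neq\emptyset$, or $s$ is hyperbolic, with axis $C_s$ and translation length $\ell(s)\in\La_+$. In both cases, with $\ell(s)=0$ in the elliptic case,
\[
\dist(sx,x)=\ell(s)+2\dist(x,C_s)\quad (x\in X).
\]
For $a,b\in\La_+$ one has $\ip{\max(a,b)}_{\conv}=\ip{a}_{\conv}+\ip{b}_{\conv}$, the larger of the two. Hence $\ip{\delta_{\cS}(x)}_{\conv}$ is the largest of the convex subgroups generated by $\ell(s)+\dist(x,C_s)$ over $s\in\cS$. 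It is therefore monotone in the tuple $(\dist(x,C_s))_{s}$: if each $\dist(y,C_s)\le\dist(x,C_s)$, then $\ip{\delta_{\cS}(y)}_{\conv}\subseteq\ip{\delta_{\cS}(x)}_{\conv}$.

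\emph{Step 2 (projection to a finite subtree).} Pick $c_s\in C_s$ for each $s$, and let $Y$ be the finite subtree spanned by $\{c_s\}$. Let $p\colon X\to Y$ be the projection. I claim $\dist(p(x),C_s)\le\dist(x,C_s)$. The nearest point $q$ of $C_s$ to $x$ lies on $[x,c_s]$, and this geodesic passes through $p(x)$. If $q\in[x,p(x)]$, then $p(x)\in[q,c_s]\subset C_s$ by convexity, so the left-hand side is $0$. Otherwise $q\in[p(x),c_s]$, and then $\dist(p(x),q)\le\dist(x,q)$. By Step 1, it suffices to minimize over $Y$.

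\emph{Step 3 (finitely many vertices).} The tree $Y$ is a finite union of segments whose endpoints are the $c_s$ and the branch points of $Y$; these lie in $X$ since tripod centres exist in a $\La$-tree. On each segment $[a,b]$, the function $y\mapsto\dist(y,C_s)$ is governed by the projection of $C_s\cap[a,b]$ and of the projection point of $[a,b]$ to $C_s$. Precisely, it decreases with slope $1$, is $0$, or increases with slope $1$, with at most two breakpoints, and these breakpoints are points of $X$ because all relevant Gromov products lie in $\La$. Let $V$ be the finite set of all endpoints and breakpoints for all $s\in\cS$. On each elementary subsegment $[v,w]$ with $v,w$ consecutive in $V$, every function $\dist(\cdot,C_s)$ is monotone, of slope $\pm1$ or $0$. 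Parametrize $y\in[v,w]$ by $t=\dist(v,y)$. Each $\ell(s)+2\dist(y,C_s)$ then has the form $\alpha_s\pm2t$ or $\alpha_s$, so it is at least its value at $v$ or at least its value at $w$. I expect this to be the main obstacle. The difficulty is that the coordinates are not simultaneously dominated at a single endpoint, so one cannot just invoke Step 1 directly.

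To overcome this I will compare convex classes directly. Let $M_y$ be the largest of the classes $\ip{\ell(s)+2\dist(y,C_s)}_{\conv}$. Suppose $M_v\not\subseteq M_y$ and $M_w\not\subseteq M_y$. Then some increasing term is not in $M_y$ when evaluated at $w$, so $\dist(y,w)\notin M_y$, and symmetrically $t=\dist(v,y)\notin M_y$. But the increasing terms at $y$ already dominate $t$, or the decreasing terms at $y$ dominate $\dist(y,w)$, which gives a contradiction. Hence $\min(M_v,M_w)\subseteq M_y$, and the minimum over the finite set $V$ is the desired minimizer.
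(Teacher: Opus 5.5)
Your proof is correct, but it takes a different route from the paper.

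\textbf{The paper's route.} The paper exhibits an explicit minimizer. Let $a$ be the largest distance between two characteristic sets $A_g,A_{g'}$, and let $b$ be the largest translation length. The paper takes $x$ to be the endpoint on $A_g$ of the bridge to $A_{g'}$ (when $a=0$, a point of $\bigcap A_s$). Chiswell's Lemma 2.1.10 gives $\dist(x,A_s)\le a$ for every $s$, and the triangle inequality gives $a\le \dist(y,A_g)+\dist(y,A_{g'})$ for every $y$. Hence $\delta_\cS(x)\preceq\max(a,b)\preceq\delta_\cS(y)$.

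\textbf{Your route.} You argue non-constructively in three steps, each of which checks out. First, you project onto the finite subtree spanned by points $c_s\in C_s$; your verification that $\dist(p(x),C_s)\le\dist(x,C_s)$ is fine. Second, you cut this subtree into finitely many elementary segments on which every $\dist(\cdot,C_s)$ is monotone of slope $\pm1$ or identically $0$. Third, you prove on each such segment the quasi-convexity property $\min(M_v,M_w)\subseteq M_y$.

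The last step is sound, and slightly stronger than you state. Both dominations you mention hold at once. An increasing term satisfies $f_s(y)=f_s(v)+2t\ge 2t$, which forces $t\in M_y$ and contradicts $t\notin M_y$. So the ``or'' in your final sentence can be replaced by either one alone.

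\textbf{Comparison.} The paper's argument is shorter and yields more. It gives a canonical point on a characteristic set, and it identifies the minimal class as $\ip{\max(a,b)}_{\conv}$ in terms of intrinsic data of $\cS$. Your argument avoids Lemma 2.1.10 and the Helly-type choice in the case $a=0$. It relies only on the displacement formula and on projections onto closed subtrees. The segmentwise quasi-convexity of $y\mapsto\ip{\delta_\cS(y)}_{\conv}$ is a reusable observation. The cost is a longer case analysis of the piecewise-linear distance functions, and no explicit description of the minimizer.
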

\begin{proof}
Let $\ell(s)$ denote the translation length and 
$A_s$ denote the characteristic set of $s\in\cS$ 
(see Section 3.1 in \cite{chiswell}). 
One has $\dist(sy,y)=\ell(s)+2\dist(y,A_s)$ for every $y\in X$ 
by 3.1.1 and 3.1.4 in \cite{chiswell}. 
Let $a\coloneq \max_{s,s'\in\cS}\dist(A_s,A_{s'})$. 
If $a=0$, pick $x \in \bigcap A_s$. 
Otherwise, 
pick $g,g'\in\cS$ such that $\dist(A_g,A_{g'}) = a$ 
and let $x\in A_g$ be the projection of $A_{g'}$ to $A_g$. 
Note that for every $s\in\cS$ and $y\in X$ one has 
\[ \dist(x,A_s)\le a \le \dist(y,A_g)+\dist(y,A_{g'})
\]
by Lemma 2.1.10 in \cite{chiswell}. 
Let $b \coloneq \max_{s\in\cS} \ell(s)$. 
If $a \preceq b$, 
then $\delta_{\cS}(x) \preceq b$.
If $b \preceq a$, 
then $\delta_{\cS}(x) \preceq a$. 
In either case, $\delta_{\cS}(x) \preceq \delta_{\cS}(y)$.
\end{proof}
\section{Amenability of real trees}
\begin{thm}\label{thm:Rtreeequivrel}
Let $\La$ be either $\IZ$ or $\IR$ and 
let $\Ga\acts T$ be an isometric action of 
a countable group $\Ga$ on a separable $\La$-tree.
Let $B\subset T$ denote the countable set of branch points 
and $\hat{T}$ denote the tree compactification of $T$.
Assume that $T$ satisfies conditions 
$(\ref{cond:LaTreeCA})$ and $(\ref{cond:LaTreeAS})$ 
in Theorem~$\ref{thmA:LaTree}$.
Then the Borel $\Ga$-space $\hat{T}\setminus B$ is amenable. 
\end{thm}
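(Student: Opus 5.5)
We will deduce the statement from Lemma~\ref{lem:borelamencriterion} applied to the Borel $\Ga$-space $Z\coloneq\hat{T}\setminus B$. By Remark~\ref{rem:sepRtree}(\ref{remitem:sep}), $\hat{T}$ is second countable, $B$ is countable and $\Ga$-invariant, and $Z$ is a standard Borel $\Ga$-space. Two hypotheses of Lemma~\ref{lem:borelamencriterion} must be checked.

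The first is that the stabilizers $\Ga^x$, $x\in Z$, are amenable. By condition~(\ref{cond:LaTreeAS}) with $\La=\IR$ or $\IZ$ (so $\ll$ means $=0$ there, or simply small), the pointwise stabilizer of any nondegenerate arc $[x,y]$ is contained in the amenable subgroup of~(\ref{cond:LaTreeAS}). Hence arc stabilizers are amenable, and Lemma~\ref{lem:nbst} gives amenability of $\Ga^x$ for every non-branch point $x\in\hat{T}$. That these stabilizers form a countable family is exactly condition~(\ref{cond:LaTreeCA}).

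The second, and main, hypothesis is $\cM$-amenability of $\cR\coloneq\cR_{\Ga\acts Z}$. The plan is to exhibit $\cR$ as a subrelation of a hyperfinite-type relation built from the tree. First split $Z=(Z\cap\partial T)\sqcup(Z\cap T^\circ)\sqcup(Z\cap L(T))$. Using Remark~\ref{rem:sepRtree}(iii), replacing $T$ by $T^\circ$ turns leaves into boundary points, so we may assume $Z=\partial T\sqcup(T\setminus B)$.

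On $\partial T$, the standard argument for boundaries of trees applies. Fix a base point $o$ and a countable dense set $\{y_k\}$. Two points are equivalent under $\cR$ only if they are tail equivalent, in the sense that the rays $[o,x)$ and $[o,gx)$ eventually differ by the translation $g$. We realize $\cR|_{\partial T}$ as a subrelation of the increasing union of relations $\cR_n$, where $(x,x')\in\cR_n$ means there exist $g\in F_n$ ($F_n$ an exhaustion of $\Ga$) with $x'=gx$ and agreement beyond distance $n$ from $o$. Each $\cR_n$ should be $\cM$-amenable: it has finite classes modulo the coordinate $\Ga$-ambiguity. That ambiguity is controlled by the stabilizers of rays or arcs, which are amenable, and then averaging over the finite part is exactly the ``horofunction level'' argument.

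For points of $T\setminus B$, each such point lies in the interior of a segment with no branch points. Here $\cR$ restricted to $T\setminus B$ should be treated via the countably many maximal branch-free open arcs (edges) $e$ between branch points. On such an arc, $\Ga$-translates identify points via isometries of arcs. The relation is then generated by the action of the countable family of stabilizers of edges, each amenable as a subgroup of the group in condition~(\ref{cond:LaTreeAS}), acting on the arc, together with a countable ``choice'' of edge representatives. The restriction to a single edge is the orbit relation of an amenable group, hence $\cM$-amenable, and countable unions are handled by Lemma~\ref{lem:becd} and the restriction lemma.

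If a branch-free component is a ray or the whole line (no branch points at all), $T$ is a line. In that case $\Ga$ acts through $\IZ/2\ltimes\IR$ with amenable kernel, so $\Ga$ is amenable and the claim is trivial.

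The hardest step is $\cM$-amenability on $\partial T$ when $\La=\IR$ and branch points accumulate. Our first attempt there would be to define $\nu_n^x$ as the uniform measure on points $gx$ such that $g$ moves the ray $[o,x)$ to agree with itself beyond a horofunction level $\beta_x\in[n,n+1)$, with the level chosen via a Borel randomization. We would then verify invariance along full-group elements by the usual tail-equivalence estimate. The proof then finishes by combining Lemma~\ref{lem:becd} with Lemma~\ref{lem:borelamencriterion}.
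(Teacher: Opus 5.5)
Your check of the stabilizer hypotheses matches the paper. The $\cM$-amenability part, however, has two genuine gaps.

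\textbf{The interior $T\setminus B$.} You assume every non-branch point lies in a maximal branch-free open arc, i.e.\ that $B$ is discrete. For the $\IR$-trees this theorem is really about (e.g.\ those produced by base change in the proof of Theorem~\ref{thmA:LaTree}), $B$ is typically dense. Then every open arc meets $B$, and your ``edges'' do not exist.

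The paper localizes to finite subtrees instead of edges of $T$. With $\{y_k\}$ dense, the subtree $Y=\bigcup_{k,l\le n}(y_k,y_l)$ has finitely many branch points \emph{of its own}. For finite $F\Subset\Ga$, the partial isometries $\vp_t=t|_{Y\cap t^{-1}Y}$, $t\in F$, are pieced together from finitely many isometries between finitely many intervals. Laying those intervals on a line exhibits the relation $\cS$ generated by the $\vp_t$ as a subrelation of the orbit relation of a countable subgroup of the amenable group $\IR\rtimes(\IZ/2)$ acting on $\IR$. Lemma~\ref{lem:becd} then gives $\cM$-amenability of $\cR_{\Ga\acts T}$ as an increasing union over $(n,F)$, and one restricts to $T\setminus B$.

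\textbf{The boundary $\partial T$.} This step is not actually carried out, and the route you sketch fails as stated.
\begin{itemize}
\item Your relations $\cR_n$ are not transitive.
\item The relations they generate have infinite classes, e.g.\ orbits of a hyperbolic element of $F_n$.
\item ``Finite classes modulo the $\Ga$-ambiguity'' is never made precise.
\item As written, if $g[o,x)$ agrees with $[o,x)$ beyond some level then $gx=x$, so your $\nu_n^x$ collapses to $\delta_x$.
\item The horofunction trick is a hyperfiniteness argument for tail equivalence on countable simplicial trees, with no evident analogue for $\IR$-trees.
\end{itemize}

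The missing idea is that you need not go through the orbit relation on $\partial T$ at all. Once approximate equivariant means $\zeta_i$ on $T\setminus B$ exist, the paper transports them to $\partial T$ using the $\Ga$-invariant length measure $\mu$, which satisfies $\mu(B)=0$.
\begin{itemize}
\item For ends at finite distance from $o$ (these include the former leaves), $\eta_n^x$ is normalized $\mu$ on the last piece of $[o,x)$ of length $1/n$. This is exactly equivariant for large $n$.
\item For ends at infinite distance, $\eta_n^x$ is normalized $\mu$ on the first piece of length $n$. Then $\|\eta_n^{sx}-s\eta_n^x\|_1\le 2\dist(o,so)/n$.
\end{itemize}
Then $\xi_{n,i}^x=\int\eta_n^x(z)\zeta_i^z\,d\mu(z)$ is an approximate equivariant mean on $\partial T$. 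The reason is that $\int\eta_n^x\,dm(x)\,d\mu$ is a probability measure on $T\setminus B$, against which $\zeta_i$ can be made almost equivariant.
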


Before entering the proof, we recall the notion of 
the \emph{length measure} $\mu$ on an $\IR$-tree $T$. 
See Section 4.3.5 in \cite{evans}. 
In case $T$ is separable, 
$\mu$ is the unique $\sigma$-finite atomless 
Borel measure whose restriction to an arc 
coincides with the associated Lebesgue measure. 
Every isometry on $T$ preserves the length measure.

\begin{proof}
By condition (\ref{cond:LaTreeAS}), all arc stabilizers 
for $\Ga\acts T$ are amenable and hence the stabilizer 
subgroup $\Ga^x$ at every $x\in \hat{T}\setminus B$ 
is amenable by Lemma~\ref{lem:nbst}. 
We assume $T=T^\circ$ and 
prove amenability of $T\setminus B$ and $\partial T$ separately. 

We prove amenability of $\Ga\acts T\setminus B$. 
Our proof is inspired by \cite{glp}.
Thanks to Lemma~\ref{lem:borelamencriterion}, 
it suffices to show that the orbit equivalence 
relation $\cR_{\Ga\acts T}$ is $\cM$-amenable. 
The case of simplicial tree is trivial as $T$ is countable. 
We assume that $T$ is an $\IR$-tree and take 
a countable dense subset $\{ y_k \}$ of $T$. 
Thus $T = T^\circ = \bigcup_{k,l}(y_k,y_l)$. 
By Lemma~\ref{lem:becd}, it suffices to show that, 
for every $n\in\IN$ and every finite subset $F\Subset\Ga$,
the countable Borel equivalence relation $\cS$ 
on $Y \coloneq \bigcup_{k,l \le n}(y_k,y_l)$ 
induced by $F$ is $\cM$-amenable. 
We write $\vp_t$ for the partial isometry that is $t$ restricted to 
$\dom\vp_t = \{ x\in Y : tx\in Y\}$. 
We observe that $Y$ has finitely many branch points and 
is decomposed into a finite family $\{ I_k\}$ of intervals 
(possibly degenerate) in such a way that each $\vp_t$, $t\in F$, 
permutes the set $\{ I_k\}$. 
Align $\{I_k\}$ in a straight line. 
Then $\vp_{t,k} \coloneq \vp_t|_{I_k}$ becomes a partial translation possibly after a flip. 
Hence $\cS$ is a subequivalence relation 
of the orbit equivalence relation of 
a countable subgroup of $\IR\rtimes(\IZ/2)$ acting on $\IR$.
Since $\IR\rtimes(\IZ/2)$ is amenable, 
the countable Borel equivalence relation $\cS$ is $\cM$-amenable. 

We prove amenability of  $\Ga\acts \partial T$. 
We only deal with the case $\La=\IR$, 
as the other case $\La=\IZ$ is 
simpler (and well-known as well) 
by repeating the following proof 
for the edge paths. 
Let $\zeta_i\colon T\setminus B \to \Prob \Ga$ 
be an approximate equivariant mean, 
whose existence is already proved above. 
Let $\mu$ denote the length measure on $T$ 
and fix a base point $o\in T$. 
We extend the metric $\dist$ to $\dist\colon (T\cup\partial T)\times T\to[0,\infty]$ 
in the obvious way and set 
$\partial_\fin T\coloneq\{ x\in\partial T :  \dist(x,o) < \infty\}$ 
and $\partial_\infty T\coloneq \partial T\setminus\partial_\fin T$.
These subsets are Borel in $\hat{T}$. 
We observe that 
\[
\Omega_n \coloneq \{ (x,z) \in \partial_\fin T \times T 
 : z\in[o,x),\,\dist(x,z)\le 1/n\}
\] 
is a Borel subset of $\partial_\fin T \times T$, 
since $\hat{T} \times T \ni (x,z) \mapsto h_{o,z}(x)$ 
is continuous as $T$ in the second variable 
is equipped with the original metric topology; 
see Remark~\ref{rem:sepRtree}.(\ref{remitem:sep}). 
We denote by $\upsilon_n$ the characteristic 
function for $\Omega_n$. 
Observe that $\upsilon_n^x \in L^1(T,\mu)_+$ with 
$\|\upsilon_n^x\|_1 = (1/n)\wedge\dist(x,o)$. 
Thus $\eta_n(x,z) \coloneq \|\upsilon_n^x\|_1^{-1}\upsilon_n(x,z)$ 
satisfies $\|\eta_n^x\|_1=1$ 
and 
\[
\| \eta_n^{sx} - s\eta_n^x \|_1 = 0
\]
for every $s\in\Ga$ and $x\in\partial_\fin T$ 
as soon as $n$ is large enough so that 
the boundary point $sx$ is at least $1/n$ 
away from the arc $[o,so]$. 
Similarly, we define the Borel function 
$\eta_n$ on $\partial_\infty T \times T$ to be 
$1/n$ times the characteristic function for 
$\{ (x,z) : z\in[o,x),\,\dist(z,o)\le n\}$. 
One has $\|\eta_n^x\|_1=1$ and
\[
\| \eta_n^{sx} - s\eta_n^x \|_1 \le \frac{2}{n}\dist(o,so)
\]
for every $s\in\Ga$ and $x\in\partial_\infty T$ (see Lemma 5.2.6 in \cite{bo}).
With $\mu(B)=0$ in mind, we define the Borel map 
$\xi_{n,i}\colon \partial T\to\Prob \Ga$ by 
$\xi_{n,i}^x (t) \coloneq \int \eta_n(x,z)\zeta_i^z(t)\,d\mu(z)$.
It satisfies 
\[
\| \xi_{n,i}^{sx} - s\xi_{n,i}^x \|_1
 \le \| \eta_n^{sx} - s\eta_n^x \|_1 + \int \eta_n^x(z)\|\zeta_i^{sz} - s\zeta_i^z\|_1\,d\mu(z).
\]
Let a finite subset $F\Subset\Ga$, $m\in\Prob(\partial T)$, and $\ve>0$ be given.
There is $n$ such that 
$\sum_{s\in F}\int \| \eta_n^{sx} - s\eta_n^x \|_1 \,dm(x)\le \ve/2$. 
Since $z\mapsto\int\eta_n^x(z)\,dm(x)\mu(z)$ is a probability 
measure on  $T\setminus B$, 
there is $i$ such that 
$\sum_s\int\int\eta_n^x(z)
\|\zeta_i^{sz} - s\zeta_i^z\|_1\,dm(x)\,d\mu(z)<\ve/2$.
For these $n$ and $i$, one has \\
\begin{minipage}{.9\textwidth}
\[
\sum_{s\in F} \int \| \xi_{n,i}^{sx} - s\xi_{n,i}^x \|_1\,dm(x)<\ve.
\]
\end{minipage}
\end{proof}
\section{Proof of Theorem~\ref{thmA:LaTree}}
\begin{defn}
Let $\Ga\acts X$ be an isometric action 
of a group $\Ga$ on a $\La$-metric space $X$.
For every $x\in X$, we define the equivalence 
relation $\asymp_x$ on $\Ga$ by $s\asymp_x t$ if 
$\dist(sx,x) \asymp \dist(tx,x)$ in $\La$. 
We define the \emph{$\La$-rank} of 
the action $\Ga\acts X$ by 
\[
\rank_\La(\Ga\acts X)
 \coloneq \sup\{ | \Ga/{\asymp_x} | -1 : x\in X\}
 \in \{0,1,\ldots,\infty\}.
\]
\end{defn}
\begin{proof}[Proof of Theorem~\ref{thmA:LaTree}]
We proceed by induction on $d \coloneq \rank_\La(\Ga\acts X)$. 
If $d=0$, then $\Ga$ acts trivially on $X$ and hence $\Ga=\Ga^x$ 
acts amenably on $K$. 
Suppose that Theorem is already proved up to $d-1$ and let's prove 
amenability of $\Ga\acts K$. 
By passing to a subgroup, we may assume that $\Ga$ is finitely generated, 
say by a subset $\cS\Subset\Ga$. 
Observe that this passage does not affect 
the conditions in Theorem. 
Let $x\in X$ be as in Lemma~\ref{lem:center} 
and set $a\coloneq\delta_{\cS}(x)$ and $M \coloneq \ip{a}_{\conv}$. 
Then $\Ga \acts X(x,M)$. 
Let $\La_0 \coloneq \ve_*^a(M) \subset\IR$ and 
$\ve_*^a(X(x,M))$ be 
the $\La_0$-tree obtained by base change. 
Take a separable $\Ga$-invariant $\La_0$-subtree 
$T_0\subset \ve_*^a(X(x,M))$. 
If $\La_0$ is discrete in $\IR$, then set $T \coloneq T_0$, which is a simplicial tree.
Otherwise, set $T$ to be the $\IR$-tree obtained 
by completion of $T_0$ 
(see Remark~\ref{rem:Qtree}). 
In either cases, $\Ga$ acts on $T$ isometrically and 
$\Ga\acts T$ satisfies the conditions 
$(\ref{cond:LaTreeCA})$ and $(\ref{cond:LaTreeAS})$, 
with $\La$ replaced by $\overline{\Lambda_0}$. 
By condition (\ref{cond:LaTreeAS}), all arc stabilizers for $\Ga\acts T$ are amenable. 
By Lemmas~\ref{lem:treeemb1} and \ref{lem:treeemb2}, 
one has isometric embeddings 
\[
\cC(T)\subset\cC( \ve_*^a(X(x,M))) \subset \cC(X)\subset C(K).
\]
In other words, there is a $\Ga$-equivariant quotient 
map $Q$ from $K$ onto $\hat{T}$. 

We are going to invoke Proposition~\ref{prop:amencriterion}. 
Let $B \subset T$ denote the countable 
subset of branch points. 
By Theorem~\ref{thm:Rtreeequivrel}, 
the Borel $\Ga$-space $\hat{T}\setminus B$ is amenable.
Let $b\in B$ be given and pick 
$y\in X(x,M)$ such that $\ve_*^a(y)=b$. 
Then one has 
$\Ga^b = \{ t\in\Ga : \dist(ty,y) \ll a \}$.
We consider the full $M^\circ$-subtree 
$Y \coloneq X(y,M^\circ)$ centered at $y$, 
on which $\Ga^b$ acts. 
By Lemma~\ref{lem:treeemb2}, $\hat{Y}$ is a quotient $\Ga^b$-space of $K$. 
We check that the action $\Ga^b\acts Y$ satisfies the conditions of Theorem 
with $\rank_{M^\circ}(\Ga^b\acts Y) \le d-1$. 
Only condition (\ref{cond:LaTreeFR}) needs a proof. 
Let $z\in Y$ be given arbitrary. 
By the choice of $x$, one has 
$a \preceq \delta_{\cS}(z)$. 
It follows that 
$\dist(tz,z) \ll \delta_{\cS}(z)$ for every $t\in\Ga^b$. 
This shows $|\Ga^b/{\asymp_z}| \le |\Ga/{\asymp_z}|-1\le d$. 
By the induction hypothesis, $K$ (and hence $Q^{-1}(b)$) 
is amenable as a compact $\Ga^b$-space. 
We are done by Proposition~\ref{prop:amencriterion}. 
\end{proof}
\section{Uniform exactness of free groups}\label{sec:tame}
We recall the ultraproduct construction 
for a $\La$-metric space $(Y,\dist)$. 
See Section~3 in \cite{guirardel} on this topic.
We set $Y^\cU \coloneq (\prod_{\IN} Y)/{\sim}$, where $(y_n)_n\sim(z_n)_n$ if 
$\{ n : y_n=z_n \}\in\cU$ and likewise for $\La^\cU$. 
Then $\La^\cU$ is an ordered abelian group 
(with the obvious order that 
$[a_n]_n\le[b_n]_n$ $\Leftrightarrow$ $\{ n : a_n\le b_n\}\in\cU$) 
and $Y^\cU$ is a $\La^\cU$-metric space with 
the $\La^\cU$-metric given by 
$\dist([x_n]_n,[y_n]_n) \coloneq [ \dist(x_n,y_n) ]_n$. 
Here we recall that $[x_n]_n$ etc is the element 
in the ultrapower that arises from the sequence $(x_n)_n$. 
We observe that the ultrapower $Y^\cU$ of a $\La$-tree $Y$ 
is a $\La^\cU$-tree. 
\begin{defn}
Let $G$ be a finitely generated group and denote by  
$\ell\colon G\to\IZ$ the word length function 
associated with a finite generating subset. 
We define the equivalence relation $\asymp$ 
on $G^\cU$ by $s\asymp t$ 
if $\ell^\cU(s) \asymp \ell^\cU(t)$ in $\IZ^\cU$. 
The equivalence relation $\asymp$ does not depend 
on the choice of a generating subset of $G$. 
We say that $G$ has \emph{tame geometry at infinity} 
if for every $d$ there is $r$ such that 
every $d$-generated subgroup $\Ga \le G^\cU$ 
has at most $r+1$ $\asymp$-equivalence classes, 
or equivalently $\rank_{\IZ^\cU}(\Ga\acts G^\cU)\le r$, 
where $G$ is viewed as the metric space 
equipped with the word distance 
$\dist(x,y) \coloneq \ell(x^{-1}y)$. 
\end{defn}

The notion of tameness is inspired by \cite{guirardel} 
that essentially proves 
the tameness of free groups in the middle of proving 
that every limit group acts freely on an $\IR^p$-tree. 
\begin{rem}
Here is an ultrafilter free characterization of tameness. 
A finitely generated group $G$ has tame geometry at infinity 
if and only if for every $d$ there is $r$ such that at most $r$ 
elements in the rank $d$ free group $\bF_d$ can have 
arbitrarily different magnitudes of lengths in $G$; 
namely for every 
$g_1,\ldots,g_{r+1}\in\bF_d$ there is $C>0$ such that 
every homomorphism $\pi\colon\bF_d\to G$ must have 
a pair $\{ i,j\}$ of distinct indices that satisfies
\[
\ell(\pi(g_i)) < C \ell(\pi(g_j))
\ \mbox{ and }\ 
\ell(\pi(g_j)) < C \ell(\pi(g_i))
\]
\end{rem}

Recall that a valuation on a field $K$ is a map $v$ 
from $K$ into $\La\cup\{\infty\}$, where $\La$ is 
an ordered abelian group called the value group, 
that satisfies $v(t)=\infty$ if and only if $t=0$, 
$v(st) = v(s)+v(t)$, and $v(s+t) \geq v(s)\wedge v(t)$ 
for all $s,t\in K$.  
Observe that the ultrapower 
$v^\cU\colon K^\cU\to\La^\cU\cup\{\infty\}$ 
is a valuation on the ultrapower field $K^\cU$. 
The map $v$ on $\IR$, given by $v(t) \coloneq -\log |t|$, 
is not a valuation, but is a ``quasi-valuation'' as 
it satisfies the last condition only up to 
a constant $\log 2$. 
Thus for the convex subgroup 
$\ip{1}_{\conv} \le \IR^\cU$, 
the induced map $[v^\cU]$ from the ultrapower 
field $\IR^\cU$ into the ordered abelian group 
$\IR^\cU/\ip{1}_{\conv}$ is a valuation, 
which is known as the \emph{natural valuation} of 
the ordered field $\IR^\cU$.

\begin{thm}[\cite{guirardel} for $K=\IQ_p$ 
and GPT-5.6 for $K=\IR$]\label{thm:tame}
Let $K\in\{\IQ_p,\IR\}$ and 
denote by $v\colon K\to \IR\cup\{\infty\}$ 
the $p$-adic valuation when $K=\IQ_p$ or 
the ``quasi-valuation'' as above when $K=\IR$. 
For $g \in \GL(m,K)$, set 
\[
\nu(g) \coloneq -\min\{ v(g_{i,j}), v((g^{-1})_{i,j}) : i,j\},
\]
which is equivalent to $\log(\|g\|\vee\|g^{-1}\|)$ when $K=\IR$.
Let $G\le \GL(m,K)$ be a finitely generated subgroup 
such that the word length $\ell$ is equivalent 
to $\nu$ on $G$:  
there are $C,D\geq0$ that satisfies 
\[
C^{-1}\nu(g) - D \le \ell(g) \le C\nu(g) + D
\]
for all $g\in G$. 
Then $G$ has tame geometry at infinity.
\end{thm}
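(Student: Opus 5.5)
We must show: for every $d$ there is $r$ such that every $d$-generated subgroup $\Ga\le G^\cU$ has at most $r+1$ classes under $\asymp$. Equivalently, by the ultrafilter-free remark, elements $\pi(g_1),\dots,\pi(g_{r+1})$ of $G$ coming from words in $\bF_d$ cannot all have pairwise arbitrarily different magnitudes of $\nu$. Since $\ell$ and $\nu$ are equivalent up to the affine constants $C,D$, magnitudes in $\IZ^\cU$ defined through $\ell^\cU$ agree with those defined through $\nu^\cU$ (for unbounded elements; bounded elements all lie in the single class of $\ip{1}_{\conv}$, which costs one extra class). So I will replace $\ell$ by $\nu$ throughout and bound the number of $\asymp$-classes of $\nu^\cU$ on a $d$-generated subgroup $\Ga=\langle s_1,\dots,s_d\rangle\le \GL(m,K)^\cU=\GL(m,K^\cU)$.

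The key step is to show that on $\GL(m,K^\cU)$, the quantity $\nu^\cU(g)$ is, up to bounded factors, the negative of a valuation of entries of $g$ and $g^{-1}$ taken in the valued field $K^\cU$ (with valuation $v^\cU$ for $\IQ_p$, and the natural valuation $[v^\cU]$ into $\IR^\cU/\ip{1}_{\conv}$ for $\IR$; for $\IR$ we further pass to the valuation ring–type convex hierarchy). The entries of every element of $\Ga$ lie in the subring $R\subset K^\cU$ generated by the $d\cdot 2m^2$ entries of $s_i^{\pm1}$ (and $\det^{-1}$). The $\asymp$-class of $g$ is determined by the convex subgroup $\ip{\nu^\cU(g)}_{\conv}$ of the value group; and since $v^\cU(g_{ij})$ for $g_{ij}\in R$ is dominated by the values on the generators of $R$, the relevant convex subgroups are those of the value group of the field $L=\mathrm{Frac}(R)$, which is finitely generated of transcendence degree at most $N\coloneq 2dm^2+1$ over the prime field. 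By Abhyankar's inequality, the rational rank of the value group of $L$ (relative to the valuation restricted from $K^\cU$, or the composite of $v^\cU$ with the coarsening for $\IR$) is at most $N$ (plus one for $\IQ_p$ from $v(p)$), hence it has at most $N+1$ convex subgroups. Thus the classes $\ip{\nu^\cU(g)}_{\conv}$, $g\in\Ga$, number at most $r\coloneq N+2$, uniformly in $\Ga$.

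Concretely the steps are: (i) reduce $\ell$ to $\nu$ using the hypothesis; (ii) identify $\nu^\cU$ with $-\min$ of the ultrapower (quasi-)valuation over entries, and note that for $K=\IR$ the quasi-valuation's additive error $\log 2$ (and the $m$-dependent errors in matrix multiplication) disappear modulo $\ip{1}_{\conv}$, so only the natural valuation matters; (iii) the map $g\mapsto$ convex class of $\nu^\cU(g)$ factors through the set of convex subgroups of the value group of $L$; (iv) bound that set by Abhyankar's inequality. The main obstacle is step (ii)–(iii) for $K=\IR$: the natural valuation on $\IR^\cU$ only records classes modulo $\ip{1}_{\conv}$, whereas $\asymp$ in $\IZ^\cU$ compares magnitudes of $\nu^\cU$ itself; distinct $\asymp$-classes of elements whose $\nu^\cU$ are infinite correspond to distinct archimedean classes of $\log$ of entries, i.e.\ to distinct convex subgroups of the value group of the natural valuation, but I must check carefully that $\asymp$ of $\nu^\cU$ in $\IR^\cU$ corresponds exactly to the convex subgroup of $[v^\cU]$-values containing them (using that $\nu$ grows at most linearly, so values of infinite size modulo $\ip{1}_{\conv}$ are faithfully ranked). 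I would attempt this by proving that for $a,b\in\IR^\cU_+$ infinite, $a\asymp b$ iff the images in $\IR^\cU/\ip{1}_{\conv}$ generate the same convex subgroup, which holds since $\ip{1}_{\conv}\ll$ any infinite element.
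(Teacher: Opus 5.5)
Your proposal is correct and follows essentially the paper's route. The values of $\nu^\cU$ on $\Ga$ (taken modulo $\ip{1}_{\conv}$ where needed) lie in the value group of the subfield generated by the entries of the generators, whose rank is bounded by an Abhyankar-type inequality, and the $\asymp$-classes of unbounded elements inject into the archimedean classes of that value group. The only difference is cosmetic: the paper adjoins the diagonal copy $L_0$ of $K$ and uses $[v^\cU]$ (trivial on $L_0$) for both fields, citing Guirardel's Theorem~3.5, whereas you work over the prime field and, for $\IQ_p$, keep $v^\cU$ itself at the cost of one extra unit of rank. You should also count the class $\{1\}$ where $\ell^\cU=0$ separately from the bounded nonzero class, which is harmless.
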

\begin{proof}
We naturally identify $\GL(m,K)^\cU$ 
with $\GL(m,K^\cU)$. 
Accordingly, the ultrapower length $\ell^\cU$ 
on $G^\cU$ is equivalent to $\nu^\cU$, 
which is given by the same 
formula as $\nu$, but replacing $v$ with $v^\cU$. 
We denote the diagonal copy of $K$ 
in the ultrapower field $K^\cU$ by $L_0$. 
Let's recall from the preceding discussion 
the natural valuation $[v^\cU]$ 
on $K^\cU$ for the case $K=\IR$. 
For notational consistency, 
we also consider for the case $K=\IQ_p$ the valuation 
$[v^\cU]\colon K^\cU\to\IZ^\cU/\ip{1}_{\conv}$ 
induced by $v^\cU$. 
Accordingly, the corresponding map on $\GL(m,K^\cU)$ 
is denoted by $[\nu^\cU]$. 
Let a finitely generated subgroup 
$\Ga =\ip{\cS}\le G^\cU$ be given. 
Then for the subfield $L\le K^\cU$ 
that is generated by $L_0$ and the 
$m^2|\cS|$ entries of elements in $\cS$, 
the map $[\nu^\cU]$ on $\Ga$ 
takes values in the value group $[v^\cU](L^{\times})$. 
Note that $[v^\cU](L_0^{\times})=\{0\}$. 
By Theorem 3.5 in \cite{guirardel}, the ordered abelian 
group $[v^\cU](L^{\times})$ has rank 
at most $m^2|\cS|$. 
This implies that there are at most $m^2|\cS|-1$
equivalence classes 
of $\ell^\cU$ values on the ``unbounded'' 
subset $\Ga\setminus G$ of $\Ga$. 
Since $\ell^\cU=\ell$ on $G$ has 
exactly two equivalence classes, 
it satisfies the definition of tameness with $r = 3d+1$. 
\end{proof}

\begin{cor}\label{cor:tame}
Finitely generated free groups have tame geometry at infinity 
and so do the fundamental groups 
of closed real hyperbolic manifolds.
\end{cor}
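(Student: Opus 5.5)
Both claims will be obtained from Theorem~\ref{thm:tame}, by choosing a faithful representation in which the word length is comparable to $\nu$.

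\emph{Free groups.} Fix $d$. It suffices to treat $\bF_2$, since every finitely generated free group is a finite-index subgroup of, or embeds quasi-isometrically into, $\bF_2$; tameness only involves the growth-rate classes of $\ell^\cU$, and these are preserved under quasi-isometric embeddings of finitely generated subgroups. I will realize $\bF_2$ as a Schottky subgroup of $\GL(2,\IQ_p)$, for instance the subgroup generated by $\begin{pmatrix}1&p^{-1}\\0&1\end{pmatrix}$ and $\begin{pmatrix}1&0\\p^{-1}&1\end{pmatrix}$ with $p\ge 2$, or by two hyperbolic elements in general position. Such a subgroup acts freely and cocompactly on a subtree of the Bruhat--Tits tree, which is a quasi-convex invariant subtree. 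For an element $g$ of $\SL(2,\IQ_p)$, the quantity $\nu(g)$ is equal to $\dist(go,o)$ for the standard vertex $o$, up to a constant; this is the Cartan decomposition $g=k\,\mathrm{diag}(p^{-n},p^{n})\,k'$, which gives $\nu(g)=n$ and $\dist(go,o)=2n$. By the Švarc--Milnor lemma applied to the action on the invariant subtree, $\ell(g)$ is comparable to $\dist(go,o)$. Hence $\ell$ is equivalent to $\nu$, and Theorem~\ref{thm:tame} with $K=\IQ_p$ applies.

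Alternatively, one can work over $K=\IR$: take a convex-cocompact Schottky subgroup of $\SL(2,\IR)$. There $\log\|g\|$ is equal to $\frac12\dist(gi,i)$ in $\IH^2$ up to an additive constant, and convex cocompactness gives $\ell(g)\asymp\dist(gi,i)$. The inequality $\|g^{-1}\|=\|g\|$ holds in $\SL(2,\IR)$.

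\emph{Closed hyperbolic manifolds.} Let $M=\IH^n/\Ga$ be closed. Then $\Ga\le\mathrm{SO}(n,1)\le\GL(n+1,\IR)$ acts cocompactly on $\IH^n$. Since the action is cocompact, the Švarc--Milnor lemma gives $\ell(g)\asymp\dist(go,o)$ up to multiplicative and additive constants. In the hyperboloid model with $o=(0,\dots,0,1)$, the entry $g_{n+1,n+1}$ equals $\cosh\dist(go,o)$. By the $KAK$ decomposition, $\|g\|=e^{\dist(go,o)}$ and $\|g^{-1}\|=\|g\|$, so $\log(\|g\|\vee\|g^{-1}\|)=\dist(go,o)$. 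Theorem~\ref{thm:tame} with $K=\IR$ therefore finishes the proof, using that $\nu$ is equivalent to $\log(\|g\|\vee\|g^{-1}\|)$ for $K=\IR$.

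The main point needing care is the equivalence of $\nu$ with the displacement $\dist(go,o)$. In the $p$-adic case it is a standard computation with the Cartan decomposition. In the real case it follows from $KAK$ together with the bounded distortion between $\nu$ and $\log\|\cdot\|$, up to the constant $\log m$.
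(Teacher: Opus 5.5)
Your strategy is the paper's. You realize the group linearly over $\IQ_p$ or $\IR$, identify $\nu$ with the displacement of a base point via the Cartan/$KAK$ decomposition, compare displacement with word length by \v{S}varc--Milnor, and apply Theorem~\ref{thm:tame}. The hyperbolic-manifold half is correct and is exactly the paper's argument; write $O^+(n,1)$ rather than $\mathrm{SO}(n,1)$ so that non-orientable manifolds are covered. Your reduction to $\bF_2$ is legitimate: for an undistorted subgroup $H\le G$, the functions $\ell_H^\cU$ and $\ell_G^\cU$ define the same $\asymp$-classes on $H^\cU\le G^\cU$. It is, however, unnecessary, since Theorem~\ref{thm:tame} does not need a lattice and a rank-$d$ Schottky group can be used directly.

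The concrete $p$-adic example is wrong, and with it your claim that ``such a subgroup acts freely and cocompactly on a subtree''. The matrices $u=\begin{pmatrix}1&p^{-1}\\0&1\end{pmatrix}$ and $w=\begin{pmatrix}1&0\\p^{-1}&1\end{pmatrix}$ are unipotent, hence elliptic on the Bruhat--Tits tree. Indeed $\nu(u^n)\le 1$ for every $n$, while the word length of $u^n$ is unbounded, so the hypothesis $\ell\le C\nu+D$ of Theorem~\ref{thm:tame} fails. Worse, $u^p$ and $w^p$ are the standard generators of $\SL(2,\IZ)$, so $\langle u,w\rangle$ contains the element $u^{-p}w^pu^{-p}$ of order $4$ and is not even free.

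Discard this example and make your second option precise. Take hyperbolic $a,b\in\SL(2,\IQ_p)$ in ping-pong position, e.g.\ with disjoint axes. Then $\langle a,b\rangle$ is free, all its nontrivial elements are hyperbolic, and it acts freely and cocompactly on its minimal subtree. Your Cartan computation and \v{S}varc--Milnor then go through, with the base vertex moved into that subtree at the cost of an additive constant. Your real convex-cocompact Schottky alternative also works. So does the paper's choice of $\bF$ as a torsion-free uniform lattice in $\SL(2,\IQ_p)$: it acts freely and cocompactly on the whole tree and gives $\ell\sim\dist(gx_0,x_0)=2\nu(g)$ at once.
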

\begin{proof}
A finitely generated free group $\bF$ is a 
uniform lattice in $\SL(2,\IQ_p)$ and 
acts freely and cocompactly on the associated 
Bruhat--Tits tree.  
By the \v{S}varc--Milnor lemma, 
$\ell$ is equivalent to $g\mapsto\dist(gx_0,x_0) = 2\nu(g)$ 
for the base point $x_0$ (see Lemma 4.3.5 in \cite{chiswell}). 
Similarly, the fundamental group $G$ of a closed 
real hyperbolic manifolds is a uniform lattice in 
$O^+(n,1)\le \GL(n+1,\IR)$ and acts freely and 
cocompactly on the real hyperbolic space $\IH^n$. 
By the \v{S}varc--Milnor lemma, 
$\ell$ is equivalent to 
$g\mapsto\dist(gx_0,x_0)=\log\|g\|$ 
for the base point $x_0=(0,\ldots,0,1)$, 
where $\|g\| = \| g^{-1}\|$ is the operator norm 
of $g \in \GL(n+1,\IR)$ and $\log\|g\|$ is 
equivalent to $\nu(g)$ on $O^+(n,1)$. 
Thus Theorem~\ref{thm:tame} applies in both cases. 
\end{proof}
\begin{thm}[Theorem \ref{thmA:freegroup}]\label{thm:freegrp}
Free groups $\bF$ are uniformly exact. 
\end{thm}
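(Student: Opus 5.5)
We will deduce the statement from Theorem~\ref{thmA:LaTree}, applied to the group $\bF^\cU$ (or rather its countable subgroups) acting on the $\IZ^\cU$-tree $\bF^\cU$. Here $\bF$ is viewed as its Cayley tree, a $\IZ$-tree, so that the ultrapower is a $\IZ^\cU$-tree.

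\textbf{Reductions.} Amenability of a compact action can be tested on finitely generated subgroups. Moreover, since $\bF$ embeds in $\bF_2$ and the relevant algebra restricts well, we may take $\bF=\bF_2$. Indeed, $\ell_\infty\bF$ is a quotient of $\ell_\infty\bF_2$ via a conditional expectation onto a coset; alternatively, every free group is a subgroup of $\bF_2$, and uniform exactness should pass to subgroups by the same induction argument as in Lemma~\ref{lem:fd}. So fix a finitely generated subgroup $\Ga\le\bF^\cU$. Let $K$ be the Gelfand spectrum of $(\ell_\infty\bF)^\cU$, restricted to $\Ga$. Since amenability is witnessed by countably many functions, we may replace $K$ by a second countable $\Ga$-equivariant quotient that still dominates what we need.

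\textbf{Verifying the hypotheses.} The action $\Ga\acts X\coloneq\bF^\cU$ is by left multiplication. It is free, since $\bF^\cU$ acts freely on itself, and it is without inversions, since $\bF$ acts on its Cayley tree without inversions and this passes to the ultrapower by {\L}o\'s.
\begin{enumerate}
\item For condition~(\ref{cond:LaTreeCA}): amenable subgroups of $\bF^\cU$ are abelian. Commutation transitivity, and the fact that $\bF^\cU$ is a limit-type group in which amenable subgroups are locally cyclic, together with finite generation of $\Ga$, should give countably many amenable subgroups of $\Ga$. Concretely, each such subgroup is contained in a centralizer $C_\Ga(g)$, $g\in\Ga$, which is countable in number. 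I expect this to be routine.
\item For condition~(\ref{cond:LaTreeAS}): the subgroup of $t$ with $\dist(tx,x)+\dist(ty,y)\ll\dist(x,y)$ consists of elements whose axes almost contain a long segment. In a free group, two elements that both nearly fix a long segment commute, by the usual cancellation argument applied coordinatewise via {\L}o\'s. So this subgroup is abelian, hence amenable.
\item For condition~(\ref{cond:LaTreeFR}): this is exactly tameness. By Corollary~\ref{cor:tame}, $\bF$ has tame geometry at infinity, so $\rank_{\IZ^\cU}(\Ga\acts \bF^\cU)\le r(d)<\infty$.
\end{enumerate}
Point stabilizers $\Ga^x$ are trivial, so $K$ is trivially amenable for them.

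\textbf{Main obstacle.} The key step is showing that the tree compactification $\hat X$ is a $\Ga$-equivariant quotient of $K$, i.e.\ that $\cC(X)\subset(\ell_\infty\bF)^\cU$ inside $\ell_\infty(\bF^\cU)$. For $u=[u_n]_n$ and $v=[v_n]_n$ we have
\[
h_{u,v}([x_n]_n)=\ve^{\dist(u,v)}(\ip{x,v}_u)=\Lim_\cU \frac{\ip{x_n,v_n}_{u_n}}{\dist(u_n,v_n)}
\]
when $\dist(u,v)$ is unbounded. The function $x\mapsto \dist(u_n,v_n)^{-1}\ip{x,v_n}_{u_n}$ is bounded on $\bF$, so $h_{u,v}=[h_{u_n,v_n}]_n$ is internal. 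When $\dist(u,v)$ is bounded, the definition involves $\ve^{\dist(u,v)}$ on $\IZ$, giving an internal function again. Care is needed because the $\ve$-normalization must match the $\Lim_\cU$ ratio; the remark after the definition of $\ve^a$ ensures exactly this. With this containment, Theorem~\ref{thmA:LaTree} gives amenability of $\Ga\acts K$ for every finitely generated $\Ga$, hence of $\bF^\cU\acts(\ell_\infty\bF)^\cU$.

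Finally, Theorem~\ref{thmA:LaTree} requires $\Ga$ to be countable, but $K$ need not be second countable; this is allowed by the theorem as stated.
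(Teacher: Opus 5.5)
Your overall strategy is the paper's. You apply Theorem~\ref{thmA:LaTree} to a finitely generated $\Ga\le\bF^\cU$ acting freely on the $\IZ^\cU$-tree $\bF^\cU$, which gives no inversions and trivial stabilizers. You get the quotient $K\to\hat{X}$ from $h_{u,v}=[h_{u_n,v_n}]_n$. You verify condition~(\ref{cond:LaTreeAS}) by the axis-overlap/commutator argument, which for free groups does show the subgroup is abelian. You get condition~(\ref{cond:LaTreeFR}) from Corollary~\ref{cor:tame}. Those parts are fine. The containment $\cC(X)\subset(\ell_\infty\bF)^\cU$ that you call the main obstacle is a one-line check. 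The detour through a second countable quotient is unnecessary, and such a quotient could not dominate $\hat{X}$ anyway, since $\cC(X)$ is not separable.

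The genuine gap is condition~(\ref{cond:LaTreeCA}).

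First, your claim that amenable subgroups are locally cyclic is false. The subgroup $\langle a,[a^n]_n\rangle\cong\IZ^2$ is a finitely generated subgroup of $\bF^\cU$. Moreover, $C_{\bF^\cU}(a)=\langle a\rangle^\cU\cong\IZ^\cU$ contains a copy of $\IQ$, generated by the elements $[a^{n!/k!}]_n$ for $k\in\IN$.

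Second, and more seriously, your counting does not work. Knowing that each amenable subgroup lies in one of the countably many centralizers $C_\Ga(g)$ only controls the maximal ones. A countable abelian group such as $\IQ$ (which is locally cyclic) or $\bigoplus_{\IN}\IZ$ has continuum many subgroups, so nothing in your argument bounds the number of amenable subgroups inside one centralizer.

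What you need is that abelian subgroups of $\Ga$ are finitely generated. This does not follow from commutative transitivity; it is a real theorem about limit groups. $\Ga$ is a finitely generated limit group, and in such a group every subgroup not containing a non-cyclic free group is abelian and finitely generated (Corollary~4.4 in \cite{sela}, Theorem~6 in \cite{km}). Given this, every amenable subgroup of $\Ga$ is finitely generated. A countable group has only countably many finitely generated subgroups, so condition~(\ref{cond:LaTreeCA}) follows. This is exactly how the paper verifies it.
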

\begin{proof}
We write $Y$ for the free group 
$\bF$ viewed as a $\IZ$-tree. 
For amenability of $\bF^\cU\acts(\ell_\infty Y)^\cU$, 
it suffices to show that every finitely generated subgroup $\Ga \le \bF^\cU$ 
acts amenably on $(\ell_\infty Y)^\cU$.
We recall the canonical embedding 
$(\ell_\infty Y)^\cU\subset \ell_\infty(Y^\cU)$, 
given by $[f_n]_n([x_n]_n) = \Lim_\cU f_n(x_n)$. 
We observe that $\cC(Y^\cU)\subset (\ell_\infty Y)^\cU$, since 
$h_{u,v} = [h_{u_n,v_n}]_n$ belongs to $(\ell_\infty Y)^\cU$ for 
every $u=[u_n]_n$ and $v=[v_n]_n$ in $Y^\cU$. 
We will invoke Theorem~\ref{thmA:LaTree} to prove 
amenability of $\Ga\acts\cC(Y^\cU)$. 

We have to check the conditions of Theorem~\ref{thmA:LaTree} 
for the tree compactification of $Y^\cU$. 
The set of amenable subgroups in the finitely generated limit group $\Ga$ is 
countable, because every subgroup $\Delta$ of $\Ga$ that does not 
contains a non-cyclic free group is abelian and finitely generated 
(see Corollary 4.4 in \cite{sela} or Theorem~6 in \cite{km}; 
and \cite{sela:plms,km2} for a limit of 
a torsion-free hyperbolic group $G$).
The action $\Ga\acts Y^\cU$ is free as $\bF\acts Y$ is free. 
Condition (\ref{cond:LaTreeAS}) follows from Lemma 1.3 in \cite{sela}, 
but we sketch the proof here for the readers' convenience. 
Let $x,y\in Y^\cU$, $x\neq y$, be given and let $\Delta$ be the 
``rough arc stabilizer'' subgroup presented in 
Theorem~\ref{thmA:LaTree}.(\ref{cond:LaTreeAS}). 
We claim that the set of commutators in $\Delta$ is finite, 
which implies that $\Delta$ cannot contain a non-cyclic free group. 
Let's write $x=[x_n]_n$ and $y=[y_n]_n$ and take $s=[s_n]_n$ and $t=[t_n]_n$ in $\Delta$. 
We may assume that $\dist(x_n,y_n)\to\infty$. 
Let $z_n$ be a midpoint (roughly) of $[x_n,y_n]$. 
Since $Y$ is a tree and 
$\dist(s_nx_n,x_n)+\dist(s_ny_n,y_n) \ll \dist(x_n,y_n)$ 
in the colloquial sense of $\ll$, the isometry $s_n$ roughly acts 
as a translation by $\dist(s_nx_n,x_n)$ on the midway of $[x_n,y_n]$. 
The same holds for $t_n$ and hence $[s_n,t_n] z_n$ is contained 
in a ball centered at $z_n$ with radius independent of $n$ 
(or more precisely, for $\cU$-generic $n$). 
Since the $\bF$-action on $Y$ is free, the claim is proved. 
Finally, condition (\ref{cond:LaTreeFR}) follows from 
Corollary~\ref{cor:tame}.
\end{proof}

\begin{rem}
We note that every torsion-free hyperbolic group $G$ 
having tame geometry at infinity is also uniformly exact. 
This can be proved in the same way as with free groups 
by working with the $(\IZ^\cU/\IZ)$-tree $G^\cU/G$ and 
$C(K) \coloneq (\ell_\infty G)^\cU$ on which every 
conjugate of $G$ inside $G^\cU$ acts amenably. 
The author has a sentiment that torsion-free hyperbolic 
groups ought to have tame geometry at infinity.
\end{rem}
\section{Modulus of uniform exactness}\label{sec:mod}
All results in this section are derived 
based on the traditional recipe 
on exactness (see, e.g., \cite{bo}), but care must be taken not 
to lose the uniform estimate. 
We will use theory of operator spaces and completely bounded maps. 
See Appendix~B in \cite{bo} and \cite{pisier} for 
a comprehensive treatment on this topic. 
\begin{defn}\label{defn:unifex}
Recall that a group $\Ga$ is exact 
if the action $\Ga\acts\ell_\infty\Ga$ is amenable, or equivalently, 
for every finite unital symmetric subset $F\Subset\Ga$ 
and $\ve>0$, there are a finite subset $E\Subset\Ga$ and 
a map $\eta\colon\Ga\to\Prob \Ga$ 
such that 
\[
\bigcup_{x\in\Ga}\supp\eta^x\subset E
\ \mbox{ and }\ 
\sum_{s\in F}\sup_{x\in\Ga}\|\eta^{sx} - s\eta^x\|_1 \le \ve. 
\]
See, e.g., Theorem 5.1.6 in \cite{bo}. 
Here we say $F$ is \emph{unital} (resp.\ \emph{symmetric}) 
if $1\in F$ (resp.\ $t\in F$ implies $t^{-1}\in F$).
Let $k\colon \IN \to \IN$ be given. 
We say a group $\Ga$ has \emph{modulus of 
uniform exactness} $k$ if the above $E$ (and $\eta$) 
for $F$ and $\ve = |F|^{-1}$ can be taken as $E = F^{k(|F|)}$. 
\end{defn}
We observe that the class $\UE(k)$ 
of groups having $k$ as modulus of uniform exactness 
is closed under passing to subgroups and directed unions.
If $\Delta \in \UE(k)$ is amenable, then it is uniformly amenable, 
because for every finite unital symmetric subset $F\Subset\Delta$, 
the integration $\int\eta^x\,dm(x)$ w.r.t.\ an invariant 
mean $m$ on $\ell_\infty\Delta$ is an $(F,|F|^{-1})$-invariant 
mean in $\Prob\Delta$ that is supported on $F^{k(|F|)}$. 
See \cite{bozejko, keller, ks, wysoczanski}. 
More generally, for an uniformly exact group $\Ga$ and 
a normal amenable subgroup $\Delta \triangleleft \Ga$, 
one can pass $\eta$ for $\Ga$ to that for $\Ga/\Delta$. 

\begin{lem}
The class $\UE(k)$ is compact for every $k\in\IN^\IN$.
\end{lem}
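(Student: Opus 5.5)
We must show: if marked groups $(\vp_n\colon\bF_d\to\Ga_n)$ with $\Ga_n\in\UE(k)$ converge in $\cM_d$ to $\vp_\infty\colon\bF_d\to\Ga_\infty$, then $\Ga_\infty\in\UE(k)$. Since $\UE(k)$ is closed under subgroups by definition, this is exactly compactness. The plan is to transfer the witnesses $\eta$ from $\Ga_n$ to $\Ga_\infty$ using the fact that the defining condition is local: it only involves the finite set $E=F^{k(|F|)}$ and finitely many translates.

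First, fix a finite unital symmetric $F\Subset\Ga_\infty$ and put $N\coloneq k(|F|)$ and $E\coloneq F^N$. Since $\UE(k)$ is closed under subgroups, the condition may be checked inside the finitely generated subgroup $\ip{F}$, which is itself a limit of the subgroups $\ip{F_n}\le\Ga_n$. So we may assume $F$ generates $\Ga_\infty$ after remarking. Lift $F$ to a finite set $\tilde F\subset\bF_d$, chosen symmetric, containing $1$, and mapped injectively onto $F$. Because convergence in $\cM_d$ means that $\ker\vp_n\cap W=\ker\vp_\infty\cap W$ eventually for every finite $W\subset\bF_d$, the following holds for large $n$: $F_n\coloneq\vp_n(\tilde F)$ has $|F_n|=|F|$, and the ball $\tilde F^{R}$, with $R$ a fixed multiple of $N$, maps under $\vp_n$ and $\vp_\infty$ with the same identifications. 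Thus $\tilde F^{R}$ modulo the two kernels gives the same labelled Cayley ball, so $F_n^{R}$ and $F^{R}$ are marked-isomorphic balls.

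Second, for each such $n$ take $\eta_n\colon\Ga_n\to\Prob\Ga_n$ supported in $F_n^N$ with $\sum_{s\in F_n}\sup_x\|\eta_n^{sx}-s\eta_n^x\|_1\le|F|^{-1}$. We need to produce $\eta\colon\Ga_\infty\to\Prob E$. This is the main obstacle: $\eta_n$ is defined on the whole group $\Ga_n$, and $\Ga_n$ does not locally resemble $\Ga_\infty$ far from the identity in any pointwise sense. Balls around an arbitrary $x$ are isomorphic to balls around $1$, but the functions $x\mapsto\eta_n^x$ have no reason to match.

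To get around this, I would use a compactness/ultralimit argument. By translation, set $\theta_n^x\coloneq x^{-1}\eta_n^x\in\Prob(F_n^{-N}\!\cdot\!x^{-1}\ldots)$. It is cleaner to keep $\eta^x$ supported in $E$, as in the definition, so that the invariance condition reads $\eta^{sx}=s\eta^x$ approximately. Here $s\eta^x$ is supported in $sE\subset F^{N+1}$, so the inequality for $\eta^{sx}$ only compares measures on $F^{N+1}$, and for each $x$ only the values at $x$ and $sx$ matter. Hence the condition is a family of constraints on a function $\Ga\to\Prob(E)$, where $\Prob(E)$ is compact. The constraints are indexed by edges $(x,sx)$, and each constraint only involves the finite ball $F^{N+1}$ through the structure constants, that is, which products $s t$ with $t\in E$ equal which elements of $F^{N+1}$. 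Identify $\Prob(E_n)\cong\Prob(\tilde F^N/\!\sim)$ via the marked isomorphism of balls. The needed object is then a coloring of the vertices of the Cayley graph of $\Ga_\infty$ with respect to $F$ by elements of $\Prob(\tilde F^N/\!\sim)$, subject to local constraints that are identical in every $\Ga_n$ for large $n$. By Tychonoff and a local-finiteness argument, it suffices that every finite subgraph admits a valid coloring. A finite subgraph $W$ of the Cayley graph of $\Ga_\infty$ lifts into $\bF_d$ and, by convergence, embeds as the corresponding subgraph of the Cayley graph of $\Ga_n$ for large $n$, via $\vp_n\circ\vp_\infty^{-1}$ on a ball containing $W$. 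Restricting $\eta_n$ to that subgraph gives a valid coloring, since the constraints are closed (non-strict inequalities). Passing to the limit gives $\eta$ on all of $\Ga_\infty$ satisfying the bound with $\ve=|F|^{-1}$ and support in $E=F^{k(|F|)}$. Hence $\Ga_\infty\in\UE(k)$.
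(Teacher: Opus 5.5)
Your proof uses the same core idea as the paper: compactness of the space of $\Prob(E)$-valued maps, together with the fact that $\vp_n$ and $\vp_\infty$ make the same identifications on any fixed finite subset of $\bF_d$ once $n$ is large. The organization differs, however. The paper never embeds pieces of $\Ga_\infty$ into $\Ga_n$. Its steps are:
\begin{enumerate}
\item Lift $F$ to $\tilde F\subset\bF_d$, and use that $\vp_n$ is eventually injective on $\tilde F^{k(|F|)+1}$ to transport each $\eta_n$ to a map $\bF_d\to\Prob(\tilde F^{k(|F|)})$, constant on $\ker\vp_n$-cosets.
\item Take a single pointwise limit point in $[0,1]^{\bF_d\times\tilde F^{k(|F|)}}$. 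It is constant on $\ker\vp_\infty$-cosets, because every $g\in\ker\vp_\infty$ eventually lies in $\ker\vp_n$.
\item Push it forward to $\Ga_\infty$; push-forward does not increase $\ell_1$-norms.
\end{enumerate}
Working on the common domain $\bF_d$ makes the bookkeeping automatic. The inequality passes to the limit by lower semicontinuity: for each $s$, the supremum over $x$ for the limit is at most the $\liminf$ of the suprema, and $\sum_s\liminf\le\liminf\sum_s$. Your finite-subgraph/Tychonoff formulation on the Cayley graph of $\Ga_\infty$ is a legitimate alternative, but as written one step does not go through.

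\textbf{The gap: the shape of the constraint.} The defining inequality is $\sum_{s\in F}\sup_{x}\|\eta^{sx}-s\eta^x\|_1\le|F|^{-1}$, a sum of suprema. This is not a conjunction of conditions attached to single edges $(x,sx)$ with thresholds independent of $n$.
\begin{itemize}
\item The witnesses $\eta_n$ have per-generator defects $c_s^{(n)}\coloneq\sup_x\|\eta_n^{s_nx}-s_n\eta_n^x\|_1$ that vary with $n$. So the colorings you obtain on different finite subgraphs satisfy different constraints, and there is no fixed family of closed sets to which the finite intersection property applies.
\item If you instead impose the edgewise bound $\|\eta^{sx}-s\eta^x\|_1\le|F|^{-1}$, or the vertexwise bound $\sum_s\|\eta^{sx}-s\eta^x\|_1\le|F|^{-1}$, the limit only satisfies $\sum_s\sup_x\|\eta^{sx}-s\eta^x\|_1\le 1$. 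That does not put $\Ga_\infty$ in $\UE(k)$ for the same $k$.
\end{itemize}
The fix is easy. Include the threshold vector $c=(c_s)_{s\in F}$ as an extra coordinate ranging over the compact set $\{c\in[0,\infty)^F:\sum_s c_s\le|F|^{-1}\}$. Then impose $\|\eta^{sx}-s\eta^x\|_1\le c_s$ on the edges of each finite subgraph; the colorings coming from $\Ga_n$ come with $c=c^{(n)}$. Alternatively, take a limit point of the net of transported colorings and argue by lower semicontinuity as above.

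\textbf{A minor point.} A lift $\tilde F$ that is both symmetric and injective cannot exist if $F$ contains an involution, since $\bF_d$ is torsion-free. Symmetry of the lift is never needed, though, and $\vp_n(\tilde F)$ is eventually symmetric anyway.
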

\begin{proof}
Let $(\vp_n\colon\bF_d\to\Ga_n)_n$ be a convergent sequence 
in $\cM_d$. 
Let a finite subset $F_\infty\Subset \Ga_\infty$ be given 
and  lift it to $F\Subset\bF_d$ and set $F_n \coloneq \vp_n(F)$. 
One has $|F_n| = |F|$ eventually.  
If $\Ga_n\in\UE(k)$ for every $n$, then there is 
$\eta_n\colon \Ga_n\to \Prob \Ga_n$ that 
satisfies the relevant formula in Definition~\ref{defn:unifex}. 
We take a lift $\tilde{\eta}_n\colon \Ga_n\to \Prob {F^{k(|F|)}}$ 
through the eventually one-to-one map 
$\vp_n\colon F^{k(|F|)} \to F_n^{k(|F|)}$ 
and take a limit point $\tilde{\eta}_\infty$ 
of $\tilde{\eta}_n\circ\vp_n\colon\bF_d\to \Prob {F^{k(|F|)}}$, 
in the topology of $[0,1]^{\bF_d \times F^{k(|F|)}}$.
One has $\|\tilde{\eta}_\infty^x\|_1 = 1$ for all $x$, 
since $F^{k(|F|)}$ is finite. 
Hence there is $\eta_\infty\colon \Ga_\infty \to \Prob F_\infty^{k(|F|)}$ 
that satisfies $\eta_\infty\circ\vp_\infty = (\vp_\infty)_*\circ\tilde{\eta}_\infty$, 
where $ (\vp_\infty)_*\colon \Prob \bF_d \to \Prob \Ga_\infty$ is the push-out. 
Now, it is easy to check that $\eta_\infty$ has the required property.
\end{proof}

For a finite subset $E\Subset\Ga$, 
let $P_E$ denote the orthogonal projection 
from $\ell_2\Ga$ onto $\ell_2E$ and 
$\Phi_E\colon \IB(\ell_2\Ga)\to \IB(\ell_2E)$ denote 
the compression map given by $S\mapsto P_ESP_E$. 

\begin{lem}\label{lem:osex}
Consider the following conditions 
for finite unital symmetric subsets $F,E\Subset\Ga$ 
and $\ve>0$. 
\begin{enumerate}
\item\label{cond:cb1}
There is $\eta\colon\Ga\to\Prob \Ga$ that satisfies 
the displayed condition in Definition~\ref{defn:unifex}.
\item\label{cond:cb2}
For every \cst{} $A$ and 
$S=\sum_{s\in F} a_s\otimes \lambda(s) \in A\otimes\rC^*_\rr\Ga$, 
one has 
\[
(1-\ve)\|S\|_{A\otimes\rC^*_\rr\Ga} 
 \le \|(1\otimes P_E)S(1\otimes P_E)\|_{A\otimes\IB(\ell_2E)}
 \le \|S\|_{A\otimes\rC^*_\rr\Ga}.
\]
Equivalently, for $\cF\coloneq\lspan\lambda(F)\subset\rC^*_\rr\Ga$, 
one has $\|(\Phi_E|_\cF)^{-1}\|_{\cb}\le (1-\ve)^{-1}$. 
\end{enumerate}
Then $(\ref{cond:cb1})\Rightarrow(\ref{cond:cb2})$ holds. 
Conversely, if $(\ref{cond:cb2})$ is true and $|F|\ve<0.01$, 
then $(\ref{cond:cb1})$ holds with $E$ and $\ve$ 
exchanged by $E^k$ and $100(|F|\ve)^{1/2}$. 
Here $k = k(|E|,\ve)$ is a positive integer that depends 
on $|E|$ and $\ve$.
\end{lem}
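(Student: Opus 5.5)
For $(\ref{cond:cb1})\Rightarrow(\ref{cond:cb2})$ I would follow the standard Schur-multiplier argument (cf.\ Theorem 5.1.6 in \cite{bo}), keeping track of constants. Given $\eta$, put $\xi^x\coloneq(\eta^x)^{1/2}\in\ell_2\Ga$, a unit vector supported in $E$. Define a positive definite kernel $\kappa(x,y)\coloneq\ip{\xi^x,\xi^y}$ and an isometry $V\colon\ell_2\Ga\to\ell_2\Ga\otimes\ell_2\Ga$ by $V\delta_x\coloneq\delta_x\otimes\xi^{x}$ (with the appropriate convention, e.g.\ supports in $x^{-1}E$ or $xE$ depending on the left/right normalization). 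Then $m_\kappa=V^*(\,\cdot\,\otimes 1)V$ is a unital completely positive Schur multiplier on $\IB(\ell_2\Ga)$, and $\|m_\kappa\|_{\cb}\le1$.

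Two estimates are needed. First, the entries $\kappa(x,sx)$ satisfy $|1-\kappa(x,sx)|\le\|\eta^{sx}-s\eta^x\|_1$, using $\|\xi-\zeta\|_2^2\le\|\xi^2-\zeta^2\|_1$. Hence $\|(\id_A\otimes m_\kappa)(S)-S\|\le\sum_{s\in F}\|a_s\|\,\sup_x|1-\kappa(x,sx)|$. To bound $\|a_s\|$ by $\|S\|$, I would use the conditional expectation onto the coefficient of $\lambda(s)$, which is contractive on $A\otimes\rC^*_\rr\Ga$. This gives $\|(\id\otimes m_\kappa)(S)-S\|\le\ve\|S\|$. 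Second, $V$ factors through $E$: $(\id\otimes m_\kappa)(S)$ is a compression of $\bigoplus_x$ of translates of $(1\otimes P_E)S(1\otimes P_E)$. By left invariance of $\lambda$, this shows $\|(\id\otimes m_\kappa)(S)\|\le\|(1\otimes P_E)S(1\otimes P_E)\|$. Combining the two estimates yields the lower bound, and the upper bound is trivial. Care is needed so that the compression lands exactly on $P_E$ and not on some translate; unitality and symmetry of $F$ are available to fix the bookkeeping.

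For the converse, I would use the cb-bound to construct $\eta$. Consider $\psi\coloneq(\Phi_E|_\cF)^{-1}\colon\Phi_E(\cF)\to\cF\subset\rC^*_\rr\Ga$. Since $\Phi_E(\cF)$ sits in the finite-dimensional $\IB(\ell_2E)$, Arveson--Wittstock extension gives a map $\IB(\ell_2E)\to\IB(\ell_2\Ga)$ with cb norm at most $(1-\ve)^{-1}$. By a Stinespring/Haagerup decomposition, adjusted to be nearly unital completely positive using unitality of $F$ (the $1\in F$ trick with the $2\times2$ matrix, costing $O(\ve)$), the composite $\psi\circ\Phi_E$ is close to the identity on $\cF$ and factors through $\IB(\ell_2E)$. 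Next I would average over the diagonal $\ell_\infty\Ga$ action, i.e.\ take the $\ell_\infty\Ga$-bimodule/Schur part. This produces a positive definite kernel $\kappa(x,y)=\ip{\zeta^x,\zeta^y}$ with $|\kappa(x,sx)-1|\lesssim|F|\ve$ for $s\in F$, where the $\zeta^x$ have controlled support.

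The main obstacle is support control. A general ucp map $\IB(\ell_2E)\to\IB(\ell_2\Ga)$ gives vectors $\zeta^x$ that are only approximately supported in a finite band, and $\zeta^x$ lives in $\ell_2\Ga\otimes\ell_2 E\otimes\cH$ rather than in $\ell_2\Ga$. I would first try to take $\zeta^x$ in $\ell_2(E)\otimes\IC^{N}$, with $N$ depending only on $|E|$ and $\ve$ (Carathéodory/finite-dimensional approximation in $\IB(\ell_2E)$). I would then convert $\zeta^x\mapsto\eta^x(t)\coloneq\|\zeta^x(t)\|^2$-type probability vectors, together with a compactness argument in the finite-dimensional parameter space to make the number of needed translates bounded. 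This is where $E^k$ with $k=k(|E|,\ve)$ arises, and it also yields $\|\eta^{sx}-s\eta^x\|_1\le2\|\zeta^{sx}-s\zeta^x\|_2\lesssim(|F|\ve)^{1/2}$. Summing over $s\in F$, with constants tuned under $|F|\ve<0.01$, should give the stated $100(|F|\ve)^{1/2}$.
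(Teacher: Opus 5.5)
Your proof of $(\ref{cond:cb1})\Rightarrow(\ref{cond:cb2})$ is the paper's argument. The paper's unital completely positive map $\Psi(S)=\sum_tV_t^*SV_t$, with $V_t\delta_x=\xi^x(xt^{-1})\delta_{xt^{-1}}$, is exactly your Schur multiplier factored through $\IB(\ell_2E)$. One correction to the bookkeeping: the compressions become unitarily equivalent to $(1\otimes P_E)S(1\otimes P_E)$ because right translations commute with $\lambda(\Ga)$, not by left invariance.

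The converse, however, has a genuine gap, precisely at the step you flag as the main obstacle. Producing a unital completely positive $\psi\colon\IB(\ell_2E)\to\IB(\ell_2\Ga)$ with $\theta\coloneq\psi\circ\Phi_E$ close to $\id$ on $\cF$ is fine. The paper gets $\|\theta|_\cF-\id_\cF\|_{\cb}\le4|F|\ve$ from Corollary B.11 in \cite{bo} and Arveson's theorem. After that, though, you give no actual mechanism for support control. ``Carath\'eodory approximation'' and ``a compactness argument in the finite-dimensional parameter space'' do not explain why anything should be supported in some $E^k$, nor why $k$ should depend only on $|E|$ and $\ve$. Averaging over $\ell_\infty\Ga$ is also the wrong operation. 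The $\ell_\infty\Ga$-bimodule part of $\theta$ has Schur symbol $\ip{\theta(e_{x,y})\delta_y,\delta_x}$, which vanishes off $E\times E$. The right kernel is read off directly: $u(x,y)\coloneq\ip{\theta(\lambda(xy^{-1}))\delta_y,\delta_x}$. It is positive definite because $\lambda(xy^{-1})=\lambda(x)\lambda(y)^*$ and $\theta$ is completely positive, and $|1-u(sx,x)|\le\|\theta(\lambda(s))-\lambda(s)\|$.

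The paper closes the gap with finite propagation plus functional calculus. Since $\Phi_E(\lambda(g))=0$ for $g\notin E^2$, one has $u(x,y)=0$ unless $xy^{-1}\in E^2$. So the positive operator $a_u$ with kernel $u$ has propagation $E^2$ and $\|a_u\|\le|E^2|$. Take a polynomial $p$ of degree $k=k(|E|,\ve)$ approximating $r\mapsto r^{1/2}$ uniformly on $[0,|E^2|]$. Then $b\coloneq p(a_u)\ge0$ has propagation $E^{2k}$ and $\|a_u-b^2\|<\ve$. Set $\eta^x(t)\coloneq|(b\delta_x)(t^{-1}x)|^2$ and normalize; it is supported in $E^{2k}$ and satisfies $\|\eta^{sx}-s\eta^x\|_1\le\|b\delta_{sx}-b\delta_x\|_2\|b\delta_{sx}+b\delta_x\|_2\lesssim(|F|\ve)^{1/2}$. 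This is exactly where $k(|E|,\ve)$ comes from.

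Alternatively, your worry that the vectors are only approximately supported disappears if you take them from the Stinespring dilation of $\psi$ rather than from $a_u^{1/2}$. Then your idea $\eta^x(t)=\|\zeta^x(t)\|^2$ works verbatim:
\begin{enumerate}
\item Every unital representation of $\IB(\ell_2E)$ is an amplification, so $\psi=V^*(\,\cdot\otimes1_\cH)V$ with $V$ an isometry and $V\delta_x=\sum_{t\in E}\delta_t\otimes v_{x,t}$.
\item A direct computation gives $\ip{\theta(\lambda(s))\delta_x,\delta_{sx}}=\sum_t\ip{v_{x,t},v_{sx,st}}$, with $v_{y,t}\coloneq0$ for $t\notin E$.
\item Hence $\sum_t\|v_{x,t}-v_{sx,st}\|^2\le2\|\theta(\lambda(s))-\lambda(s)\|$.
\item So $\eta^x(t)\coloneq\|v_{x,t}\|^2$ is a probability measure supported in $E$ itself, with $\|\eta^{sx}-s\eta^x\|_1\le2\bigl(2\|\theta(\lambda(s))-\lambda(s)\|\bigr)^{1/2}$; passing to $E^k$ is not even needed.
\end{enumerate}
Either way, an explicit construction of this kind has to replace the compactness sketch.
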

\begin{proof}
\textbf{Ad}$(\ref{cond:cb1})\Rightarrow(\ref{cond:cb2})$.
We set $\xi\coloneq\eta^{1/2}$ and 
define $V_t\colon\ell_2\Ga \to \ell_2E$ 
by $V_t\delta_x \coloneq \xi^x(xt^{-1})\delta_{xt^{-1}}$ 
and $\Psi\colon\IB(\ell_2E)\to\IB(\ell_2\Ga)$ by 
$\Psi(S)=\sum_t V_t^*SV_t$. 
Then $\Psi$ is unital and completely positive.
A routine calculation shows that 
$(\Psi\circ\Phi_E)(\lambda(s)) = \lambda(s)f_s$, 
where $f_s(x) \coloneq \ip{ s\xi^x, \xi^{sx}}$ 
acts as a diagonal operator on $\ell_2\Ga$. 
Since 
\[
2\|1-f_s\|_\infty = \sup_x\| \xi^{sx} - s\xi^x \|_2^2
 \le \sup_x \| \eta^{sx} - s\eta^x \|_1,
\]
one has 
\[
\|\id_A\otimes (\Psi\circ\Phi_E|_\cF)
  - \id_{A\otimes\cF}\|
 \le\sum_{s\in F}\| 1 - f_s \|_\infty \le \ve.
\]

\textbf{Ad}$(\ref{cond:cb2})\Rightarrow(\ref{cond:cb1})$. 
We follows the proof of Theorem 5.1.6 in \cite{bo}. 
By Corollary B.11 in \cite{bo} 
and Arveson's extension theorem, 
there is a unital completely positive map 
$\psi\colon\IB(\ell_2 E)\to\IB(\ell_2\Ga)$ 
such that $\theta\coloneq \psi\circ\Phi_E$ satisfies 
$\|\theta|_{\cF} - \id_{\cF} \|_{\cb}\le 4|F|\ve$.
We define the positive definite kernel 
$u$ on $\Ga$ by 
$u(x,y)=\ip{\theta(\lambda(xy^{-1}))\delta_y,\delta_x}$. 
One has $\supp u\subset\{(x,y) : xy^{-1}\in E^2\}$ and 
$| 1 - u(sx,x) | \le 4|F|\ve$ for $s\in F$. 
Hence the positive operator $a_u \in \IB(\ell_2\Ga)$ 
that corresponds to the kernel $u$ has norm 
at most $|E^2|$. 
By approximating $r\mapsto r^{1/2}$ on $[0,|E^2|]$ by 
a polynomial $p$, say of degree $k = k(|E|^2,\ve)$, 
one finds $b \coloneq p(a_u) \in \IB(\ell_2\Ga)_+$ 
such that $\|a_u - b^2 \|<\ve$ and that 
$\ip{b\delta_y,\delta_x}\neq0$ only if 
$xy^{-1} \in E^{2k}$.
Set $\eta^x(t) \coloneq (b\delta_x)(t^{-1}x)^2$. 
Then $\supp\eta^x\subset E^{2k}$,
$\|\eta^x\|_1=\|b\delta_x\|_2^2 \approx_\ve u(x,x)=1$, 
and, for every $s\in F$, 
\[
\|\eta^{sx} - s\eta^x\|_1
 = \| (b\delta_{sx})^2 - (b\delta_x)^2\|_1
 \le \| b\delta_{sx} - b\delta_x\|_2 \| b\delta_{sx} + b\delta_x\|_2
 \le 10(|F|\ve)^{1/2}.
\]
Hence $x\mapsto\eta^x/\|\eta^x\|_1$ satisfies condition (\ref{cond:cb1}) 
for the relevant constants. 
\end{proof}

The above is for Proposition~\ref{propA:norm}, 
and the below is for Theorem~\ref{thmA:unifamen} and Corollary~\ref{corA:list}.

\begin{thm}\label{thm;unfiexmod}
For a group $\Ga$, the following are equivalent.
\begin{enumerate}
\item\label{equiv:unifex} 
The group $\Ga$ is uniformly exact. 
\item\label{equiv:unifexmod}
The group $\Ga$ has modulus of uniform exactness. 
\item\label{equiv:unifexmod2}
There is $k\colon \IN\times(0,1) \to \IN$ with
the following property. 
For every finite subset $F\Subset\Ga$ and $\ve>0$, 
there are a finite subset $E\Subset\Ga$ with $|E|\le k(|F|,\ve)$ 
and a map $\eta\colon\Ga\to\Prob \Ga$ 
that satisfies the displayed condition in Definition~\ref{defn:unifex}.
\item\label{equiv:unifexcst}
For every \cst{} $A$, the canonical embedding 
\[
A^\cU \otimes \rC^*_\rr(\Ga^\cU)
 \hookrightarrow (A \otimes \rC^*_\rr\Ga)^\cU
\]
is continuous and isometric. 
\end{enumerate}
\end{thm}
\begin{proof}
\textbf{Ad}$(\ref{equiv:unifex})\Rightarrow(\ref{equiv:unifexmod})\Rightarrow(\ref{equiv:unifexmod2})\Rightarrow(\ref{equiv:unifex})$. 
We only prove the first implication as the rest are trivial. 
Observe that a compact $\Ga$-space $K$ is amenable 
if and only if for every finite unital symmetric subset $F\Subset\Ga$ 
and $\ve>0$ there are a finite subset $E\Subset\Ga$ and 
a family $(f^t)_{t\in \Ga}$ of non-negative functions 
(via $f^t(x) := \eta^x(t)$ for $\eta\colon K\to\Prob \Ga$ 
in Definition~\ref{def:topoamen}) such that 
\[
f_t=0
\mbox{ for $t\notin E$, }\ 
\sum_{t\in\Ga} f^t =1 
\ \mbox{ and }\ 
\sum_{s\in F}\sum_{t\in \Ga} |sf^t-f^{st}|<\ve
\] 
on $K$. 
We may assume by coset decomposition that 
$E$ is contained in the subgroup generated by $F$ 
and hence $E\subset F^k$ for some $k$.
By this correspondence (and modulo perturbation), 
amenability of $\Ga\acts(\ell_\infty\Ga)^\cU$ means the following: 
For every finite unital symmetric subset $F\Subset \Ga^\cU$ 
and $\ve>0$, there are $k\in\IN$ and 
$\eta_n\colon \Ga\to \Prob \Ga^\cU$ such that 
\[
\bigcup_{n\in\IN,\,x\in\Ga} \supp\eta_n^x \subset F^k
\ \mbox{ and }\ 
\Lim_\cU\sup_{x\in \Ga} \sum_{s\in F}\|\eta_n^{s(n) x} - s\eta_n^x\|_1 \le \ve, 
\]
where $s=[s(n)]_n$ for $s\in F$. 
Via lifting $F^{k+1}$ to $\prod_\IN\Ga$, this is interpreted as follows. 
For every family of finite subsets $F_n\Subset\Ga$ 
of size $d$ and every $\ve>0$, 
there are $k$ and $\eta_n\colon \Ga\to\Prob \Ga$ 
such that 
\[
\bigcup_{x\in\Ga} \supp\eta_n^x\subset F_n^k
\ \mbox{ and }\ 
\Lim_\cU \sup_{x\in \Ga}\sum_{s\in F_n}\|\eta_n^{s x} -s\eta_n^x\|_1 < \ve.
\]
This proves (\ref{equiv:unifexmod}) by contrapositive. 

\textbf{Ad}$(\ref{equiv:unifex})\Rightarrow(\ref{equiv:unifexcst}).$ 
Suppose $\Ga\in\UE$ and let $A\subset\IB(\cH)$ be given.  
Then the \cst{} $(A\otimes \rC^*_{\rr}\Ga)^\cU$ 
is faithfully represented on the ultrapower Hilbert space
$(\cH\otimes\ell_2\Ga)^\cU$. 
Since $(\ell_\infty\Ga)^\cU$ is nuclear, 
the representation of 
$A^\cU\otimes(\ell_\infty\Ga)^\cU$ on 
the same Hilbert space $(\cH\otimes\ell_2\Ga)^\cU$ 
is continuous. 
By Theorem~ 4.4.3 in \cite{bo} 
or \cite{anantharaman-delaroche,adr}, 
amenability of the action $\Ga^\cU\acts(\ell_\infty\Ga)^\cU$ 
implies continuity of the representation
$(A^\cU\otimes(\ell_\infty\Ga)^\cU)\rtimes_{\rr}\Ga^\cU 
\hookrightarrow \IB((\cH\otimes\ell_2\Ga)^\cU)$. 
As $A^\cU \otimes \rC^*_{\rr}(\Ga^\cU)$ is 
canonically contained in the former, 
this proves (\ref{equiv:unifexcst}). 

\textbf{Ad}$(\ref{equiv:unifexcst})\Rightarrow(\ref{equiv:unifexmod}).$ 
Suppose that $(\ref{equiv:unifexcst})$ holds, 
but $(\ref{equiv:unifexmod})$ does not. 
Because $(\ref{equiv:unifexmod})$ does not hold, 
by Lemma~\ref{lem:osex}, 
there are $d\in\IN$, $\ve>0$, and a sequence 
$F_n\Subset\Ga$ with $|F_n|=d$ such that 
for every $n$, $E_n \coloneq F_n^n$, and 
$\cF_n\coloneq\lspan\lambda(F_n)\subset \rC^*_\rr\Ga$, 
one has $\|(\Phi_{E_n}|_{\cF_n})^{-1}\|_{\cb}>1+\ve$. 
Set $\cG_n \coloneq \Phi_{E_n}(\cF_n) \subset \IB(\ell_2E_n)$. 
By operator space duality, the map 
$(\Phi_{E_n}|_{\cF_n})^{-1}\colon\cG_n\to\cF_n$ 
corresponds to an element $u_n\in \cG_n^*\otimes\cF_n$ 
with $\|u_n\| = \|(\Phi_{E_n}|_{\cF_n})^{-1}\|_{\cb}$. 
We write $F_\cU\subset\Ga^\cU$ for the subset of $d$ elements 
arising from $(F_n)_n$ and 
$\cF_\cU \coloneq \lspan\lambda(F_\cU) 
 \subset \rC^*_\rr(\Ga^\cU)$.
NB: Because of (\ref{equiv:unifexcst}), $\cF_\cU$ is actually 
completely isometrically isomorphic to the ultraproduct operator 
space of $(F_n)_n$. 
See Section 2.8 in \cite{pisier} for ultraproduct operator spaces. 
We write $\cG_\cU$ for the ultraproduct operator spaces 
of $(\cG_n)_n$. 
For the ultraproduct map
$\Phi_\cU\coloneq [\Phi_{E_n}|_{\cF_n}]_n$ from $\cF_\cU$ 
into $\cG_\cU$, the inverse $\Phi_\cU^{-1}$ corresponds 
to $u \in (\cG_\cU)^*\otimes \cF_\cU$ with 
$\|u\| = \| \Phi_\cU^{-1}\|_{\cb}$. 
We write $\tau$ for the canonical tracial state on $\rC^*_\rr(\Ga^\cU)$. 
Then for every $m$ and $S = [S_n]_n \in \IM_m\otimes\cF_\cU$, 
one has 
\begin{align*}
\| S \| &= \lim_k (\mathrm{tr}\otimes\tau)((S^*S)^k)^{1/2k}\\
 &= \lim_k \|(\id\otimes\Phi_{F_\cU^k})(S) \| 
  = \lim_k\Lim_\cU \|(\id\otimes\Phi_{F_n^k})(S_n) \| \\
 &\le \Lim_\cU \|(\id\otimes\Phi_{E_n})(S_n) \| 
  = \|(\id\otimes\Phi_\cU)(S) \|. 
\end{align*}
It follows that $\Phi_\cU$ is a complete isometry and 
$\|u\|=1$. 
With the canonical complete isometry 
$[\cG_n^*]_n = (\cG_\cU)^*$  
(Lemma 2.8.1 in \cite{pisier}) in mind, 
we consider a \cst{} $A$ containing 
copies of the operator spaces $\cG_n^*$. 
Then condition (\ref{equiv:unifexcst}) implies
\[
\Lim_\cU\| (\Phi_{E_n}|_{\cF_n})^{-1}\|_{\cb} 
 = \Lim_\cU \| u_n \|_{\cG_n^*\otimes\cF_n}
 = \| u \| = 1.
\]
This is in a contradiction with the hypothesis.
\end{proof}

It only remains to show Corollary~\ref{corA:list}.(\ref{cond:extension}). 
The permanence property for 
free products and graph products follows from that of extensions. 
Indeed, 
the group $\Ga_1 \coloneq \Ga*\IZ$ 
is an extension of the free group 
$\bigast_{\Ga_1/\IZ} \IZ$ by $\Ga$. 
The group $\Ga_1$ contains a copy of the infinite free product $\bigast_\IN\Ga$.
Also for a pair $\Delta\le\Ga$ and $\Ga'$, the group 
$\Ga^* \coloneq \Ga*_\Delta(\Delta\times\Ga')$ 
is an extension of 
$\bigast_{\Ga^*/(\Delta\times\Ga')} \Ga'$ by $\Ga$. 
Thus $\Ga^*$ is uniformly exact if $\Ga$ and $\Ga'$ are. 
The assertion for the graph products follows by induction 
on the size of graphs (see, e.g., Proof of Corollary 1.3 in \cite{gkmp}). 
See also \cite{lm} for related works. 

One way to prove the permanence property for extensions 
is to use the fact that $\Ga$ is uniformly exact if and only if 
for every \emph{twisted action} $(\alpha,\sigma)\colon\Ga\acts A$, 
the \emph{reduced twisted crossed product} satisfies 
\[
A^\cU \rtimes^{\sigma^\cU}_\rr \Ga^\cU
 \hookrightarrow (A \rtimes^\sigma_\rr \Ga)^\cU.
\]
This fact can be proved in the same way as 
Theorem \ref{thm;unfiexmod}.(\ref{equiv:unifexcst}).
For a normal subgroup $\Delta$ of $\Ga$, 
the reduced twisted crossed product 
$A\rtimes^\sigma_\rr \Ga$ has a decomposition 
into iterated reduced twisted crossed product by 
$\Delta$ and then by $\Ga/\Delta$, 
which is canonical after fixing a lift $\Ga/\Delta\to\Ga$. 
See \cite{bedos} for a detail. 
Instead, we prove here the permanence property 
more directly and \emph{effectively} by following the proof 
of Proposition 5.1.11 in \cite{bo}. 
\begin{proof}
Let $\Ga$ be a group with a normal subgroup $\Delta$ 
such that $\bar{\Ga}\coloneq\Ga/\Delta,\Delta\in\UE(k)$. 
Let a finite unital symmetric subset $F\Subset\Ga$ 
be given. 
We write $\bar{F}$ for the image of $F$ in $\bar{\Ga}$. 
Since $\bar{\Ga}\in\UE(k)$, 
for $k \coloneq k(|F|)$ and $\ve \coloneq |F|^{-1}$, 
there is $\eta\colon \bar{\Ga} \to \Prob \bar{\Ga}$ 
such that 
\[
\bigcup_{p \in \bar{\Ga}} \supp \eta^p\subset \bar{F}^k
\ \mbox{ and }\ 
\max_{s\in\bar{F}}\sup_{p\in\bar{\Ga}} 
 \| \eta^{sp} - s\eta^p \|_1\le\ve.
\]
Take a lift $\varsigma\colon\bar{\Ga}\to\Ga$ 
that satisfies $\varsigma(\bar{F}^l) \subset F^l$ for every $l$ 
and set the cocycle 
$\alpha\colon \Ga\times\bar{\Ga} \to \Delta$ by 
$\alpha(s,p)\coloneq\varsigma(\bar{s}p)^{-1}s\varsigma(p)$.
One has $\alpha(F,\bar{F}^k)\subset F^{2(k+1)}\cap\Delta\eqcolon F'$. 
Since $\Delta\in\UE(k)$, 
for $k' \coloneq k(|F'|)$ and $\ve'\coloneq|F'|^{-1}$, 
there is 
$\xi\colon \Delta\to\Prob\Delta$ such that 
\[
\bigcup_{x \in \Delta} \supp \xi^x\subset (F')^{k'}
\ \mbox{ and }\ 
\max_{s\in F'}\sup_{x\in\Ga} 
 \| \xi^{sx} - s\eta^x \|_1\le\ve'.
\]
Define $\zeta\colon \Ga\to\Prob\Ga$ by 
$\zeta^x\coloneq \sum_{p\in\bar{\Ga}}\eta^{\bar{x}}(p)\varsigma(p)\xi^{\varsigma(p)^{-1}x}$.
One has
$\supp\zeta^x\subset F^{k+2(k+1)k'}$ for all $x\in\Ga$ and,  
for $s\in F$ and $x\in\Ga$, 
\begin{align*}
\zeta^{s^{-1}x}
 &= \sum_{p\in\bar{\Ga}}\eta^{\bar{s}^{-1}\bar{x}}(p)
   \varsigma(p)\xi^{\alpha(s,p)^{-1}\varsigma(\bar{s}p)^{-1}x}\\
 &\approx_{\ve'} \sum_{p\in\bar{\Ga}}\eta^{\bar{s}^{-1}\bar{x}}(p)
   s^{-1}\varsigma(\bar{s}p)\xi^{\varsigma(\bar{s}p)^{-1}x}\\
 &= \sum_{p\in\bar{\Ga}}(\bar{s}\eta^{\bar{s}^{-1}\bar{x}})(p)
   s^{-1}\varsigma(p)\xi^{\varsigma(p)^{-1}x}
  \approx_\ve s^{-1}\zeta^x.
\end{align*}
This proves $\Ga\in\UE(k_1)$ for some $k_1$ that is computable from $k$. 
\end{proof}

\end{document}